\newtheorem{theorem}{Theorem}[section]
\newtheorem{proposition}[theorem]{Proposition}
\newtheorem{corollary}[theorem]{Corollary}
\newtheorem{lemma}[theorem]{Lemma}
\newtheorem{claim}[theorem]{Claim}
\theoremstyle{definition}
\newtheorem{definition}[theorem]{Definition}
\newtheorem{remark}[theorem]{Remark}
\newtheorem*{OTP*}{Organization of this paper}
\newtheorem*{Ack*}{Acknowledgements}
\begin{document}


\title[]
{An $\varepsilon$-regularity theorem for line bundle mean curvature flow}


\author{Xiaoli Han}
\address{Department of Mathematical Sciences, Tsinghua University, Beijing 100084, P. R. of China}
\email{hanxiaoli@mail.tsinghua.edu.cn}

\author{Hikaru Yamamoto}
\address{Department of Mathematics, Faculty of Science, Tokyo University of Science, 1-3 Kagurazaka, Shinjuku-ku, Tokyo 162-8601, Japan}
\email{hyamamoto@rs.tus.ac.jp}


\begin{abstract}
In this paper, we study the line bundle mean curvature flow defined by Jacob and Yau \cite{JacobYau}. 
The line bundle mean curvature flow is a kind of parabolic flows to obtain deformed Hermitian Yang-Mills metrics on 
a given K\"ahler manifold. The goal of this paper is to give an $\varepsilon$-regularity theorem for the line bundle mean curvature flow. 
To establish the theorem, we provide a scale invariant monotone quantity. 
As a critical point of this quantity, we define self-shrinker solution of the line bundle mean curvature flow. 
The Liouville type theorem for self-shrinkers is also given. 
It plays an important role in the proof of the $\varepsilon$-regularity theorem. 
\end{abstract} 


\keywords{deformed Hermitian Yang-Mills metric, mean curvature ,line bundle}


\subjclass[2010]{53D37, 53C38}


\thanks{
The first author was supported by NSFC, No.11471014. 
The second author was supported by JSPS KAKENHI Grant Number 16H07229 and Osaka City University Advanced
Mathematical Institute (MEXT Joint Usage/Research Center on Mathematics
and Theoretical Physics)
}


\maketitle

\section{Introduction}\label{sec:intro}
An $\varepsilon$-regularity theorem ensures the boundedness of derivatives of a solution of some PDE under the assumption that a quantity, usually defined by the integral of the solution, is $\varepsilon$-close to the regular value. 
In this paper, we give an $\varepsilon$-regularity theorem for line bundle mean curvature flows. 
This is motivated by the $\varepsilon$-regularity theorem for mean curvature flows due to White \cite{White}. 
Recently, the line bundle mean curvature flows were defined by Jacob and Yau \cite{JacobYau} 
to acquire deformed Hermitian Yang--mills metrics. 
We will describe the background of these objects later. 
First, we focus on the introduction of the main result. 

\subsection{Basic notions}
Let $(X,g)$ be a K\"ahler manifold with $\dim_{\mathbb{C}}X=n$ and associated K\"ahler form $\omega$. 
We fix a holomorphic line bundle $L\to X$. 
When a Hermitian metric $h$ of $L$ is given, we define a function $\zeta:X\to \mathbb{C}$ by 
$\zeta:=(\omega-F(h))^{n}/\omega^{n}$, where $F(h):=(-1/2)\partial\bar{\partial}\log h$, 
the curvature 2-form of the Chern connection associated with $h$. 
Note that $F(h)$ is pure imaginary valued. 
Then, we define the {\em Hermitian angle} of $h$ by $\theta:=\arg\zeta$ and 
one can see that $\theta$ is lifted as an $\mathbb{R}$-valued function 
rather than $\mathbb{R}/2\pi\mathbb{Z}$-valued in Section \ref{scalinginvsec}. 

Assume that a smooth 1-parameter family of Hermitian metrics $h_{t}$ of $L$ is given for $t\in[0,T)$. 
Define $u(\,\cdot\,,t):X\to\mathbb{R}$ by $h_{t}=e^{-u(t)}h_{0}$. 
Then, it holds that $u(\,\cdot\,,0)\equiv 0$. 

\begin{definition}[\cite{JacobYau}]\label{dfoflbmcf}
$h=\{\,h_{t}\,\}_{t\in[0,T)}$ is called a line bundle mean curvature flow of $L\to X$ with respect to $\omega$ 
if there exists a constant $\hat{\theta}\in\mathbb{R}$ such that 
\begin{equation}\label{defeqoflbmcf}
    \frac{d}{dt}u=\theta-\hat{\theta}, 
\end{equation}
where $\theta$ is the Hermitian angle of $h_{t}$ at each time $t$. 
We call $h_{0}$ the initial metric. 
\end{definition}

The constant $\hat{\theta}$ in \eqref{defeqoflbmcf} 
should be chosen appropriately 
to see \eqref{defeqoflbmcf} as a potential way to get 
a deformed Hermitian metric on $L$ as a limit of the flow. 
Actually, in the paper of Jacob and Yau \cite{JacobYau}, 
the constant $\hat{\theta}$ is specified to satisfy 
$\mathop{\mathrm{Im}}(e^{-\sqrt{-1}\hat{\theta}} Z_{L})=0$, 
where $Z_{L}\in\mathbb{C}$ is defined in Section \ref{scalinginvsec}. 
However, we use \eqref{defeqoflbmcf} just as a PDE in this paper. 
Hence, any constant $\hat{\theta}\in\mathbb{R}$ is allowed. 

\subsection{Key assumptions}
To prove the main theorem (the $\varepsilon$-regularity theorem) we need to assume two things: 
one is for the ambient $(X,g)$ and the other is for the flow $\{h_{t}\}_{t\in[0,T)}$. 
These assumptions seem unnatural and strong at first glance. 
To explain why such condition is supposed, we should back to the work of Leung, Yau and Zaslow \cite{LeungYauZaslow} 
and we postpone it until Section \ref{Background}. 
Thus, in this subsection, we restrict ourselves to the introduction of those assumptions. 

\begin{definition}\label{graphical}
Fix an open set  $U\subset X$. We say that $(X,g)$ is {\it semi-flat} on $U$ 
if the following properties are satisfied: 
\begin{itemize}
\item[(i)] There exists a diffeomorphism $\varphi:B(r)\times B(r')\to U$, 
where $B(r)$ is an open ball in $\mathbb{R}^n$ centered at 
the origin with radius $r$. 
We will use real coordinates $(x^{1},\dots,x^{n})$ on $B(r)$ and $(y^{1},\dots,y^{n})$ on $B(r')$.  
\item[(ii)] Complex coordinates on $B(r)\times B(r')$ defined by $z^{i}:=x^{i}+\sqrt{-1}y^{i}$ match the original holomorphic structure on $U$. 
This implies that $\varphi$ is biholomorphic. 
\item[(iii)] Under these coordinates $(U,(z^{1},\dots,z^{n}))$, the coefficients of the K\"ahler form 
$\omega=(\sqrt{-1}/2)g_{\bar{k}j}dz^{j}\wedge d\bar{z}^{k}$ satisfy, for all $i,j,k\in \{\,1,\dots,n\,\}$, 
\begin{equation}\label{lsfc}
g_{\bar{i}j}=g_{\bar{j}i}
\quad\mbox{and}\quad
\frac{\partial}{\partial y^{k}}g_{\bar{i}j}=0. 
\end{equation}
\end{itemize}
\end{definition}

\begin{definition}\label{graphical2}
Assume that $(X,g)$ is semi-flat on $U$ and coordinates $(z^{1},\dots,z^{n})$ on $U$ 
is induced by $\varphi:B(r)\times B(r')\to U$. 
We further assume that there exists a nonvanishing holomorphic section $e\in \Gamma(U,L)$. 
Then, we say that a pair of a holomorphic line bundle $L\to X$ 
and a Hermitian metric $h$ of $L$ is {\it graphical} on $U$ with respect to $e\in \Gamma(U,L)$ 
if for all $k\in \{\,1,\dots,n\,\}$ 
\begin{equation}\label{lsfc2}
\frac{\partial}{\partial y^{k}}\log h(\bar{e},e)=0. 
\end{equation}
\end{definition}

\subsection{The main theorem}\label{subsecmain}
Let $U\subset X$ be an open set and $U^{c}$ denotes its complement. 
Put $V:=U\times[a,b)$ for some $a,b\in\mathbb{R}$. 
Then, for a space-time point $Q:=(p,t)\in V$, we define the parabolic distance from $Q$ to 
the boundary of $V$ by 
\begin{equation}\label{distance}
    \mathrm{dist}_{g}(Q,V):=\min\left\{\,\inf_{q\in U^{c}}d_{g}(p,q),\sqrt{b-t},\sqrt{t-a}\,\right\}. 
\end{equation}
Now, we can state our main theorem ($\varepsilon$-regularity theorem) except 
for precise definitions of two important quantities: $\bar{\Theta}$ and $K_{3,\alpha}$.  

\begin{theorem}\label{epregthm}
Fix a K\"ahler manifold $(X,g)$, a bounded open set $U'\subset X$, $\alpha\in(0,1)$ and $A>0$. 
Assume that $(X,g)$ is semi-flat on $U'$ with respect to $\varphi:B(4r)\times B(r')\to U'$. 
Then, there exist $\varepsilon, C>0$ with the following property. 
Suppose $L\to X$ is a holomorphic line bundle, 
$h=\{\,h_{t}\,\}_{t\in [0,T)}$ is a line bundle mean curvature flow of $L$ with $T<\infty$ 
and $e\in\Gamma(U',L)$ is a nonvanishing holomorphic section 
so that $h_{t}$ is graphical on $U'$ for all $t\in[0,T)$ 
with respect to $e\in\Gamma(U',L)$. 
Put $U:=\varphi(B(r)\times B(r'))$ and $V:=U\times [0,T)$. 
Assume that $\sup_{V}|F(h(t))|\leq A$ and 
\[\bar{\Theta}(h,Q,t)\leq 1+ \varepsilon\]
for all $ Q=(p,T')\in U\times (0,T)$ and $t\in(T'-(\mathrm{dist}_{g}(Q,V))^2,T')\cap(0,T)$. 
Then, 
\[K_{3,\alpha;V}(g,\phi)\leq C, \]
where $\phi:=-\log h(\bar{e},e)$. 
\end{theorem}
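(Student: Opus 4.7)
The plan is to adapt the blow-up strategy that White developed for mean curvature flow to this parabolic PDE on K\"ahler manifolds. I would argue by contradiction: assuming no such $\varepsilon, C$ exist, there would be sequences $\varepsilon_k \downarrow 0$, line bundles $L_k \to X$, line bundle mean curvature flows $h^k = \{h^k_t\}_{t\in[0,T_k)}$ and nonvanishing holomorphic sections $e_k \in \Gamma(U', L_k)$ satisfying all hypotheses of the theorem with $\varepsilon = \varepsilon_k$, while $K_{3,\alpha;V_k}(g, \phi_k) \to \infty$ for $\phi_k := -\log h^k(\bar e_k, e_k)$. A point-picking lemma in the spirit of Hamilton would then produce space-time points $Q_k = (p_k, t_k)$ and scales $r_k > 0$ at which the $C^{3,\alpha}$ norm essentially concentrates and $r_k$ reflects both the derivative size and the parabolic distance to $\partial V_k$.

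I would then perform a parabolic rescaling centered at $Q_k$ by factor $1/r_k$. Since $(X,g)$ is semi-flat on $U'$, the rescaled ambient metrics converge smoothly on compact sets to the flat K\"ahler metric on $\mathbb{C}^n$; the bound $|F(h^k)| \leq A$ rescales to zero, and the graphical condition \eqref{lsfc2} is preserved under the rescaling. The resulting flows $\tilde h^k$ live on balls exhausting $\mathbb{C}^n$ and on time intervals exhausting $(-\infty, 0]$, normalized so that $K_{3,\alpha}$ equals $1$ at the base point. A standard parabolic compactness argument then extracts a smooth subsequential limit $\tilde h^\infty$, which is an ancient graphical line bundle mean curvature flow on the trivial bundle over $\mathbb{C}^n$ with the flat metric.

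The decisive input is the scale invariant monotone quantity $\bar{\Theta}$ constructed earlier in the paper. Because $\bar{\Theta}$ is scale invariant, the pointwise upper bound $\bar{\Theta}(h^k, Q, t) \leq 1 + \varepsilon_k$ on $V_k$ transfers in the limit to $\bar{\Theta}(\tilde h^\infty, \cdot, \cdot) \leq 1$; combined with the lower bound coming from the heat-kernel normalization at the center, the monotonicity formula forces $\bar{\Theta} \equiv 1$ on the limit space-time. The equality case of the monotonicity formula then characterizes $\tilde h^\infty$ as a self-shrinker in the sense defined in the paper. Applying the Liouville type theorem for self-shrinkers (also announced in the abstract) forces $\tilde h^\infty$ to be the trivial flat solution, whose $C^{3,\alpha}$ semi-norm vanishes identically, contradicting the normalization at the base point.

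The main obstacle will be the compactness step and its interaction with the semi-flat and graphical structures. One must show that the blow-up scale $r_k$, combined with the zeroth-order curvature bound and the graphical hypothesis, furnishes enough uniform higher-derivative estimates on expanding parabolic cylinders to extract a smooth limit; this requires Schauder-type parabolic estimates tailored to \eqref{defeqoflbmcf}, followed by bootstrapping from a local $C^{3,\alpha}$ bound to full smoothness on compact subsets of the limit space-time. A second delicate point is verifying rigorously that the equality case of the monotonicity formula indeed characterizes self-shrinkers in this setting, so that the Liouville theorem is applicable to the particular class of ancient graphical solutions produced by the blow-up procedure.
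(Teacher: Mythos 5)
Your proposal is correct and follows essentially the same route as the paper: contradiction, point-picking, parabolic rescaling normalized by the $K_{3,\alpha}$-quantity, smooth subconvergence to an ancient graphical limit, scale invariance plus monotonicity of $\bar{\Theta}$ to force the equality case (hence the shrinker equation $\mathcal{H}=\mathcal{P}^{\bot}/2s$), and the Liouville theorem to conclude the limit is a time-independent quadratic with vanishing seminorm, contradicting the normalization. One small correction: $|F|$ is scale invariant under the parabolic rescaling $(g,h)\mapsto(kg,h^{\otimes k})$, so the curvature bound is preserved rather than rescaling to zero, which is exactly what is needed to add the missing $|\nabla^{2}\phi|$ control in the compactness step.
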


The precise definitions of $\bar{\Theta}(h,Q,t)$ and $K_{3,\alpha;V}(g,\phi)$ are complicated. 
So, we refrain from describing these in this subsection. 
Here, we just put some remarks on these quantities. 
First, $\bar{\Theta}(h,Q,t)$ is called the {\it Gaussian density} of $h=\{\,h_{t}\,\}_{t\in [0,T)}$ 
at $Q=(p,T')$ with scale $t$ and 
defined in Definition \ref{gaussdensity}. 
This is an analogue of the Gaussian density for mean curvature flows 
introduced by Stone \cite{Stone}. 
Next, $K_{3,\alpha;V}(g,\phi)$ are basically defined by $|\partial_{t}\phi|_{C^{0}}$, 
$|\partial_{t}\nabla\phi|_{C^{0,\alpha}}$ and $|\nabla^3\phi|_{C^{0,\alpha}}$. 
Roughly speaking, we first define $K_{3,\alpha}((g,\phi),Q)$ by these three seminorms, following White \cite{White}, and next define $K_{3,\alpha;V}(g,\phi)$ by the supremum of 
the product of $K_{3,\alpha}((g,\phi),Q)$ and $\mathrm{dist}_{g}(Q,V)$ for $Q\in V$. 
Those are explained in Section \ref{proofofmainthm}. 

\subsection{The strategy of the proof}\label{subsec1.4}
Without precise definitions and proofs of facts, 
we explain how Theorem \ref{epregthm} will be proved. 
This instant proof sheds light on three keys we will give in the following sections. 
Let us denote by $\mathcal{A}$ the set of all triplets $a=((X,\omega),L,h)$, 
where $(X,\omega)$ is a K\"ahler manifold, $L$ is a holomorphic line bundle over $X$ and 
$h=\{\,h_{t}\,\}_{t\in [0,T_{\mathrm{max}})}$ is a line bundle mean curvature flow of $L$. 
In this subsection, we write $\bar{\Theta}(a,Q,t)$ and $K_{3,\alpha;V}(a)$ 
instead of $\bar{\Theta}(h,Q,t)$ and $K_{3,\alpha;V}(g,\phi)$, respectively.  

\begin{itemize}
\item[(i)] The first key is the scaling invariance of line bundle mean curvature flows. 
We define a parabolic scaling operator $D_{k}^{T}:\mathcal{A}\to \mathcal{A}$ for 
$T\in\mathbb{R}$ and $k\in\mathbb{N}$ in Section \ref{scalinginvsec}. 
Roughly, it is given by $D_{k}^{T}(a):=((X,k \omega),L^{\otimes k},h^{\otimes k})$ and 
we have to change the scale of time $t$ precisely.  

\item[(ii)] The second key is the Gaussian density $\bar{\Theta}\geq 0$ and its properties: 
scaling invariance and monotonicity. 
The former means $\bar{\Theta}(D_{k}^{T}(a),Q,t)=\bar{\Theta}(a,Q',t')$, where $Q':=(p,0)$ and $t':=T+t/k$. 
The latter means $\partial_{t}\bar{\Theta}(a,Q,t)\leq -B(h)+C$ for $a=((X,\omega),L,h)\in\mathcal{A}$, 
where $B(h)\geq 0$ is defined by $h$ and $C\geq  0$ is a constant. 
If $(X,\omega)$ is $\mathbb{R}^{n}\times B(r')$ with the standard metric, then $C=0$. 
This implies that $\bar{\Theta}(a,Q,t)+C(T'-t)\geq 0$ is monotonically decreasing for $t$ and 
has the limit as $t\to T'$. 
It is also important that the limit of $\lim_{t\to T'}\bar{\Theta}(a,Q,t)\geq 1$ when $T'$ 
of the chosen $Q=(p,T')$ is 
strictly less than $T_{\mathrm{max}}$. 
These are discussed in Section \ref{sec:monoform}. 

\item[(iii)] The third key is the Liouville type theorem for self-shrinkers. 
Roughly speaking, an ancient solution $h=\{\,h_{t}\,\}_{t\in (-\infty,T_{\mathrm{max}})}$ of the line bundle mean curvature flow satisfying $B(h)=0$ is called a {\em self-shrinker}. 
Then, we can prove that if $T_{\mathrm{max}}=\infty$ for a graphical self-shrinker 
then $\phi:=-\log h_{t}$ should be of the form 
$a_{ij}x^{i}x^{j}+b$ for some constants $a_{ij},b\in\mathbb{R}$. 
Then, one may agree that when $\phi=a_{ij}x^{i}x^{j}+b$ then $K_{3,\alpha}(a,Q)=0$ 
since we mentioned that it is defined by $|\partial_{t}\phi|_{C^{0}}$, 
$|\partial_{t}\nabla\phi|_{C^{0,\alpha}}$ and $|\nabla^3\phi|_{C^{0,\alpha}}$ 
though we have not given its precise definition. 
\end{itemize}

Then, the proof of Theorem \ref{epregthm} will be done with these keys as follows. 

\begin{proof}[Sketch of the proof of Theorem \ref{epregthm}]
We do proof by contradiction. So, assume that there exist sequences $C_{i}\to \infty$, 
$\varepsilon_{i}\to 0$ and 
line bundle mean curvature flows $h_{i}$ of $L_{i}$ over $(X,\omega)$ (we put 
$a_{i}:=((X,\omega),L_{i},h_{i})$) such that 
\begin{equation}\label{shortcont}
\bar{\Theta}(a_{i},Q,t)\leq 1+\varepsilon_{i}\quad\mbox{and}\quad K_{3,\alpha;V_{i}}(a_{i})\geq C_{i},
\end{equation}
where we omitted the ranges of $Q$ and $t$. 
We also assume that each $a_{i}$ satisfies all additional assumptions in Theorem \ref{epregthm}. 
Then, one can prove that $K_{3,\alpha}(a_{i},\,\cdot\,)\to\infty$ uniformly. 
Then, by choosing $k_{i}$ precisely, we can normalize these so that 
\begin{equation}\label{norm1}
K_{3,\alpha}(D_{k_{i}}^{T_{i}}(a_{i}),Q_{i})=1
\end{equation}
at some point $Q_{i}$ since $K_{3,\alpha}$ performs in inverse proportion for the scaling. 

On the other hand, since the density is scaling invariant, 
we have 
\[\bar{\Theta}(D_{k_{i}}^{T_{i}}(a_{i}),Q,t)=\bar{\Theta}(a_{i},Q',t')\] 
and the right hand side tends to $1$ by \eqref{shortcont}. 
Moreover, we can prove that $D_{k_{i}}^{T_{i}}(a_{i})$ converges to $a_{\infty}\in\mathcal{A}$ in some sense, 
where $a_{\infty}=((X_{\infty},\omega_{\mathrm{st}}),\underline{\mathbb{C}},h_{\infty})$ 
with $X_{\infty}:=\mathbb{R}^{n}\times B(r')$ 
and $h_{\infty}=\{\,h_{\infty,t}\,\}_{t\in\mathbb{R}}$. 
Then, by the second key with $C=0$, we see that $\bar{\Theta}(a_{\infty},Q_{\infty},t)\geq 1$. 
Letting $i\to\infty$ in \eqref{shortcont}, we know that $\bar{\Theta}(a_{\infty},Q_{\infty},t)\leq 1$. 
Thus, we see that $\bar{\Theta}(a_{\infty},Q_{\infty},t) \equiv 1$, 
so $\partial_{t}\bar{\Theta}(a_{\infty},Q_{\infty},t)\equiv 0$. 
This together with the second key and $C=0$ implies that $B(h_{\infty})=0$, 
that is, $h_{\infty}$ is a self-shrinker. 

Now, $h_{\infty}$ is a self-shrinker defined for all time. 
Thus, by the third key (the Liouville type theorem for self-shrinkers) we can say that 
\[K_{3,\alpha}(a_{\infty},Q_{\infty})=0. \]
But, this contradicts to the normalization \eqref{norm1} with $D_{k_{i}}^{T_{i}}(a_{i})\to a_{\infty}$. 
\end{proof}

\subsection{Organization of this paper} 
Section \ref{sec:intro} is the shortest path to the main theorem of this paper 
and gives the sketch of the proof of the main theorem. 
Section \ref{Background} gives the background of the present work which is related to mirror symmetry. 
Section \ref{scalinginvsec} gives the basic notations and the scaling invariance 
of the line bundle mean curvature flow PDE. 
Section \ref{sec:divergence} is devoted to build the divergence theorem 
for a Hermitian metric as an analog of it for a submanifold. 
In Section \ref{sec:monoform}, we provide the monotonicity formula for line bundle mean curvature flows, 
define the Gaussian density and prove important properties of it. 
In Section \ref{sec:self-shrinker}, we define a self-shrinker for the line bundle mean curvature flow PDE 
and prove the Liouville type theorem for it. 
In Section \ref{proofofmainthm}, we give the proof of the main theorem 
after the definition of $K_{3,\alpha}$-quantity. 
\subsection*{Acknowledgments}
The first author would like to thank Professor S.-T. Yau for inviting her to visit Harvard University where the research studied.
The second author would like to thank Professor A. Futaki for introducing him some previous results 
relating to this paper and for private communication. 

\section{Background}\label{Background}
In this section, we provide the background of the present work. 
We review the importance of deformed Hermitian Yang-Mills metrics and 
line bundle mean curvature flows along the history of mirror symmetry. 
By going back to the origin of deformed Hermitian Yang-Mills metrics, 
one can see that the semi-flat condition (Definition \ref{graphical}) and graphical condition (Definition \ref{graphical2}) are naturally satisfied 
in important cases. 

\subsection{Short history of mirror symmetry}
There is no room for doubt that mirror symmetry is not only important for physicists but also mathematicians. 
From the proposal by Kontsevich \cite{Kontsevich}, the so-called homological mirror symmetry, it is widely recognized as an equivalence of a triangulated category between the bounded derived category of coherent sheaves on $X$, denoted by $\mathop{D^{b}Coh}(X)$, 
and the one of Fukaya category, denoted by $\mathop{D^{b}Fuk}(Y)$ for mirror Calabi-Yau manifolds $X$ and $Y$. 
Roughly speaking $\mathop{D^{b}Coh}(X)$ is determined by the complex structure of $X$ and $\mathop{D^{b}Fuk}(Y)$ is by the symplectic structure of $Y$. In superstring theories, this is regarded as T-duality between type IIA string theory (related to complex geometry) and type IIB (related to symplectic geometry).

Although the homological mirror symmetry tells us what should happen when a mirror Calabi-Yau pair is given, 
it does not provide a way to construct such a mirror pair. 
Amid such circumstances, Strominger, Yau and Zaslow \cite{StromingerYauZaslow} proposed a way to create mirror Calabi-Yau partners, 
now it is called the SYZ conjecture. 
Simply speaking, they proposed that a mirror partner should be obtained by the real Fourier-Mukai transform 
when one side is the total space of a special Lagrangian torus fibration over some base manifold $B$. 
Since the SYZ conjecture, special Lagrangian submanifolds have acquired much attention. 
We remark that special Lagrangian submanifolds had been originally defined by Harvey and Lawson \cite{HarveyLawson} before the SYZ conjecture. 

The real Fourier-Mukai transform is not only a tool to construct a mirror partner 
but also a map which sends D-branes in one side to the other side. 
This is explained by Mari\~no, Minasian, Moore and Strominger \cite{MarinoMinasianMooreStrominger} from the physical side and 
by Leung, Yau and Zaslow \cite{LeungYauZaslow} from the mathematical side. 
Their consequence is that the corresponding objects to special Lagrangian submanifolds in the type IIB side are 
deformed Hermitian Yang-Mills connections in the type IIA side. 

To be precise, let $\theta\in\mathbb{R}$ be a constant, $(X,g)$ a K\"ahler manifold with $\dim_{\mathbb{C}}X=n$ and associated K\"ahler form $\omega$ and $L\to X$ a complex line bundle with a Hermitian metric $h$. 

\begin{definition}\label{dHYMconnection}
A deformed Hermitian Yang-Mills connection with phase $e^{\sqrt{-1}\theta}$ is a Hermitian connection $\nabla$ of $(L,h)$ so that 
its curvature 2-from $F$ satisfies 
\[F^{0,2}=0\quad\mbox{and}\quad \mathrm{Im}\left(e^{-\sqrt{-1}\theta}\bigl(\omega+F\bigr)^{n}\right)=0. \]
\end{definition}

It is well-known that the first condition, $F^{0,2}=0$, is equivalent to that the existence of a holomorphic structure so that 
the Chern connection associated to $h$ is $\nabla$, that is, the integrability condition. 
The second condition is nonlinear in general, however it is rewritten as $\omega\wedge F=0$ when $\dim_{\mathbb{C}}X=2$ and $\theta=0$, 
and this is just the Hermitian Yang-Mills equation. 
After a blank period of about fifteen years from \cite{LeungYauZaslow}, 
the study of dHYM has been developed recently, see \cite{CollinsJacobYau, CollinsYau, CollinsXieYau, Pingali} and references therein. 

\subsection{Introduction to the work of Leung-Yau-Zaslow}
In our main theorem (Theorem \ref{epregthm}), we assume locally semi-flat and graphical condition for $X$ and $h$. 
It seems unnatural at first glance. 
To explain why such conditions are supposed, we go back to the origin of 
deformed Hermitian Yang-Mills connections, that is, the work of Leung, Yau and Zaslow \cite{LeungYauZaslow}. 

Let $B$ be an open set in $\mathbb{R}^{n}$ with standard coordinates $x^{i}$ 
and $\phi$ be a strictly convex smooth function on $B$. 
Then, other coordinates on $B$ are introduced by 
$\tilde{x}_{i}:=\partial\phi/\partial x^{i}$ as the Legendre transform of $\phi$. 
Put $M:=B\times T^{n}$ and $W:=B\times(T^{n})^{*}$, 
where $T^{n}$ ($\cong\mathbb{R}^{n}/\mathbb{Z}^{n}$) is an $n$-torus 
with coordinates $y^{i}$ 
and $(T^{n})^{*}$ ($\cong(\mathbb{R}^{n})^{*}/(\mathbb{Z}^{n})^{*}$) is its dual 
with coordinates $\tilde{y}_{i}$.  
A complex structure and K\"ahler form on $M$ are defined by 
\[z^{i}:=x^{i}+\sqrt{-1}y^{i}\quad\mbox{and}\quad
\omega:=\frac{\sqrt{-1}}{2}\phi_{ij}(x)
dz^{i}\wedge d\bar{z}^{j}\]
with $\phi_{ij}(x)=\partial^2\phi(x)/\partial x^{i}\partial x^{j}$; 
those on $W$ are defined by 
\begin{equation}\label{WinLYZ}
\tilde{z}_{i}:=\tilde{x}_{i}+\sqrt{-1}\tilde{y}_{i}\quad\mbox{and}\quad
\tilde{\omega}:=\frac{\sqrt{-1}}{2}\phi^{ij}(x)
d\tilde{z}^{i}\wedge d\bar{\tilde{z}}^{j}
\end{equation}
with $(\phi^{ij})=(\phi_{ij})^{-1}$. 
We equip $M$ with a holomorphic volume form $\Omega:=dz^{1}\wedge\dots\wedge dz^{n}$. 

Fix a section $Y=(Y^{1},\dots,Y^{n})$ of $M$, regarding $M$ as a torus fibration over $B$, 
and put its graph by $S_{Y}:=\{\,(x,Y(x))\mid x\in B\,\}$. 
On the other hand, $Y$ assigns each point $x\in B$ to 
a connection $\nabla^{Y(x)}$ on the torus fiber $T^{n}(x)$ over $x$. 
This is defined by the canonical identification 
$T^{n}(x)\cong \mathop{\mathrm{Hom}}(\pi_{1}((T^{n}(x))^{*}),U(1))$, 
where we used the fact that the right hand side is just the moduli space of 
flat connections on $(T^{n}(x))^{*}$. 
The family of connections $\nabla^{Y(x)}$ along $x\in B$ constitutes 
a connection of the trivial $\mathbb{C}$ bundle $L:=\underline{\mathbb{C}}$ 
(with the standard metric $h:=\langle\, , \,\rangle$) on the whole $W$, 
written explicitly by 
\[\nabla^{Y}=d+\sqrt{-1}Y^{j}d\tilde{y}_{j}. \]
Then, the result of Leung, Yau and Zaslow is stated as follows. 

\begin{theorem}[Leung, Yau and Zaslow, \cite{LeungYauZaslow}]
$S_{Y}$ in $M$ is a special Lagrangian submanifold with phase $e^{\sqrt{-1}\theta}$ 
if and only if $\nabla^{Y}$ of $(L,h)$ on $W$ is a deformed Hermitian Yang-Mills connection with phase $e^{\sqrt{-1}\theta}$. 
\end{theorem}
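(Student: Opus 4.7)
The plan is to split the biconditional into two parallel equivalences, matching the two defining conditions on each side: the Lagrangian condition $\omega|_{S_Y}=0$ will correspond to the integrability condition $F^{0,2}=0$, and the special phase condition for $\Omega|_{S_Y}$ will correspond to the phase condition for $(\tilde\omega+F)^n$. Throughout, the Legendre transform $\tilde{x}_i=\phi_i(x)$ with $d\tilde{x}_i=\phi_{ij}dx^j$ is the dictionary linking the two sides, and the key input is the purely $x$-dependence of $Y=(Y^1,\dots,Y^n)$.

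First, I compute the curvature of $\nabla^{Y}=d+\sqrt{-1}Y^j\, d\tilde{y}_j$. Since $Y^j$ depends only on $x$, writing $\Gamma^j_k:=\partial Y^j/\partial x^k$ and converting via the Legendre transform, $F=\sqrt{-1}\,\Gamma^{j}_{k}\phi^{kl}\, d\tilde{x}_l\wedge d\tilde{y}_j$. Splitting $d\tilde{x}_l,d\tilde{y}_j$ into $(1,0)$ and $(0,1)$ parts with respect to $\tilde{z}_i=\tilde{x}_i+\sqrt{-1}\tilde{y}_i$, the condition $F^{0,2}=0$ is equivalent to the symmetry $\Gamma^{j}_{k}\phi^{kl}=\Gamma^{l}_{k}\phi^{kj}$. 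Simultaneously, pulling back $\omega=\phi_{ij}\,dx^{i}\wedge dy^{j}$ to $S_Y$ via $dy^{j}=\Gamma^{j}_{k}dx^{k}$ yields $\omega|_{S_Y}=\phi_{ij}\Gamma^{j}_{k}\,dx^{i}\wedge dx^{k}$, which vanishes iff $\phi_{ij}\Gamma^{j}_{k}$ is symmetric in $(i,k)$. Raising and lowering indices with $\phi$ and its inverse shows these two symmetry conditions are equivalent, giving the first half.

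For the second half, I compute both top forms and compare phases. On one hand, $dz^{j}|_{S_Y}=(\delta^{j}_{k}+\sqrt{-1}\Gamma^{j}_{k})dx^{k}$ gives
\[
\Omega|_{S_Y}=\det\!\bigl(I+\sqrt{-1}\Gamma\bigr)\,dx^{1}\wedge\cdots\wedge dx^{n}.
\]
On the other hand, writing $\tilde\omega+F=N^{j}_{l}\,d\tilde{x}_{l}\wedge d\tilde{y}_{j}$ with $N^{j}_{l}=(\delta^{j}_{k}+\sqrt{-1}\Gamma^{j}_{k})\phi^{kl}$, a standard multilinear computation on the $\tilde{x}$-$\tilde{y}$ tangent frame gives $(\tilde\omega+F)^{n}=n!\,\det(N)\cdot d\tilde{x}_{1}\wedge d\tilde{y}_{1}\wedge\cdots\wedge d\tilde{x}_{n}\wedge d\tilde{y}_{n}$, and $\det(N)=\det(I+\sqrt{-1}\Gamma)/\det(\phi_{ij})$. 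Since $\det(\phi_{ij})>0$ is real (by strict convexity of $\phi$), the arguments of these two top forms agree, so $\operatorname{Im}\bigl(e^{-\sqrt{-1}\theta}\Omega|_{S_Y}\bigr)=0$ iff $\operatorname{Im}\bigl(e^{-\sqrt{-1}\theta}(\tilde\omega+F)^{n}\bigr)=0$. Combining both halves yields the theorem.

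The main obstacle is really just accurate bookkeeping of factors of $\sqrt{-1}/2$, sign conventions in extracting the $(0,2)$ part, and the careful interplay between the base coordinates $x^i$ and the dual base coordinates $\tilde{x}_i$ through the Hessian $(\phi_{ij})$. Conceptually the argument is clean because the dual torus fibration is precisely set up so that the Hessian determinant, which distorts volumes between $B$ with coordinates $x$ and $B$ with coordinates $\tilde{x}$, cancels in the phase comparison, leaving the two phase conditions manifestly identical.
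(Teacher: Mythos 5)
The paper states this theorem only as a citation to Leung--Yau--Zaslow and supplies no proof of its own, so there is no internal argument to compare against; your write-up is a correct reconstruction of the standard LYZ computation. Both halves check out: the Legendre transform converts $F=\sqrt{-1}\,\Gamma^{j}_{k}\phi^{kl}\,d\tilde{x}_{l}\wedge d\tilde{y}_{j}$ so that $F^{0,2}=0$ becomes the symmetry of $\Gamma\Phi^{-1}$, which is equivalent (conjugating by $\Phi=(\phi_{ij})$) to the symmetry of $\Phi\Gamma$, i.e.\ $\omega|_{S_{Y}}=0$; and the two top forms carry the common factor $\det(I+\sqrt{-1}\Gamma)$ up to the positive real number $\det(\phi_{ij})$, so the two phase conditions agree.
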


Here, we observe the holomorphic structure on $L$ induced by $\nabla^{Y}$ 
under the assumption that $\nabla^{Y}$ is integrable. 
In Section 3.1 of \cite{LeungYauZaslow}, one can see that 
the integrability condition is equivalent to the existence of 
a locally defined smooth function $f$, which does not depends on $\tilde{y}$, 
so that $Y^{j}=\partial f/\partial \tilde{x}_{j}$. 
Put $\mathbf{e}:=e^{f}\cdot 1$ and regard this as a local frame of $L=\underline{\mathbb{C}}$. 
Then, one can see that $(\nabla^{Y}\mathbf{e})^{0,1}=0$. 
This means that $\mathbf{e}$ defines a holomorphic structure on $L$ with the associated Chern connection $\nabla^{Y}$. 

In the above explanation of the work of Leung, Yau and Zaslow, 
we pay attention to the following two properties. 

\begin{itemize}
    \item[(a)] The ambient space $W$ is (at least locally) diffeomorphic to the total space of a torus bundle. 
    Moreover, the coefficients of the K\"ahler form do not depend on $\tilde{y}$-coordinates 
    and are real values, see \eqref{WinLYZ}. 
    \item[(b)] There exists a holomorphic local frame $\mathbf{e}$ of $L$ so that $h(\overline{\mathbf{e}},\mathbf{e})$ does not depend on $\tilde{y}$-coordinates. 
    In the above case, we have $h(\overline{\mathbf{e}},\mathbf{e})=\langle \overline{\mathbf{e}},\mathbf{e} \rangle=e^{2f}$. 
\end{itemize}

Then, the first property (a) corresponds to locally semi-flat condition (Definition \ref{graphical}); 
the second one (b) corresponds to graphical condition (Definition \ref{graphical2}). 

These properties are also satisfied in the case where $W$ is the complement of the anti canonical divisor of a toric K\"ahler manifold, 
$L$ is a $T^{n}$-equivariant holomorphic line bundle and $h$ is a $T^{n}$-invariant Hermitian metric, see Section 9 of \cite{CollinsYau}. 
 
\subsection{Review of the work of Jacob and Yau}\label{JY}
In the work of Leung, Yau and Zaslow, main objects are connections. 
More precisely, those are Hermitian connections of a fixed complex line bundle $L$---rather than holomorphic apriori---with 
a given Hermitian metric $h$. 
As a consequence of dHYM condition, $L$ is given a holomorphic structure defined by the connection. 
Recently, Jacob and Yau \cite{JacobYau} switched main objects from connections to metrics. 
Namely, they fixed a holomorphic line bundle $L$, rather than complex, 
over a K\"ahler manifold $(X,\omega)$, 
and they tried to fined special Hermitian metrics of $L$ in the following sense. 

\begin{definition}\label{dHYMmetric}
A deformed Hermitian Yang--Mills metric with phase $e^{\sqrt{-1}\theta}$ is a Hermitian metric $h$ 
of $L$ so that its Chern connection satisfies 
\[\mathrm{Im}\left(e^{-\sqrt{-1}\theta}\bigl(\omega-F(h)\bigr)^{n}\right)=0, \]
where $F(h)$ is the curvature 2-form of the Chern connection explicitly given by 
$F(h):=(-1/2)\partial\bar{\partial}\log h$. 
\end{definition}

Readers may find that signs on the front of $F$ in Definition \ref{dHYMconnection} 
and \ref{dHYMmetric} are different. 
But, this is just a matter of convention. 
Actually, if $h$ is a dHYM metric of $L$ in the sense of Definition \ref{dHYMmetric}, 
then the Chern connection of $h^{-1}$ of $L^{-1}$ is a dHYM connection 
in the sense of Definition \ref{dHYMconnection} and vice versa. 

To find dHYM metrics, Jacob and Yau \cite{JacobYau} introduced 
a volume functional $V$ on the space of Hermitian metrics (see \eqref{volumeofh}) 
so that its minimizers are just dHYM metrics, 
and they studied its negative gradient flow. 
They named it the line bundle mean curvature flow and 
that is nothing but what we defined in Definition \ref{dfoflbmcf}. 

If the line bundle mean curvature flow has long time solution $\{\,h_{t}\,\}_{t\in[0,\infty)}$ and 
converges to some Hermitian metric $h_{\infty}$, 
we can say that $h_{\infty}$ is a dHYM metric since the flow is the negative gradient flow of $V$ 
and its minimizers are dHYM metrics. 
However, due to its nonlinearity, we do not know whether the flow exists for all time 
or blows up in finite time. 
Hence, it is very important to give a sufficient condition to ensure that a flow $h_{t}$ defined 
for $t\in [0,T)$ can be extended beyond $T$. 
Theorem 1.3 and Theorem 1.4 of \cite{JacobYau} are examples giving such sufficient conditions, 
and Proposition 5.2 of \cite{JacobYau} also can be considered as a sufficient condition. 
For comparison with our main theorem, we introduce Proposition 5.2 of \cite{JacobYau}. 

\begin{proposition}[Jacob and Yau, \cite{JacobYau}]\label{JacobYaupr52}
Suppose that $X$ is compact and $h_{t}$ is a line bundle mean curvature flow defined for $t\in [0,T)$. 
If there exist $A>0$ satisfying 
\[
    \frac{1}{A}\omega\leq \sqrt{-1}F(h_{t}) \leq A\omega
\]
for all $t\in [0,T)$, then $h_{t}$ can be extended beyond $T$. 
\end{proposition}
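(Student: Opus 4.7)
The plan is to translate the flow into a fully nonlinear uniformly parabolic equation for the potential $u$ defined by $h_t = e^{-u}h_0$, derive uniform a priori $C^{k,\alpha}$ estimates from the curvature bound, extract a smooth limit $h_T$ as $t \uparrow T$, and restart the flow by short-time existence. First I would compute $F(h_t) = F(h_0) + \tfrac{1}{2}\partial\bar\partial u$, so the hypothesis directly yields a uniform $L^\infty$ bound on $\partial\bar\partial u$. Diagonalizing $\sqrt{-1}F(h_t)$ with respect to $\omega$ and denoting its eigenvalues by $\mu_1,\dots,\mu_n\in[A^{-1},A]$, I obtain
\[
    \zeta = \prod_{i=1}^n \bigl(1+\sqrt{-1}\mu_i\bigr),
    \qquad
    \theta = \sum_{i=1}^n \arctan(\mu_i),
\]
so $\theta$ is uniformly bounded. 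Integrating $\partial_t u = \theta - \hat\theta$ against $u(\cdot,0)\equiv 0$ then gives the uniform $L^\infty$ bound $\|u(\cdot,t)\|_{L^\infty}\leq CT$ on $[0,T)$.

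Next I would rewrite the flow as
\[
    \partial_t u = G(x, \partial\bar\partial u) - \hat\theta,
    \qquad
    G(x,H) := \sum_{i=1}^n \arctan\bigl(\mu_i(x,H)\bigr),
\]
where $\mu_i(x,H)$ are the eigenvalues relative to $\omega$ of $\sqrt{-1}F(h_0)(x) + \tfrac{\sqrt{-1}}{2}H$. Because $\arctan$ is concave and increasing on $(0,\infty)$, standard concavity results for spectral functions of Hermitian matrices make $H\mapsto G(x,H)$ concave and increasing on the cone where all $\mu_i$ are positive, and its linearization has eigenvalues $(1+\mu_i^2)^{-1}$ uniformly bounded in $[(1+A^2)^{-1},(1+A^{-2})^{-1}]$, so the equation is uniformly parabolic on the range of Hessians dictated by the curvature hypothesis. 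The complex parabolic Evans--Krylov theorem, applied to this concave uniformly parabolic equation with the $C^0$ and $C^{1,1}$ bounds already in hand, would then yield a uniform $C^{2,\alpha}$ bound on $u$ over $X\times[0,T)$ for some $\alpha\in(0,1)$.

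With $C^{2,\alpha}$ in hand, differentiating the equation gives a linear uniformly parabolic equation with $C^{\alpha}$ coefficients for each first derivative of $u$; parabolic Schauder theory then promotes the regularity to $C^{3,\alpha}$, and iterating produces uniform $C^{k,\alpha}$ bounds for every $k\in\mathbb{N}$. Combined with the uniform Lipschitz-in-$t$ bound on $u$, Arzel\`a--Ascoli furnishes a smooth limit $u_T$ with $u(\cdot,t)\to u_T$ in $C^\infty(X)$ as $t\uparrow T$; the metric $h_T := e^{-u_T}h_0$ is smooth and still satisfies the curvature bound, and invoking short-time existence of the line bundle mean curvature flow with initial datum $h_T$ extends $h_t$ past $T$.

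The hardest step will be the $C^{2,\alpha}$ bound. Parabolic Evans--Krylov for concave fully nonlinear equations is classical in the scalar Euclidean setting, but its adaptation to this dHYM-type equation on a compact K\"ahler manifold---where the nonlinearity is a sum of $\arctan$'s of eigenvalues of an $\omega$-Hermitian matrix built from $F(h_0)+\tfrac{1}{2}\partial\bar\partial u$---requires verifying the structural hypotheses against the curved background rather than a flat Euclidean model. Once that estimate is in place, the remaining bootstrap and restart steps are routine.
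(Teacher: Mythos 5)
Your proposal is correct and follows exactly the strategy that the paper attributes to Jacob--Yau's Proposition 5.2 (the paper itself only cites the result and does not reprove it): the two-sided eigenvalue bound $\mu_i\in[A^{-1},A]$ makes the operator $\theta=\sum_i\arctan\mu_i$ concave and uniformly parabolic, so Evans--Krylov plus Schauder bootstrapping and short-time existence extend the flow. This is precisely the point the paper emphasizes when explaining why the hypothesis cannot be weakened to $-A\omega\leq\sqrt{-1}F(h_t)\leq A\omega$, so no further comparison is needed.
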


We note that replacing the assumption of Proposition \ref{JacobYaupr52} to 
\begin{equation}\label{introbd2}
-A\omega\leq \sqrt{-1}F(h_{t}) \leq A\omega
\end{equation}
for some $A>0$ 
causes serious problems because 
the positivity of all eigenvalues of $\sqrt{-1}F(h_{t})$ 
plays the important role in their proof relying on the Evans--Krylov theory. 
In that theory, the concavity of the operator 
$h \mapsto\theta(h)$ is essential and it is ensured by 
the positivity of all eigenvalues of $\sqrt{-1}F(h)$. 
In contrast, our main theorem (Theorem \ref{epregthm}) treats the case so that \eqref{introbd2} holds. 
It is written as $\sup_{V}|F(h(t))|\leq A$ in the theorem. 

\section{Scaling invariance}\label{scalinginvsec}
In this section, we fix some basic notations following \cite{JacobYau} and introduce a scaling which acts on line bundle mean curvature flows. 
Let $(X,g)$ be a K\"ahler manifold with $\dim_{\mathbb{C}}X=n$. 
Then, its K\"ahler form is locally given by
\[\omega=\frac{\sqrt{-1}}{2}g_{\bar{k}j}dz^{j}\wedge d\bar{z}^{k}. \]
Let $\pi:L\to X$ be a holomorphic line bundle. 
For a Hermitian metric $h$ on $L$, its curvature 2-from $F=F(h)$ is locally given by 
\[F=\frac{1}{2}F_{\bar{k}j}dz^{j}\wedge d\bar{z}^{k}:=-\frac{1}{2}\partial_{j}\partial_{\bar{k}}\log(h)dz^{j}\wedge d\bar{z}^{k}. \]
Then, one can easily prove that a complex number 
$Z_{L}:=\int_{X}(\omega-F(h))^{n}$ 
does not depend on the choice of metric $h$, see \cite{JacobYau} for detail. 
Hence, $Z_{L}\in\mathbb{C}$ is an invariant of $L$. 
Define $\zeta=\zeta(\omega,h):X\to \mathbb{C}$ by
\[
\zeta=\frac{(\omega-F(h))^{n}}{\omega^{n}}. 
\]
It is shown that $|\zeta|\geq 1$ in \cite{JacobYau}. 
We define $\theta=\theta(\omega,h):X\to (-\pi n/2,\pi n/2)$ by 
\[\theta:=\arctan \lambda_{1}+\dots+\arctan \lambda_{n}, \]
where $\lambda_{i}$ are eigenvalues of 
the endomorphism $K$ on $T^{1,0}X$ defined by 
\[K:=g^{j\bar{k}}F_{\bar{k}\ell}\frac{\partial}{\partial z^{j}}\otimes dz^{\ell}. \]
This definition of $\theta$ is based on the equation (2.5) in \cite{JacobYau} and it is called the {\it angle function} since it satisfies 
$\zeta/|\zeta|=e^{\sqrt{-1}\theta}$, 
see the equation (2.4) in \cite{JacobYau}. 

Then, in terms of the angle function, $h$ is a deformed Hermitian Yang--Mills metric with phase $e^{\sqrt{-1}\hat{\theta}}$ 
if and only if $\theta(\omega,h)=\hat{\theta}$. 
We also define a 1-form on $X$ by $H:=H(\omega,h)=d\theta$ 
and call it the {\em mean curvature 1-form} of $h$ with respect to $\omega$. 
Then, it is clear that $h$ is a deformed Hermitian Yang--Mills metric with some phase if and only if $H=0$. 
This is an analog of that a Lagrangian submanifold is special if and only if it is minimal. 

\begin{remark}\label{remforlbmcf}
Acting the exterior derivative to the both hand side of \eqref{defeqoflbmcf} and using the definition of line bundle mean curvature flows and $H=d\theta$, we get 
\[d\dot{u}=H, \]
where $\dot{u}$ is the time derivative of $u$. 
In this paper, we use this equation frequently. 
\end{remark}

The {\it volume}, mentioned in Subsection \ref{JY}, of a Hermitian metric $h$ of $L\to X$ with respect to $\omega$ is defied by 
\begin{equation}\label{volumeofh}
    V(\omega,h):=\int_{X}|\zeta|\frac{\omega^n}{n!}
\end{equation}
whenever it is finite. 
The {\it induced metric} of $h$ is also defined by 
\begin{equation}\label{defofeta}
\eta_{\bar{k}j}:=g_{\bar{k}j}+F_{\bar{k}\ell}g^{\ell\bar{m}}F_{\bar{m}j}. 
\end{equation}
Since $\eta$ is a positive $(1,1)$-form on $X$, 
we can define the following elliptic operator on $C^{\infty}(X)$: 
\[\Delta_{\eta}f:=\eta^{j\bar{k}}\partial_{j}\partial_{\bar{k}}f.\]
The following is the first variation formula of the volume given in \cite{JacobYau}. 

\begin{proposition}\label{firstvari}
For any smooth family of Hermitian metric 
$h_{t}=e^{-u(t)}h_{0}$ on $(X,\omega)$ 
so that $\mathop{\mathrm{supp}}u(t)$ is compact, we have 
\[
\frac{d}{dt}V(h_{t})=-\int_{X}\langle \bar{\partial}\dot{u},H^{(1,0)}\rangle_{\eta}|\zeta|\frac{\omega^n}{n!}
=\int_{X}(L_{\eta}\theta)\dot{u}|\zeta|\frac{\omega^n}{n!}, 
\]
where $L_{\eta}\theta:=\Delta_{\eta}\theta-\langle K^{*}(\bar{\partial}\theta),\partial \theta\rangle_{\eta}$, 
$(K^{*}(\bar{\partial}\theta))(\,\cdot\,):=(\bar{\partial}\theta)(\bar{K}(\,\cdot\,))$ and $H^{(1,0)}=\partial\theta$. 
\end{proposition}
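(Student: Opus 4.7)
The plan is to rewrite $V(h_t)$ as the real part of a complex integral, differentiate under the integral, integrate by parts twice, and identify the resulting wedge products pointwise in a frame that simultaneously diagonalises $g$ and $F$. Since $|\zeta|\omega^n = \operatorname{Re}(e^{-\sqrt{-1}\theta}\zeta\,\omega^n) = \operatorname{Re}\bigl(e^{-\sqrt{-1}\theta}(\omega - F(h_t))^n\bigr)$, I would start from
\[V(h_t) = \operatorname{Re}\int_X e^{-\sqrt{-1}\theta}\,\frac{(\omega - F(h_t))^n}{n!}.\]
Differentiating in $t$, the term produced by $\partial_t(e^{-\sqrt{-1}\theta})$ has integrand $-\sqrt{-1}\dot\theta\,|\zeta|\omega^n$ whose real part vanishes, so, using $\dot F = \tfrac12\partial\bar\partial\dot u$, one is left with
\[\frac{d}{dt}V(h_t) = -\frac{1}{2}\operatorname{Re}\int_X \Psi\wedge\partial\bar\partial\dot u,\qquad \Psi := e^{-\sqrt{-1}\theta}\,\frac{(\omega - F)^{n-1}}{(n-1)!}.\]

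Next I would integrate by parts once (compact support of $u(t)$ discards boundary terms) to obtain $\int\Psi\wedge\partial\bar\partial\dot u = -\int\partial\Psi\wedge\bar\partial\dot u$. The essential simplification is $\partial\Psi = -\sqrt{-1}\,\partial\theta\wedge\Psi$, which follows from $\partial\omega = 0$ (Kähler) and $\partial F(h) = 0$ (the $(2,1)$-part of the Bianchi identity $dF = 0$); the only surviving derivative hits $e^{-\sqrt{-1}\theta}$. The first equality then reduces to a pointwise algebraic identity relating $\partial\theta\wedge\bar\partial\dot u\wedge\Psi$ to $\langle\bar\partial\dot u, H^{(1,0)}\rangle_\eta\,|\zeta|\,\omega^n/n!$, which I would verify at an arbitrary point in a unitary frame diagonalising $g$ and $F$ simultaneously, with $F_{\bar k j} = \lambda_j\delta_{jk}$. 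In such a frame $\omega - F = \sum_j\tfrac{\sqrt{-1}}{2}(1 + \sqrt{-1}\lambda_j)\,dz^j\wedge d\bar z^j$ makes $(\omega-F)^{n-1}$ diagonal; its product with $\partial\theta\wedge\bar\partial\dot u$ picks out the $k$th slot with coefficient proportional to $\zeta/(1 + \sqrt{-1}\lambda_k)$, so $\partial\theta\wedge\bar\partial\dot u\wedge\Psi$ collapses to a sum over $k$ of $\partial_k\theta\,\partial_{\bar k}\dot u/(1 + \sqrt{-1}\lambda_k)$ times $|\zeta|\,\omega^n/n!$ up to a universal factor; taking the appropriate real/imaginary part then reproduces the $\eta$-pairing of $\bar\partial\dot u$ against $H^{(1,0)} = \partial\theta$.

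For the second equality I would integrate by parts once more, transferring $\partial_{\bar k}$ off $\dot u$ against the weighted measure $|\zeta|\omega^n/n!$. The principal term yields $\Delta_\eta\theta\cdot\dot u$, while the derivatives falling on the weights $\eta^{j\bar k}$, $|\zeta|$ and $\det g$ produce a lower-order correction; the nontrivial claim is that this correction equals $-\langle K^*(\bar\partial\theta),\partial\theta\rangle_\eta\cdot\dot u$, so that the two contributions combine into $L_\eta\theta\cdot\dot u$. In the same diagonalising frame, after further choosing Kähler normal coordinates for $g$ at the base point so that $\partial_{\bar\ell}\det g = 0$, one finds $\partial_{\bar\ell}\eta_{\bar k j} = (\lambda_j + \lambda_k)\partial_{\bar\ell}F_{\bar k j}$ and $\partial_{\bar\ell}\log|\zeta| = \sum_m \lambda_m\partial_{\bar\ell}\lambda_m/(1 + \lambda_m^2)$, and these curvature derivatives reassemble through $K^j_\ell = g^{j\bar m}F_{\bar m\ell}$ into precisely the stated $K^*$-inner product. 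The main obstacle throughout is this book-keeping: tracking the various factors of $\sqrt{-1}$ from the $(\sqrt{-1}/2)$-prefactor of $\omega$, from $e^{-\sqrt{-1}\theta}$ and from the real/imaginary-part operations, and showing that the derivatives of $\eta^{-1}$ and $|\zeta|$ recombine cleanly into the $K$-correction. Once one commits to a simultaneously diagonalising frame and Kähler normal coordinates at the base point, the algebra reduces to eigenvalue derivatives and is straightforward, though quite opaque in general coordinates.
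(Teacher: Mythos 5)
Your plan is sound and, for the second equality, it is essentially the computation the paper performs; the difference is one of sourcing. The paper proves nothing for the first equality beyond citing Proposition 3.4 of \cite{JacobYau}, and for the second equality it integrates by parts exactly as you propose but simply \emph{quotes} the key weight identity $\nabla_{\bar q}(\eta^{\ell\bar q}|\zeta|)=-H_{\bar u}g^{i\bar u}F_{\bar r i}\eta^{\ell\bar r}|\zeta|$ from the proof of that same proposition, after which $\eta^{\ell\bar q}\nabla_{\bar q}H_\ell=\Delta_\eta\theta$ and $H_{\bar u}g^{i\bar u}F_{\bar r i}=(K^*(\bar\partial\theta))_{\bar r}$ finish the job. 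You instead rederive both ingredients from scratch: the differential-form computation $\partial\Psi=-\sqrt{-1}\,\partial\theta\wedge\Psi$ (valid, since $\partial\omega=0$ and $\partial F=0$) followed by the pointwise evaluation of $\partial\theta\wedge\bar\partial\dot u\wedge\Psi$ in a diagonalising frame is precisely the Jacob--Yau argument, and your normal-coordinate identities $\partial_{\bar\ell}\eta_{\bar kj}=(\lambda_j+\lambda_k)\partial_{\bar\ell}F_{\bar kj}$ and $\partial_{\bar\ell}\log|\zeta|=\sum_m\lambda_m\partial_{\bar\ell}\lambda_m/(1+\lambda_m^2)$ do reassemble into the quoted weight identity. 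What your route buys is self-containedness; what it costs is exactly the bookkeeping you flag. The one step you should not wave at: since $1/(1+\sqrt{-1}\lambda_k)=\eta^{k\bar k}(1-\sqrt{-1}\lambda_k)$ in the diagonal frame, taking the real part produces, besides $\mathrm{Re}\bigl(\eta^{k\bar k}\partial_k\theta\,\partial_{\bar k}\dot u\bigr)$, an extra term $\eta^{k\bar k}\lambda_k\,\mathrm{Im}(\partial_k\theta\,\partial_{\bar k}\dot u)$; you must check that this either integrates to zero (it is a divergence) or is absorbed by the convention for $\langle\,\cdot\,,\,\cdot\,\rangle_\eta$ so as to match the stated formula, whose left-hand side is real while the pointwise integrand $\langle\bar\partial\dot u,H^{(1,0)}\rangle_\eta$ is not. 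This is exactly the content of Jacob--Yau's Proposition 3.4 and it does close, but as written your sketch leaves it implicit.
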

\begin{proof}
The first equality is given by Proposition 3.4 in \cite{JacobYau}. 
To see the second equality, we first compute as follows: 
\[
\begin{aligned}
\eta^{\ell\bar{q}}H_{\ell}\nabla_{\bar{q}}\dot{u}|\zeta|=&
\nabla_{\bar{q}}(\eta^{\ell\bar{q}}H_{\ell}\dot{u}|\zeta|)
-\nabla_{\bar{q}}(\eta^{\ell\bar{q}}|\zeta|)H_{\ell}\dot{u}
-\eta^{\ell\bar{q}}\nabla_{\bar{q}}H_{\ell}\dot{u}|\zeta|. 
\end{aligned}
\]
By computation in the proof of Proposition 3.4 in \cite{JacobYau}, one sees that 
\[\nabla_{\bar{q}}(\eta^{\ell\bar{q}}|\zeta|)=-H_{\bar{u}}g^{i\bar{u}}F_{\bar{r}i}\eta^{\ell\bar{r}}|\zeta|. \]
By $\nabla_{\ell}\theta=H_{\ell}$, we have $\eta^{\ell\bar{q}}\nabla_{\bar{q}}H_{\ell}=\Delta_{\eta}\theta$ and $H_{\bar{u}}g^{i\bar{u}}F_{\bar{r}i}=(K^{*}(\bar{\partial}\theta))_{\bar{r}}$. 
Putting everything together and using the divergence theorem give the second equality. 
\end{proof}

From Proposition \ref{firstvari}, it follows that $h$ is a critical point of the volume functional if and only if its angle $\theta:X\to (-\pi n/2,\pi n/2)$ satisfies $L_{\eta}\theta=0$ and 
also that the volume is nonincreasing along a line bundle mean curvature flow $h_{t}$ since $\bar{\partial}\dot{u}=H^{(0,1)}$ 
by Remark \ref{remforlbmcf}. 

\begin{proposition}
If $X$ is compact, then for any initial metric $h$ of $L$ and constant $\hat{\theta}\in\mathbb{R}$, 
there exists $T>0$ and a solution $h_{t}$ of \eqref{defeqoflbmcf} defined for $t\in[0,T)$ with $h_{0}=h$. 
Moreover, the solution is unique. 
\end{proposition}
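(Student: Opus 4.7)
The plan is to recast the flow as a scalar strictly parabolic equation for the potential $u$ on the compact manifold $X$, and then apply standard short-time existence theory for fully nonlinear parabolic PDEs.

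First I would rewrite the flow. Writing $h_t = e^{-u(t)}h_0$, the curvature $F(h_t) = F(h_0) + \tfrac{1}{2}\partial\bar\partial u$ depends affinely on the complex Hessian of $u$, so the angle $\theta(\omega, h_t)$ becomes a smooth function $\Phi(x, \partial\bar\partial u)$, and \eqref{defeqoflbmcf} takes the form
\[\frac{\partial u}{\partial t} = \Phi\bigl(x, \partial\bar\partial u\bigr) - \hat\theta, \qquad u(\,\cdot\,, 0) \equiv 0.\]
Next I would check strict parabolicity. Diagonalizing $g$ and $F$ simultaneously and using $\arctan'(\lambda) = 1/(1+\lambda^2)$ together with $\eta_{\bar k j} = (1+\lambda_j^2)g_{\bar k j}$ in that frame, one sees that the linearization of $\theta = \sum_i \arctan\lambda_i$ in a direction $\delta u$ is $\eta^{j\bar k}\partial_j\partial_{\bar k}(\delta u) = \Delta_\eta(\delta u)$ (this is essentially the computation behind Proposition~\ref{firstvari}). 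Since $\eta$ from \eqref{defofeta} is positive definite, $\Delta_\eta$ is strictly elliptic with smooth coefficients in an open $C^2$-neighborhood of the initial data, so the PDE is uniformly parabolic there.

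With parabolicity in hand, I would invoke standard short-time existence via the inverse function theorem in parabolic H\"older spaces. Consider the nonlinear map
\[u \longmapsto \bigl(\partial_t u - \Phi(x, \partial\bar\partial u) + \hat\theta,\ u|_{t=0}\bigr)\]
from $C^{2+\alpha,\,1+\alpha/2}(X \times [0, T])$ to $C^{\alpha,\,\alpha/2}(X \times [0, T]) \times C^{2+\alpha}(X)$. Its Fr\'echet derivative at $u \equiv 0$ is $v \mapsto (\partial_t v - \Delta_{\eta_0} v,\ v|_{t=0})$, a linear strictly parabolic initial-value problem that is an isomorphism onto the target by classical Schauder theory on compact $X$. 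The inverse function theorem then produces a short-time solution $u \in C^{2+\alpha,\,1+\alpha/2}$, which parabolic bootstrapping (differentiating the equation in space yields linear strictly parabolic equations for spatial derivatives of $u$) upgrades to a smooth solution on $X \times [0, T)$. For uniqueness, if $u_1, u_2$ are two solutions, the difference $w := u_1 - u_2$ satisfies a linear parabolic equation $\partial_t w = a^{j\bar k}(x,t)\,\partial_j\partial_{\bar k} w$ with bounded coefficients (from the integrated mean-value theorem applied to $\Phi$) and vanishes at $t = 0$, so the parabolic maximum principle on compact $X$ forces $w \equiv 0$.

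The main obstacle is organizational rather than analytical: one must carefully translate Definition~\ref{dfoflbmcf} into the scalar PDE form above and verify strict parabolicity. The concavity issues that complicate long-time theory for the dHYM equation do not arise at the short-time level, since $\arctan'(\lambda) > 0$ everywhere already suffices for ellipticity of the linearization, and this is all that short-time existence requires.
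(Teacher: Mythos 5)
Your proof is correct, but it takes a genuinely different route from the paper's. You treat $\partial_t u=\Phi(x,\partial\bar\partial u)-\hat\theta$ directly as a fully nonlinear scalar equation, verify that its linearization is $\Delta_{\eta}$ (strictly elliptic because $\eta=g+Fg^{-1}F>0$ irrespective of the signs of the eigenvalues of $F$), and then run the inverse function theorem in parabolic H\"older spaces, bootstrap, and close uniqueness with the maximum principle applied to the linear equation satisfied by $u_{1}-u_{2}$. The paper instead differentiates the flow in time: it invokes equation (5.1) of \cite{JacobYau}, $\ddot u=\Delta_{\eta}\dot u$, solves this strongly parabolic equation for $f:=\dot u$ with initial datum $\theta(\omega,h)-\hat\theta$, recovers $u$ by integrating $f$ in time, and then uses $\dot\theta=\Delta_{\eta}\dot u$ (equation (3.4) of \cite{JacobYau}) to see that $\dot u-\theta$ is time-independent and hence equal to $-\hat\theta$. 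Both arguments rest on the same analytic fact --- the linearized operator is $\Delta_{\eta}$ and ellipticity needs only $\arctan'>0$, not concavity --- so the substance is shared. What your route buys is self-containedness and the avoidance of a subtlety in the paper's formulation: the equation $\dot f=\Delta_{\eta}f$ has coefficients depending on $u=\int_{0}^{t}f\,ds$, so it is not literally a local parabolic PDE for $f$ alone and strictly speaking needs an extra fixed-point or system argument; your direct IFT approach sidesteps this at the cost of setting up the Banach-space machinery. One minor point to watch in your uniqueness step: the averaged coefficients $a^{j\bar k}=\int_{0}^{1}\eta^{j\bar k}\bigl(sF(h_{1,t})+(1-s)F(h_{2,t})\bigr)ds$ are indeed positive definite because $\eta$ is positive for \emph{every} Hermitian $F$, so the maximum principle does apply --- it is worth saying this explicitly since positivity of the intermediate coefficients is exactly the kind of thing that fails for operators requiring a convexity constraint.
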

\begin{proof}
By the equation (5.1) in \cite{JacobYau}, we have 
\[\ddot{u}=\Delta_{\eta}\dot{u}\]
for a line bundle mean curvature flow $h_{t}=e^{-u(t)}h_{0}$. 
Since this is a strongly parabolic PDE for $f:=\dot{u}$, there exists $T>0$ and a unique solution $f$ of 
$\dot{f}=\Delta_{\eta}f$ defined for $t\in[0,T)$ with initial condition $f(0)=\theta(\omega,h)-\hat{\theta}$. 
For $t\in[0,T)$, define 
\[u(\,\cdot\,,t):=\int_{0}^{t}f(\,\cdot\,,s)ds. \]
Then, $u(\,\cdot\,,0)\equiv 0$ and $\ddot{u}=\Delta_{\eta}\dot{u}=\dot{\theta}$ where we used the equation (3.4) in \cite{JacobYau}. 
Thus, there exists a time-independent function $w$ on $X$ such that $\dot{u}=\theta-w$. 
Then, by the initial condition $\dot{u}(\,\cdot\,,0)=f(\,\cdot\,,0)=\theta(\omega,h)-\hat{\theta}$, 
we see that $w\equiv \hat{\theta}$. 
Thus, $h_{t}:=e^{-u(t)}h$ is a solution of \eqref{defeqoflbmcf} with $h_{0}=h$. 
The above construction indicates the uniqueness of solution. 
\end{proof}

The following reveals a scaling invariance of $\zeta$. 
\begin{proposition}\label{scinv}
For $a>0$ and $k\in\mathbb{N}_{>0}$, the function $\zeta=\zeta(\omega,h):X\to\mathbb{C}$ satisfies 
\[
\begin{aligned}
\zeta(\omega,ah)=\zeta(\omega,h) \quad\mbox{and}\quad
\zeta(k\omega,h^{\otimes k})=\zeta(\omega,h), 
\end{aligned}
\]
where $h^{\otimes k}$ is regarded as 
a Hermitian metric of $L^{\otimes k}$. 
\end{proposition}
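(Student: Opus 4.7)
The plan is to reduce both identities to direct calculations with the curvature formula $F(h)=-\tfrac{1}{2}\partial\bar\partial \log h$ and then observe that the scaling factors cancel between numerator and denominator in the defining ratio $\zeta=(\omega-F(h))^n/\omega^n$.

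For the first identity, I would compute $F(ah)$ locally. In a trivialization, $\log(ah)=\log a+\log h$, and since $a>0$ is constant, $\partial\bar\partial \log a=0$. Hence $F(ah)=F(h)$, which immediately gives $\zeta(\omega,ah)=\zeta(\omega,h)$ because the defining ratio depends on $h$ only through $F(h)$.

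For the second identity, the main point is the behavior of the metric $h^{\otimes k}$ on $L^{\otimes k}$. If $s$ is a local nonvanishing holomorphic section of $L$, then $s^{\otimes k}$ is a local nonvanishing holomorphic section of $L^{\otimes k}$, and by definition of the tensor-product metric
\[
h^{\otimes k}(\overline{s^{\otimes k}},s^{\otimes k})=h(\bar s,s)^k,
\]
so locally $\log h^{\otimes k}=k\log h$. Applying $-\tfrac{1}{2}\partial\bar\partial$ gives $F(h^{\otimes k})=k\,F(h)$. I would then substitute into the definition of $\zeta$:
\[
\zeta(k\omega,h^{\otimes k})=\frac{(k\omega-F(h^{\otimes k}))^{n}}{(k\omega)^{n}}=\frac{(k\omega-kF(h))^{n}}{(k\omega)^{n}}=\frac{k^{n}(\omega-F(h))^{n}}{k^{n}\omega^{n}}=\zeta(\omega,h),
\]
where the factor $k^n$ pulled out of the top-degree form cancels against the same factor in the denominator.

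Honestly, there is no real obstacle in this proposition: both claims are formal consequences of the fact that $\log$ turns scalar multiplication into addition, that $\partial\bar\partial$ annihilates constants, and that top-degree wedge powers are homogeneous of degree $n$. The only thing one should be slightly careful about is checking that $F(h^{\otimes k})$ really is the curvature of the Chern connection of the naturally induced Hermitian metric on $L^{\otimes k}$, which is precisely the local verification above.
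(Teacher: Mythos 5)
Your proof is correct and follows essentially the same route as the paper's: the paper simply notes $F(ah)=F(h)$ and $F(h^{\otimes k})=kF(h)$ and then cancels the factor $k^{n}$ in the ratio, exactly as you do. Your version just spells out the local verifications of these two curvature identities, which the paper leaves implicit.
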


\begin{proof}
The first one is clear since $F(ah)=F(h)$. The second one follows from
\[\zeta(k\omega,h^{\otimes k})=\frac{(k\omega-kF(h))^{n}}{(k\omega)^{n}}=\zeta(\omega,h) \]
since $F(h^{\otimes k})=kF(h)$. 
\end{proof}

\begin{proposition}\label{scoflbmcf}
Let $h=\{\,h_{t}\,\}_{t\in[0,T)}$ $(T<\infty)$ be a line bundle mean curvature flow of $L\to X$ with respect to $\omega$. 
For $a>0$, $k\in \mathbb{N}_{>0}$, $T'\in\mathbb{R}$ and $s\in [-kT',k(T-T'))$, define a Hermitian metric of $L^{\otimes k}$ by
\[\tilde{h}_{s}=a h_{t}^{\otimes k}\] 
with relation $t=T'+s/k$. 
Then, $\tilde{h}_{s}$ is a line bundle mean curvature flow on $L^{\otimes k}\to X$ with respect to $k\omega$ and initial metric $a h_{0}^{\otimes k}$. 
\end{proposition}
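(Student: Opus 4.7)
The plan is to reduce everything to the scaling invariance of $\zeta$ already recorded in Proposition~\ref{scinv}, together with a chain rule.

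First, I would introduce potentials. Writing $h_t = e^{-u(t)}h_0$ with $u(\,\cdot\,,0)\equiv 0$ and $\tilde{h}_s = e^{-\tilde{u}(s)}(a h_0^{\otimes k})$, the defining relation $\tilde{h}_s = a h_t^{\otimes k}$ with $t = T' + s/k$ forces
\[
\tilde{u}(s) = k\,u\!\bigl(T' + s/k\bigr).
\]
Up to the harmless time translation $\sigma := s + kT' \in [0,kT)$, the function $\tilde{u}$ plays the role of the potential in Definition~\ref{dfoflbmcf}, and at $s=-kT'$ (i.e.\ $\sigma=0$) we recover the claimed initial metric $a h_0^{\otimes k}$.

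Next, I would differentiate. By the chain rule the factor $k$ from $\tilde{u} = ku$ and the factor $1/k$ from $t = T' + s/k$ cancel exactly, giving
\[
\frac{d}{ds}\tilde{u}(s) \;=\; \frac{d}{dt}u(t)\bigg|_{t=T'+s/k} \;=\; \theta(\omega,h_t) - \hat{\theta},
\]
where the last equality uses that $h_t$ is a line bundle mean curvature flow. So the goal becomes identifying $\theta(\omega,h_t)$ with the angle function $\theta(k\omega,\tilde{h}_s)$ of the scaled data.

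This is where Proposition~\ref{scinv} comes in: it gives $\zeta(k\omega, a h_t^{\otimes k}) = \zeta(\omega, h_t)$, so the two angle functions agree modulo $2\pi$. To upgrade the equality from $\mathbb{R}/2\pi\mathbb{Z}$ to $\mathbb{R}$ I would appeal to the explicit lift $\theta = \sum_i \arctan \lambda_i$, where $\lambda_i$ are the eigenvalues of $K = g^{j\bar k}F_{\bar k \ell}\,\partial_j\otimes dz^\ell$. Under $(\omega,h)\mapsto (k\omega, a h^{\otimes k})$, the inverse metric $g^{j\bar k}$ picks up a factor $1/k$, the curvature $F$ picks up a factor $k$ from the $\otimes k$ (and is insensitive to the constant $a$), and these two factors cancel in the formula for $K$. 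Thus the eigenvalues $\lambda_i$ are unchanged, and the $\mathbb{R}$-valued lifts agree on the nose.

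Combining these steps yields $\tfrac{d}{ds}\tilde{u} = \theta(k\omega,\tilde{h}_s) - \hat{\theta}$, so $\tilde{h}_s$ is a line bundle mean curvature flow with the same constant $\hat{\theta}$. The only genuinely subtle point is the $\mathbb{R}$-lifting issue for $\theta$; everything else is bookkeeping with the chain rule. Using the $\arctan$-sum formula together with the direct scaling check of $K$ removes that subtlety cleanly.
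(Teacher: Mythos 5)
Your proof is correct and follows the same route as the paper: write $\tilde{u}(s)=k\,u(T'+s/k)$, observe the chain-rule cancellation $\frac{d}{ds}\tilde{u}=\frac{d}{dt}u$, and invoke the scaling invariance of the angle function. Your extra verification that the eigenvalues of $K$ are unchanged (so that the $\mathbb{R}$-valued lift $\theta=\sum_i\arctan\lambda_i$, not just $e^{\sqrt{-1}\theta}$, is preserved) is in fact a more careful justification than the paper's bare appeal to Proposition~\ref{scinv}, which only controls $\zeta$.
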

\begin{proof}
Put $h_{t}=e^{-u(t)}h_{0}$ and $\tilde{h}_{s}=e^{-\tilde{u}(s)}ah_{0}^{\otimes k}$. 
Then, we have $\tilde{u}(s)=k u(t)$ with relation $t=T'+s/k$. 
Thus, we have 
\[\frac{d}{ds}\tilde{u}(s)=k\frac{d}{dt}u(t)\times \frac{1}{k}=\frac{d}{dt}u(t). \]
On the other hand, by Proposition \ref{scinv}, we have $\theta(k\omega,ah_{t}^{\otimes k})=\theta(\omega,h_{t})$. 
Thus, the proof is complete. 
\end{proof}

\begin{proposition}
For $k\in \mathbb{N}_{>0}$, it holds that $|\tilde{\nabla}F(h^{\otimes k})|^2=|\nabla F(h)|^2/k$, 
where $\tilde{\nabla}$ is the Levi-Civita connection of $k\omega$ and 
$|\,\cdot\,|$ on the left hand side is the norm with respect to $k\omega$. 
\end{proposition}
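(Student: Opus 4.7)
The plan is to exploit three elementary facts and combine them: the Chern/Levi-Civita connection is invariant under constant rescaling of the Kähler metric; the curvature scales linearly with the tensor power of the line bundle; and the norm on a $3$-tensor involves three contractions with the inverse metric, each of which scales by $1/k$.

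First I would record that $F(h^{\otimes k}) = k\,F(h)$. This is immediate from the local formula $F(h) = -(1/2)\partial\bar\partial \log h$, since $h^{\otimes k}$ corresponds to $h^{k}$ and $\log(h^{k}) = k\log h$; this identity was already used in the proof of Proposition~\ref{scinv}.

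Next I would observe that the Kähler metric $k\omega$ has Christoffel symbols equal to those of $\omega$. Indeed, the Kähler Christoffel symbols are $\Gamma^{m}_{ij} = g^{m\bar{q}}\partial_{i} g_{\bar{q}j}$, and replacing $g_{\bar{q}j}$ by $k g_{\bar{q}j}$ simultaneously multiplies the factor $g_{\bar{q}j}$ by $k$ and the inverse $g^{m\bar{q}}$ by $1/k$, leaving $\Gamma^{m}_{ij}$ unchanged. Hence $\tilde{\nabla} = \nabla$ as differential operators on tensors, and in particular, in any local holomorphic frame,
\[
\tilde{\nabla}_{m} F(h^{\otimes k})_{\bar{k}j} = \nabla_{m}\bigl(k F(h)_{\bar{k}j}\bigr) = k\,\nabla_{m} F(h)_{\bar{k}j},
\]
with the analogous identity for $\tilde{\nabla}_{\bar{m}}$.

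Finally I would compare the pointwise norms. The squared norm $|\nabla F(h)|^2$ with respect to $\omega$ is a contraction of the components of $\nabla F(h)$, which carry three tensor indices, with three copies of the inverse metric $g^{j\bar{k}}$. Passing from $\omega$ to $k\omega$ replaces each $g^{j\bar{k}}$ by $(1/k) g^{j\bar{k}}$, producing a factor of $k^{-3}$, while the components themselves pick up a factor of $k^{2}$ by the displayed identity above. Combining,
\[
|\tilde{\nabla} F(h^{\otimes k})|^{2}_{k\omega} = k^{-3}\cdot k^{2}\, |\nabla F(h)|^{2}_{\omega} = \frac{1}{k}|\nabla F(h)|^{2}_{\omega},
\]
which is the claim. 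There is no serious obstacle here; the only point to be careful about is that both the components of $\nabla F$ and the inverse metric factors used to form the norm are counted correctly, and that one uses the holomorphic-frame Christoffel computation above to justify $\tilde\nabla = \nabla$.
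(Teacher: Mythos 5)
Your proof is correct and follows essentially the same route as the paper: both rest on $F(h^{\otimes k})=kF(h)$ and $\tilde{\nabla}=\nabla$, and then compute how the norm of the $3$-tensor $\nabla F$ scales (the paper via orthonormal frames $e_{j}=\sqrt{k}\tilde{e}_{j}$, you via the three inverse-metric contractions, which is the same count $k^{2}\cdot k^{-3}=1/k$). Your explicit Christoffel-symbol justification of $\tilde{\nabla}=\nabla$ fills in a step the paper merely declares clear.
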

\begin{proof}
It is clear that $F(h^{\otimes k})=k F(h)$ 
and $\tilde{\nabla}=\nabla$. 
Thus, $\tilde{\nabla}F(h^{\otimes k})=k\nabla F(h)$. 
Let $\tilde{e}_{j}$ be a local orthonormal frame with respect to $k\omega$. 
Then, $e_{j}:=\sqrt{k}\tilde{e}_{j}$ becomes a local orthonormal frame with respect to $\omega$ and 
\[
(\tilde{\nabla}F(h^{\otimes k}))(\tilde{e}_{i},\tilde{e}_{j},\tilde{e}_{k})=
k(\nabla F(h))(e_{i},e_{j},e_{k})\times \frac{1}{\sqrt{k}^3}
=\frac{1}{\sqrt{k}}(\nabla F(h))(e_{i},e_{j},e_{k}). 
\]
Then, the proof is complete. 
\end{proof}

\begin{definition}\label{defofscaling}
Let $((X,\omega),L,h)$ be a triplet of a K\"ahler manifold $(X,g)$, 
a holomorphic line bundle $\pi:L\to X$ and a line bundle mean curvature flow 
$h=\{\,h_{t}\,\}_{t\in[0,T)}$ of $L$. For given $T'\in\mathbb{R}$ and 
$k\in\mathbb{N}_{>0}$, we define the scaling operator $D_{k}^{T'}$ by 
$D_{k}^{T}((X,\omega),L,h):=((X,k\omega),L^{\otimes k},D_{k}^{T'}h)$, 
where $(D_{k}^{T'}h)_{s}$ is defined by 
\[(D_{k}^{T'}h)_{s}:=h_{t}^{\otimes k}\]
for $s\in[-kT',k(T-T'))$ with relation $t=T'+s/k$. 
\end{definition}

By Proposition \ref{scoflbmcf}, we see that $D_{k}^{T'}h$ is a line bundle 
mean curvature flow of $L^{\otimes k}$ on $(X,k\omega)$. 

\section{Divergence theorem}\label{sec:divergence}
In this section, we build a parallel framework of Hermitian metrics with geometry of submanifolds 
and give an analog of the divergence theorem for submanifolds. 
We also give an application of it in the latter subsection. 
\subsection{A divergence theorem}
We fix a K\"ahler manifold $(X,g)$ with $\dim_{\mathbb{C}}X=n$ and  
a holomorphic line bundle $\pi:L\to X$.  
For a Hermitian metric $h$, a new measure $d\mu(h)$ on $X$ is defied by 
\[d\mu(h)=|\zeta|\frac{\omega^{n}}{n!}. \]
Put $v:=|\zeta|:X\to\mathbb{R}^{+}$. 
For a smooth section $Y=Y^{i}(\partial/\partial z^{i})$ of $T^{1,0}X$, the $v$-weighted divergence of $Y$ is defined by
\[\mathop{\mathrm{div}_{v}}Y:=v^{-1}\nabla_{i}\left( v Y^{i} \right).\]
Then, by the usual divergence theorem and the definition of $d\mu(h)$, we have 
\begin{equation}\label{eq:stddivthm}
\int_{U}\mathop{\mathrm{div}_{v}}Y d\mu(h)=\int_{\partial U}g\left(vY,\nu\right)d\mu|_{\partial U}
\end{equation}
on a relatively compact open set $U\subset X$ with piecewise smooth boundary $\partial U$, 
where $d\mu|_{\partial U}$ is the induced measure on $\partial U$ with respect to the induced metric $g|_{\partial U}$ 
and $\nu$ is the outer unit normal vector field along $\partial U$. 

On a chart $U$ with holomorphic coordinates $(z_{1},\dots,z_{n})$, put
\[
\mathcal{E}_{i}:=
\frac{\partial}{\partial z^{i}}\oplus F_{\bar{j}i}d\bar{z}^{j}
=\frac{\partial}{\partial z^{i}}\oplus \bar{\partial}\left(\frac{\partial (-\log h)}{\partial z^{i}}\right)
\]
for $i=1,\dots,n$. 
It is clear that $\{\mathcal{E}_{i}\}_{i=1}^{n}$ are $\mathbb{C}$-linearly independent sections of $T^{1,0}X\oplus \Lambda^{0,1}X$ over $U$. 
Here, $V\oplus W:=\{\,v\oplus w:=(v,w)\in V\times W\,\}$ is the formal sum 
of vector spaces $V$, $W$ with sum $(v_{1}\oplus w_{1})+(v_{2}\oplus w_{2}):=(v_{1}+v_{2})\oplus (w_{1}+w_{2})$ and scalar product $\lambda\cdot (v\oplus w):=(\lambda v)\oplus (\lambda w)$. 
Let $U'$ be another chart with holomorphic coordinates $(w_{1},\dots,w_{n})$ satisfying $U\cap U'\neq \emptyset$, 
and put 
\[
\mathcal{E}'_{j}:=\frac{\partial}{\partial w^{j}}\oplus \bar{\partial}\left(\frac{\partial (-\log h)}{\partial w^{j}}\right). 
\]
Then, on $U\cap U'$, it follows that 
\begin{equation}\label{changeofe}
\begin{aligned}
\mathcal{E}_{i}=&\left(\sum_{j=1}^{n}\frac{\partial w^{j}}{\partial z^{i}}\frac{\partial}{\partial w^{j}}\right)
\oplus \bar{\partial}\left(\sum_{j=1}^{n}\frac{\partial w^{j}}{\partial z^{i}}\frac{\partial(-\log h)}{\partial w^{j}}\right)\\
=&\left(\sum_{j=1}^{n}\frac{\partial w^{j}}{\partial z^{i}}\frac{\partial}{\partial w^{j}}\right)
\oplus \left(\sum_{j=1}^{n}\frac{\partial w^{j}}{\partial z^{i}}\bar{\partial}\left(\frac{\partial(-\log h)}{\partial w^{j}}\right)\right)\\
=&\sum_{j=1}^{n}\frac{\partial w^{j}}{\partial w^{i}}\left(\frac{\partial}{\partial w^{j}}\oplus \bar{\partial}\left(\frac{\partial(-\log h)}{\partial w^{j}}\right)\right)\\
=&\sum_{j=1}^{n}\frac{\partial w^{j}}{\partial z^{i}} \mathcal{E}'_{j}. 
\end{aligned}
\end{equation}

Thus, transition functions from $\{\mathcal{E}_{i}\}_{i=1}^{n}$ to $\{\mathcal{E}_{i}'\}_{i=1}^{n}$ are holomorphic, 
and the following definition makes sense. 

\begin{definition}
For a Hermitian metric $h$ on $L$, a holomorphic subbundle of $T^{1,0}X\oplus \Lambda^{0,1}X$ of rank $n$, denoted by $Th$, is defined by 
\[Th|_{U}:=\mathrm{Span}_{\mathbb{C}}\{\,\mathcal{E}_{1},\dots,\mathcal{E}_{n}\,\}\]
on each $U$. 
We call this subbundle $Th\subset T^{1,0}X\oplus \Lambda^{0,1}X$ the {\it tangent bundle} of $h$. 
\end{definition}

\begin{remark}\label{whyEiisso}
The notion of $Th$ is an analog of the tangent bundle $TL$ 
of a Lagrangian submanifold $L\subset \mathbb{C}^{n}$ which is written 
as the graph of the gradient of a function. 
Precisely, the tangent bundle of a Lagrangian submanifold $L=\{\,(x,\nabla\psi(x))\mid x\in\mathbb{R}^{n}\,\}$, where $\psi=\psi(x)$ is 
a smooth function on $\mathbb{R}^{n}$, is spanned by 
\begin{equation}\label{subei}
    E_{i}:=\frac{\partial}{\partial x^{i}}+\frac{\partial^2 \psi}{\partial x^{i}\partial x^{j}}\frac{\partial}{\partial y^{j}},\quad i=1,\dots, n. 
\end{equation}
\end{remark}

Note that $Th$ is holomorphically isomorphic to $T^{1,0}X$ since the transition functions are 
$\partial w^{j}/\partial z^{i}$ by \eqref{changeofe}. 
Actually, the isomorphism is given by $\mathcal{E}_{i}\mapsto \partial /\partial z^{i}$. 
We denote this isomorphism by $\widetilde{\bullet}:Th\to T^{1,0}X$. 

\begin{definition}
Let $\mathcal{Y}$ and $\mathcal{Z}$ be smooth sections of $T^{1,0}X\oplus \Lambda^{0,1}X$ with local expressions 
\begin{equation}\label{expYZ}
    \mathcal{Y}=Y^{j}\frac{\partial}{\partial z^{j}}\oplus Y_{\bar{j}}d\bar{z}^{j}\quad\text{and}\quad 
\mathcal{Z}=Z^{k}\frac{\partial}{\partial z^{k}}\oplus Z_{\bar{k}}d\bar{z}^{k}. 
\end{equation}
Then, a Hermitian metric $\langle\,\cdot\,,\,\cdot\,\rangle$ on $T^{1,0}X\oplus \Lambda^{0,1}X$ is defined by 
\[\langle \overline{\mathcal{Y}},\mathcal{Z}\rangle:=g_{\bar{j}k}\overline{Y^{j}}Z^{k}+g^{j\bar{k}}\overline{Y_{\bar{j}}}Z_{\bar{k}}. \]
The orthogonal compliment of $Th\subset T^{1,0}X\oplus \Lambda^{0,1}X$ with respect to this Hermitian metric is denoted by $T^{\bot}h$ 
and called the {\it normal bundle} of $h$. 
\end{definition}

\begin{definition}
Let $\mathcal{Y}$ be a smooth section of $T^{1,0}X\oplus \Lambda^{0,1}X$. 
We denote the $Th$-part (resp., $T^{\bot}h$-part) of $\mathcal{Y}$ by $\mathcal{Y}^{\top}$ (resp., $\mathcal{Y}^{\bot}$), 
and call it the {\it tangential part} (resp., the {\it normal part}) of $\mathcal{Y}$ with respect to $h$. 
Moreover, we call type $(1,0)$ vector field $\widetilde{\mathcal{Y}^{\top}}$ the {\it associated vector field} with $\mathcal{Y}$. 
\end{definition}

Since the Hermitian metric $\langle\,\cdot\,,\,\cdot\,\rangle$ of $T^{1,0}X\oplus \Lambda^{0,1}X$ and 
the induced metric $\eta$ on $T^{1,0}X$ perform nicely as 
\[
\langle \overline{\mathcal{E}_{i}},\mathcal{E}_{\ell}\rangle
=g_{\bar{i}\ell}+g^{j\bar{k}}F_{\bar{i}j}F_{\bar{k}\ell}
=\eta_{\bar{i}\ell}, 
\]
the tangential part of $\mathcal{Y}$ with respect to $h$ and its associated vector field are easily written by
\begin{equation}\label{YandYtil}
\mathcal{Y}^{\top}:=\eta^{i\bar{j}}\langle \overline{\mathcal{E}_{j}},\mathcal{Y}\rangle \mathcal{E}_{i}
\quad\mbox{and}\quad
\widetilde{\mathcal{Y}^{\top}}:=\eta^{i\bar{j}}\langle \overline{\mathcal{E}_{j}},\mathcal{Y}\rangle \frac{\partial}{\partial z^{i}}. 
\end{equation}
Moreover, smooth sections $\mathcal{F}_{i}$ of $T^{1,0}X\oplus \Lambda^{0,1}X$ defined by 
\[\mathcal{F}_{i}:=\left(-F_{\bar{k}i}g^{j\bar{k}}\frac{\partial}{\partial z^{j}}\right)\oplus \left(g_{\bar{\ell}i}d\bar{z}^{\ell}\right)\]
satisfy $\langle\overline{\mathcal{F}_{i}}, \mathcal{F}_{j}\rangle=\eta_{\bar{i}j}$ and 
$\langle\overline{\mathcal{E}_{i}}, \mathcal{F}_{j}\rangle=0$. 
Thus, $\{\, \mathcal{F}_{i}\,\}_{i=1}^{n}$ is a basis of $T^{\bot}h$, 
and the normal part of $\mathcal{Y}$ with respect to $h$ is given by 
\begin{equation}\label{Yn1}
\mathcal{Y}^{\bot}=\langle\overline{\mathcal{F}_{i}}, \mathcal{Y}\rangle \eta^{j\bar{i}}\mathcal{F}_{j}. 
\end{equation}

\begin{definition}
Let $\mathcal{Y}$ be a smooth sections of $T^{1,0}X\oplus \Lambda^{0,1}X$ with a local expression as in \eqref{expYZ}. 
Then, we define its {\it divergence along} $h$, which is a smooth function on $X$, by 
\begin{equation}\label{divalongh}
    \mathop{\mathrm{div}_{h}}\mathcal{Y}:=\nabla_{i}Y^{k}\eta^{i\bar{j}}g_{\bar{j}k}
    +\nabla_{i}Y_{\bar{k}}\eta^{i\bar{j}}F_{\bar{j}\ell}g^{\ell\bar{k}}. 
\end{equation}
\end{definition}

\begin{remark}
The reason why we define the divergence along $h$ as above is the following. 
As in Remark \ref{whyEiisso}, consider the graphical Lagrangian submanifold $L=\{\,(x,\nabla\psi(x))\mid x\in\mathbb{R}^{n}\,\}$. 
Then, its tangent bundle is spanned by $E_{i}$ defined in Remark \ref{whyEiisso}. 
Assume that a vector field $Z=X_{i}(\partial/\partial x^{i})+Y_{i}(\partial/\partial y^{i})$ along $L$ is given. 
Then, the usual divergence of $Z$ along $L$ is given by $\mathop{\mathrm{div}_{L}}Z:=\sum_{i=1}^{n}\nabla_{E_{i}}\langle E_{i},Z\rangle$. 
Expanding the right hand side of this with \eqref{subei}, one can find similarities between it and \eqref{divalongh}. 
\end{remark}

\begin{definition}
For a Hermitian metric $h$ of $L$, we define the {\it mean curvature section}, which is a smooth section of $T^{1,0}X\oplus \Lambda^{0,1}X$, by
\[
\mathcal{H}=\mathcal{H}(\omega,h):=\left(-g^{q\bar{k}}H_{\bar{p}}\eta^{\ell\bar{p}}F_{\bar{k}\ell}\frac{\partial}{\partial z^{q}}\right)
\oplus\left(g_{\bar{q}k}H_{\bar{\ell}}\eta^{k\bar{\ell}}d\bar{z}^{q}\right). 
\]
\end{definition}

The mean curvature section has some nice properties. First, it holds that 
\begin{equation}\label{normofHH}
\begin{aligned}
|\mathcal{H}|^2=&g^{k\bar{k}'}F_{\bar{\ell}k}F_{\bar{k}'\ell'}H_{p}\eta^{p\bar{\ell}}
H_{\bar{p}'}\eta^{\ell'\bar{p}'}
+g_{\bar{\ell}\ell'}H_{p}\eta^{p\bar{\ell}}H_{\bar{p}'}\eta^{\ell'\bar{p}'}\\
=&\left(g_{\bar{\ell}\ell'}+g^{k\bar{k}'}F_{\bar{\ell}k}F_{\bar{k}'\ell'}\right)H_{p}\eta^{p\bar{\ell}}H_{\bar{p}'}\eta^{\ell'\bar{p}'}\\
=&\eta_{\bar{\ell}\ell'}H_{p}\eta^{p\bar{\ell}}H_{\bar{p}'}\eta^{\ell'\bar{p}'}\\
=&\eta^{\ell\bar{q}}H_{\ell}H_{\bar{q}}\\
=&|H^{(1,0)}|^2_{\eta}. 
\end{aligned}
\end{equation}
Second, the mean curvature section of $h$ is normal to the tangent bundle $Th$, that is, $\mathcal{H}^{\top}=0$. It easily follows from
\[
\begin{aligned}
\langle \overline{\mathcal{E}_{i}}, \mathcal{H}\rangle
=&g_{\bar{j}q}\left(-g^{q\bar{k}}H_{\bar{p}}\eta^{\ell\bar{p}}F_{\bar{k}\ell}\right)\delta^{\bar{j}}_{\bar{i}}
+g^{j\bar{q}}\left(g_{\bar{q}k}H_{\bar{\ell}}\eta^{k\bar{\ell}}\right)F_{\bar{i}j}\\
=&-H_{\bar{p}}\eta^{\ell\bar{p}}F_{\bar{i}\ell}
+H_{\bar{\ell}}\eta^{k\bar{\ell}}F_{\bar{i}k}\\
=&0. 
\end{aligned}
\]
In the geometry of submanifolds, it is well-known that the mean curvature vector field of a submanifold 
in a Riemannian manifold is normal, 
and the above property can be considered as an analog of that. 
The following is an analog of the divergence theorem for vector fields along submanifolds. 

\begin{theorem}\label{thm:div}
For any smooth section $\mathcal{Y}$ of $T^{1,0}X\oplus \Lambda^{0,1}X$, it holds that 
\begin{equation}\label{eq:divdivH}
\mathop{\mathrm{div}_{v}}\widetilde{\mathcal{Y}^{\top}}
=\mathop{\mathrm{div}_{h}}\mathcal{Y}+\langle \overline{\mathcal{H}},\mathcal{Y}\rangle. 
\end{equation}
Moreover, on a relatively compact open set $U\subset X$ with piecewise smooth boundary $\partial U$, we have 
\begin{equation}\label{eq:divthm1}
\int_{U} \mathop{\mathrm{div}_{h}}\mathcal{Y} d\mu(h)
=-\int_{U} \langle \overline{\mathcal{H}},\mathcal{Y}\rangle d\mu(h)
+\int_{\partial U} g\left(v\widetilde{\mathcal{Y}^{\top}},\nu\right) d\mu|_{\partial U}. 
\end{equation}
\end{theorem}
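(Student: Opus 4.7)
The plan is to derive the integral statement \eqref{eq:divthm1} from the pointwise identity \eqref{eq:divdivH} together with the weighted divergence theorem \eqref{eq:stddivthm}, so the central work is to prove \eqref{eq:divdivH} by a direct local computation. In holomorphic coordinates, \eqref{YandYtil} gives
\[
v\,\widetilde{\mathcal{Y}^{\top}}^{i} = v\eta^{i\bar{j}}g_{\bar{j}k}Y^{k} + v\eta^{i\bar{j}}g^{\ell\bar{k}}F_{\bar{j}\ell}Y_{\bar{k}}.
\]
Apply $v^{-1}\nabla_{i}$, expand by the product rule, and use K\"ahlerness $\nabla g = 0$. The resulting terms split into three families according to where $\nabla_{i}$ lands: (I) on $Y^{k}$ or $Y_{\bar{k}}$; (II) on the weighted factor $v\eta^{i\bar{j}}$; (III) on the curvature component $F_{\bar{j}\ell}$.

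Family (I) immediately assembles into $\mathop{\mathrm{div}_{h}}\mathcal{Y}$ of \eqref{divalongh}. Family (II) is controlled by the identity
\[
\nabla_{\bar{q}}(\eta^{\ell\bar{q}}|\zeta|) = -H_{\bar{u}}g^{i\bar{u}}F_{\bar{r}i}\eta^{\ell\bar{r}}|\zeta|
\]
already recorded in the proof of Proposition \ref{firstvari}; conjugating it supplies $\nabla_{i}(v\eta^{i\bar{j}})$ in the parallel form and injects contractions of $H$ with $F$ and $\eta$. Family (III) is handled by the K\"ahler identity $\nabla_{i}F_{\bar{j}\ell} = \nabla_{\ell}F_{\bar{j}i}$ together with the fundamental relation $H_{\ell} = \nabla_{\ell}\theta$ and the definition of $\theta$ as a spectral trace of the endomorphism $K$, which converts the $\nabla F$ factor into a further $H$-contribution. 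Using the reality identities $\overline{H_{j}} = H_{\bar{j}}$ and $\overline{F_{\bar{j}k}} = F_{\bar{k}j}$, after relabeling dummy indices, the coefficients of $Y^{k}$ and $Y_{\bar{k}}$ coming from families (II) and (III) line up with the $(1,0)$- and $(0,1)$-components of $\mathcal{H}$ as defined, producing exactly $\langle\overline{\mathcal{H}},\mathcal{Y}\rangle$.

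To finish, integrate \eqref{eq:divdivH} against $d\mu(h) = v\omega^{n}/n!$ over $U$: the left-hand side becomes $\int_{U}\mathop{\mathrm{div}_{v}}\widetilde{\mathcal{Y}^{\top}}\,d\mu(h)$, which by \eqref{eq:stddivthm} applied with $Y = \widetilde{\mathcal{Y}^{\top}}$ equals $\int_{\partial U}g(v\widetilde{\mathcal{Y}^{\top}},\nu)\,d\mu|_{\partial U}$, and rearranging yields \eqref{eq:divthm1}. The main obstacle is the combinatorics inside families (II)+(III): tracking the barred/unbarred conventions and verifying that the combined $H$-contributions really reassemble into precisely the prescribed coefficients of $\mathcal{H}$, rather than something off by a sign, a normalization factor, or an extra trace. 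The geometric analogy with the divergence theorem along a Lagrangian graph (the perspective recorded in Remark \ref{whyEiisso}) gives strong reassurance that this identification should close up cleanly.
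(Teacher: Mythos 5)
Your proposal follows essentially the same route as the paper: expand $v^{-1}\nabla_{i}$ of the coefficient \eqref{eq:used1} by the product rule, recognize the $\nabla Y$ terms as $\mathop{\mathrm{div}_{h}}\mathcal{Y}$, control the remaining terms via the conjugate of the identity $\nabla_{\bar{q}}(\eta^{\ell\bar{q}}|\zeta|)=-H_{\bar{u}}g^{i\bar{u}}F_{\bar{r}i}\eta^{\ell\bar{r}}|\zeta|$ together with $\eta^{i\bar{j}}\nabla_{i}F_{\bar{j}\ell}=H_{\ell}$ and the K\"ahler symmetry of $\nabla F$, and then integrate using \eqref{eq:stddivthm}. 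The one step you flag as the ``main obstacle''---verifying that the leftover contractions assemble into the coefficients of $\mathcal{H}$---is exactly what the paper closes by passing to normal coordinates in which $g_{\bar{k}j}=\delta_{kj}$ and $F_{\bar{k}j}=\lambda_{j}\delta_{kj}$, which reduces the index bookkeeping to a one-line eigenvalue computation.
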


\begin{proof}
We will expand $\mathop{\mathrm{div}_{v}}\widetilde{\mathcal{Y}^{\top}}$ explicitly. 
Since 
\[\langle \overline{\mathcal{E}_{j}},\mathcal{Y}\rangle=g_{\bar{j}k}Y^{k}+g^{\ell\bar{k}}F_{\bar{j}\ell}Y_{\bar{k}}, \]
we have
\begin{equation}\label{eq:used1}
v \eta^{i\bar{j}}\langle \overline{\mathcal{E}_{j}},\mathcal{Y}\rangle
=Y^{k}\left(v \eta^{i\bar{j}}\right)g_{\bar{j}k}+Y_{\bar{k}}\left(v\eta^{i\bar{j}}F_{\bar{j}\ell}\right)g^{\ell\bar{k}}. 
\end{equation}
This is the coefficient of $\partial/\partial z^{i}$ of $v\widetilde{\mathcal{Y}^{\top}}$ by \eqref{YandYtil}. 
Thus, 
\begin{equation}\label{eq:divhAA}
\begin{aligned}
\mathop{\mathrm{div}_{v}}\widetilde{\mathcal{Y}^{\top}}
=&v^{-1}\nabla_{i}\left(v \eta^{i\bar{j}}\langle \overline{\mathcal{E}_{j}},\mathcal{Y}\rangle\right)\\
=&\nabla_{i}Y^{k}\eta^{i\bar{j}}g_{\bar{j}k}+\nabla_{i}Y_{\bar{k}}\eta^{i\bar{j}}F_{\bar{j}\ell}g^{\ell\bar{k}}\\
&+Y^{k}v^{-1}\nabla_{i}\left(v \eta^{i\bar{j}}\right)g_{\bar{j}k}+Y_{\bar{k}}v^{-1}\nabla_{i}\left(v\eta^{i\bar{j}}F_{\bar{j}\ell}\right)g^{\ell\bar{k}}\\
=&\mathop{\mathrm{div}_{h}}\mathcal{Y}
+Y^{k}v^{-1}\nabla_{i}\left(v \eta^{i\bar{j}}\right)g_{\bar{j}k}
+Y_{\bar{k}}v^{-1}\nabla_{i}\left(v\eta^{i\bar{j}}F_{\bar{j}\ell}\right)g^{\ell\bar{k}}\\
=&:\mathop{\mathrm{div}_{h}}\mathcal{Y}+Y^{k}A_{k}+Y_{\bar{k}}B^{\bar{k}}. 
\end{aligned}
\end{equation}
We further compute $A_{k}$ and $B^{\bar{k}}$. 
First, we focus on $A_{k}$. Then, we have
\[
\begin{aligned}
A_{k}=&\nabla_{i}\log v \eta^{i\bar{j}} g_{\bar{j}k}+\nabla_{i}\eta^{i\bar{j}} g_{\bar{j}k}\\
=&\eta^{\ell\bar{m}}F_{\bar{m}p}g^{p\bar{q}}\nabla_{i}F_{\bar{q}\ell} \eta^{i\bar{j}} g_{\bar{j}k}
+\nabla_{i}\eta^{i\bar{j}} g_{\bar{j}k}, 
\end{aligned}
\]
where the second equality follows from the formula appeared just after the equation (5.5) in \cite{JacobYau}. 
Note that
\[
\begin{aligned}
\nabla_{i}\eta^{i\bar{j}}=&-\eta^{i\bar{s}}\eta^{r\bar{j}}\nabla_{i}\eta_{\bar{s}r}\\
=&-\eta^{i\bar{s}}\eta^{r\bar{j}}\nabla_{i}(g_{\bar{s}r}+g^{\bar{q}p}F_{\bar{s}p}F_{\bar{q}r})\\
=&-\eta^{i\bar{s}}\eta^{r\bar{j}}g^{\bar{q}p}\nabla_{i}F_{\bar{s}p}F_{\bar{q}r}-\eta^{i\bar{s}}\eta^{r\bar{j}}g^{\bar{q}p}F_{\bar{s}p}\nabla_{i}F_{\bar{q}r}\\
=&-\eta^{r\bar{j}}g^{\bar{q}p}H_{p}F_{\bar{q}r}-\eta^{i\bar{s}}\eta^{r\bar{j}}g^{\bar{q}p}F_{\bar{s}p}\nabla_{i}F_{\bar{q}r}, 
\end{aligned}
\]
where the final equality follows from the identity $\eta^{i\bar{s}}\nabla_{i}F_{\bar{s}p}=H_{p}$. Thus, 
\begin{equation}\label{A1pre}
\begin{aligned}
A_{k}=&\eta^{\ell\bar{m}}F_{\bar{m}p}g^{p\bar{q}}\nabla_{i}F_{\bar{q}\ell} \eta^{i\bar{j}} g_{\bar{j}k}
-\eta^{r\bar{j}}g^{p\bar{q}}H_{p}F_{\bar{q}r} g_{\bar{j}k}\\
&\quad\quad-\eta^{i\bar{s}}\eta^{r\bar{j}}g^{\bar{q}p}F_{\bar{s}p}\nabla_{i}F_{\bar{q}r} g_{\bar{j}k}. 
\end{aligned}
\end{equation}

Here, to simplify each term, we introduce the so-called {\it normal coordinates}, which are also used in \cite{JacobYau}. 
For a fixed point $p\in X$, the normal coordinates (centered at $p$) are coordinates $(z^{1},\dots,z^{n})$ so that 
$g_{\bar{k}j}=\delta_{kj}$ and $F_{\bar{k}j}=\lambda_{j}\delta_{kj}$ at $p$, 
where $\lambda_{j}$ ($j=1,\dots,n$) are the eigenvalues of $F$. 
Using the normal coordinates, only at $p$, we have
\[
\begin{aligned}
\eta^{\ell\bar{m}}F_{\bar{m}p}g^{p\bar{q}}\nabla_{i}F_{\bar{q}\ell} \eta^{i\bar{j}} g_{\bar{j}k}
=&\sum_{i,\ell=1}^{n}(1+\lambda_{\ell}^2)^{-1}\lambda_{\ell}\nabla_{i}F_{\bar{\ell}\ell} (1+\lambda_{i}^2)^{-1}\delta_{ik},\\
\eta^{i\bar{s}}\eta^{r\bar{j}}g^{\bar{q}p}F_{\bar{s}p}\nabla_{i}F_{\bar{q}r} g_{\bar{j}k}
=&\sum_{\ell,i=1}^{n}(1+\lambda_{\ell}^2)^{-1}(1+\lambda_{i}^2)^{-1}\lambda_{\ell}\nabla_{\ell}F_{\bar{\ell}i}\delta_{ik}. 
\end{aligned}
\]
Moreover, it holds that 
\[
\begin{aligned}
\nabla_{\ell}F_{\bar{\ell}i}
&=\partial_{\ell}(\partial_{\bar{\ell}}\partial_{i}(-\log h))-\Gamma_{\ell i}^{k}\partial_{\bar{\ell}}\partial_{k}(-\log h)\\
=&\partial_{i}(\partial_{\bar{\ell}}\partial_{\ell}(-\log h))-\Gamma_{i \ell}^{k}\partial_{\bar{\ell}}\partial_{k}(-\log h)
=\nabla_{i}F_{\bar{\ell}\ell} 
\end{aligned}
\]
since $(X,\omega)$ is K\"ahler. 
Thus, the first and third term on the right hand side of \eqref{A1pre} cancel each other. 
On the second term of \eqref{A1pre}, by using the normal coordinates, we have
\[
\eta^{r\bar{j}}g^{p\bar{q}}F_{\bar{q}r} g_{\bar{j}k}
=(1+\lambda_{p}^2)^{-1}\lambda_{p}\delta_{p k}
=\eta^{p\bar{\ell}}F_{\bar{\ell}k}. 
\]
These imply that 
\begin{equation}\label{eq:A1}
A_{k}=-H_{p}\eta^{p\bar{\ell}}F_{\bar{\ell}k}. 
\end{equation}
Next, we treat $B^{\bar{k}}$. Then, we have 
\[
B^{\bar{k}}=v^{-1}\nabla_{i}\left(v\eta^{i\bar{j}}\right)F_{\bar{j}\ell}g^{\ell\bar{k}}
+\eta^{i\bar{j}}\nabla_{i}F_{\bar{j}\ell}g^{\ell\bar{k}}. 
\]
Note that, by a consequence of the computation of $A_{k}$, we have shown that 
\begin{equation}\label{used2}
v^{-1}\nabla_{i}\left(v\eta^{i\bar{j}}\right)=-\eta^{r\bar{j}}g^{p\bar{q}}H_{p}F_{\bar{q}r}. 
\end{equation}
Combining these with the general identity $\eta^{i\bar{j}}\nabla_{i}F_{\bar{j}\ell}=H_{\ell}$ yields that 
\[
B^{\bar{k}}=-\eta^{r\bar{j}}g^{p\bar{q}}H_{p}F_{\bar{q}r}F_{\bar{j}\ell}g^{\ell\bar{k}}+H_{\ell}g^{\ell\bar{k}}
=H_{\ell}\left(g^{\ell\bar{k}}-\eta^{r\bar{j}}g^{\ell\bar{q}}F_{\bar{q}r}F_{\bar{j}p}g^{p\bar{k}}\right). 
\]
By using the normal coordinates, one can see that 
\[
g^{\ell\bar{k}}-\eta^{r\bar{j}}g^{\ell\bar{q}}F_{\bar{q}r}F_{\bar{j}p}g^{p\bar{k}}
=\delta_{\ell k}-(1+\lambda_{\ell}^2)^{-1}\lambda_{\ell}^2\delta_{\ell k}
=(1+\lambda_{\ell}^2)^{-1}\delta_{\ell k}
=\eta^{\ell\bar{k}}, 
\]
and this implies that
\begin{equation}\label{eq:A2}
B^{\bar{k}}=H_{\ell}\eta^{\ell\bar{k}}. 
\end{equation}
Then, substituting \eqref{eq:A1} and \eqref{eq:A2} into \eqref{eq:divhAA} yields 
\[
\begin{aligned}
\mathop{\mathrm{div}_{v}}\widetilde{\mathcal{Y}^{\top}}
=&\mathop{\mathrm{div}_{h}}\mathcal{Y}-Y^{k}H_{p}\eta^{p\bar{\ell}}F_{\bar{\ell}k}+Y_{\bar{k}}H_{\ell}\eta^{\ell\bar{k}}\\
=&\mathop{\mathrm{div}_{h}}\mathcal{Y}-g_{\bar{q}i}Y^{i}\overline{g^{q\bar{k}}H_{\bar{p}}\eta^{\ell\bar{p}}F_{\bar{k}\ell}}
+g^{q\bar{i}}Y_{\bar{i}}\overline{g_{\bar{q}k}H_{\bar{\ell}}\eta^{k\bar{\ell}}}\\
=&\mathop{\mathrm{div}_{h}}\mathcal{Y}+\langle \overline{\mathcal{H}},\mathcal{Y}\rangle, 
\end{aligned}
\]
and this is the first desired formula \eqref{eq:divdivH}. 
Integrating both hand side of \eqref{eq:divdivH} with the divergence theorem \eqref{eq:stddivthm} 
deduces the second desired formula \eqref{eq:divthm1}. 
\end{proof}

\begin{remark}
Theorem \eqref{thm:div} can be considered as an analog of the divergence formula for a submanifold, which is also called the first variation formula. 
Actually, for a submanifold $L$ in a Riemannian manifold $(M,g)$ 
and a section $V$ of $TM$ along $L$ with compact support, 
it holds that 
\[
\int_{L} \mathop{\mathrm{div}_{L}} V d\mu_{L}=-\int_{L} g(H,V) d\mu_{L}, 
\]
where $\mathop{\mathrm{div}_{L}}$ is the divergence of $V$ along $L$, $H$ is the mean curvature vector field of $L$ and $d\mu_{L}$ is the induced measure on $L$. 
\end{remark}

\subsection{An application of the divergence theorem}
In this subsection, we give an application of the divergence formula \eqref{eq:divthm1}. 
Recall that $(X,g)$ is a given K\"ahler manifold with $\mathrm{dim}_{\mathbb{C}}X=n$ 
and $\pi:L\to X$ is a holomorphic line bundle. 
Recall that we introduced special conditions for $(X,g)$, called the semi-flat condition in Definition \ref{graphical}, 
and for $(L,h)$, called the graphical condition in Definition \ref{graphical2}. 
We also remark that from the former condition in \eqref{lsfc} it follows that 
\begin{equation}\label{gxiyj0}
g\left(\frac{\partial}{\partial x^{i}}, \frac{\partial}{\partial y^{j}}\right)=0
\end{equation}
for all $1\leq i,j\leq n$. 

\begin{definition}
Assume that $(X,g)$ is locally semi-flat on $U\subset X$ with the coordinates $(z^{1},\dots,z^{n})$ 
induced from $\varphi:B(r)\times B(r')\to U$ and $(L,h)$ is graphical with respect to a section $e\in\Gamma(U,L)$. 
Then, we define a smooth function $\phi:U\to \mathbb{R}$ by 
\[\phi:=-\log h(\bar{e},e). \]
Put $U_{\delta}:=\varphi(B(\delta)\times B(r'))$ for $\delta\in(0,r]$, 
where the radius of the first component is changed and the second one is fixed. 
Then, for $p\in U_{r/4}$, we define a smooth section of $T^{1,0}X\oplus \Lambda^{0,1}X$ over $U_{3r/4}$ by 
\[
\mathcal{P}_{p}:=\left(2(x^{k}-x_{0}^{k})\frac{\partial}{\partial z^{k}}\right)\oplus \left(\frac{1}{2}\frac{\partial \phi}{\partial x^{k}}d\bar{z}^{k}\right), 
\]
where $x_{0}^{k}$ are the coordinates of the $B(r/4)$-component of $\varphi^{-1}(p)\in B(r/4)\times B(r')$. 
We call $\mathcal{P}_{p}$ the {\it position section} of $h$ centered at $p$ and usually omit the subscript $p$. 
\end{definition}

\begin{definition}
For a smooth function $f:X\to \mathbb{C}$, we define a differential operator $\mathcal{D}$ by
\[
\mathcal{D}f:=\left( \nabla_{\bar{i}}f\eta^{j\bar{i}}\frac{\partial}{\partial z^{j}}\right)
\oplus\left(\nabla_{\bar{i}}f\eta^{\bar{i}j}F_{\bar{\ell}j}d\bar{z}^{\ell}\right). 
\]
\end{definition}

It is clear that $\mathcal{D}f$ is a smooth section of $T^{1,0}X\oplus \Lambda^{0,1}X$ and satisfies 
\[
\mathcal{D}(f_{1}f_{2})=f_{1}\mathcal{D}f_{2}+f_{2}\mathcal{D}f_{1}. 
\]

\begin{lemma}
A position section $\mathcal{P}$ and a smooth function $f$ on $U$ satisfy 
\begin{equation}\label{eq:dvhfP2}
\mathop{\mathrm{div}_{h}}(f\mathcal{P})=\langle \overline{\mathcal{D}f},\mathcal{P}\rangle+nf. 
\end{equation}
\end{lemma}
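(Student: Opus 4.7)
Plan: The strategy is to apply a product-rule expansion to $\mathop{\mathrm{div}_{h}}(f\mathcal{P})$ and then identify the two resulting pieces with the terms on the right-hand side of \eqref{eq:dvhfP2}.

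First, I would use the derivation property of $\nabla_{i}$ applied to each component of $f\mathcal{P}$. Since $f$ is a smooth function, $\nabla_{i}f=\partial_{i}f$, and the definition \eqref{divalongh} of $\mathop{\mathrm{div}_{h}}$ yields
\[
\mathop{\mathrm{div}_{h}}(f\mathcal{P})=f\,\mathop{\mathrm{div}_{h}}(\mathcal{P})+\nabla_{i}f\,\bigl[\,P^{k}\eta^{i\bar{j}}g_{\bar{j}k}+P_{\bar{k}}\eta^{i\bar{j}}F_{\bar{j}\ell}g^{\ell\bar{k}}\,\bigr].
\]

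Second, I would show that the bracketed $\nabla_{i}f$-term equals $\langle\overline{\mathcal{D}f},\mathcal{P}\rangle$. Unwinding the definitions of $\mathcal{D}f$ and of the Hermitian pairing $\langle\overline{\cdot},\cdot\rangle$ and using the symmetries $\overline{\eta^{j\bar{i}}}=\eta^{i\bar{j}}$, $\overline{F_{\bar{\ell}j}}=F_{\bar{j}\ell}$, and $\overline{\nabla_{\bar{i}}f}=\nabla_{i}f$, one obtains $\langle\overline{\mathcal{D}f},\mathcal{P}\rangle=\nabla_{i}f\cdot\eta^{i\bar{j}}\langle\overline{\mathcal{E}_{j}},\mathcal{P}\rangle$, which coincides with the above bracket since $\langle\overline{\mathcal{E}_{j}},\mathcal{P}\rangle=g_{\bar{j}k}P^{k}+g^{\ell\bar{k}}F_{\bar{j}\ell}P_{\bar{k}}$ by definition.

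Third, it remains to verify $\mathop{\mathrm{div}_{h}}(\mathcal{P})=n$, which is the $f\equiv 1$ case of the lemma. Writing $P^{k}=z^{k}+\bar{z}^{k}-2x_{0}^{k}$ gives $\partial_{i}P^{k}=\delta^{k}_{i}$. The graphical condition $\partial_{y^{k}}\phi=0$ yields $F_{\bar{k}i}=\partial_{i}\partial_{\bar{k}}\phi=\tfrac{1}{4}\phi_{ik}$, which matches $\partial_{i}P_{\bar{k}}=\tfrac{1}{4}\phi_{ik}$; combined with the K\"ahler identity $\nabla_{i}d\bar{z}^{k}=0$, this promotes the partial derivative to $\nabla_{i}P_{\bar{k}}=F_{\bar{k}i}$. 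Substituting and grouping, the two principal contributions combine via the algebraic identity
\[
\eta^{i\bar{j}}\bigl(g_{\bar{j}i}+F_{\bar{j}\ell}g^{\ell\bar{k}}F_{\bar{k}i}\bigr)=\eta^{i\bar{j}}\eta_{\bar{j}i}=n,
\]
which follows at once from the definition \eqref{defofeta} of $\eta$.

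The main obstacle lies in step three, specifically in the careful handling of the Chern Christoffel contribution appearing in $\nabla_{i}P^{k}=\delta^{k}_{i}+2\Gamma^{k}_{i\ell}(x^{\ell}-x_{0}^{\ell})$: after the principal $\delta^{k}_{i}$-piece is accounted for, the remaining $\Gamma$-term contracted with $\eta^{i\bar{j}}g_{\bar{j}k}$ must be absorbed in the algebra. Here the semi-flat hypothesis---which forces $g_{\bar{k}j}$ to be real and independent of the $y^{k}$---combined with the K\"ahler identity $\partial_{i}g_{\bar{k}j}=\partial_{j}g_{\bar{k}i}$, renders the three-index derivative $\partial_{x^{i}}g_{\bar{j}k}$ fully symmetric in $i,j,k$. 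This symmetry, together with the affine form of $P^{\ell}$ in the semi-flat coordinates, is the structural input needed to guarantee the cancellation that forces $\mathop{\mathrm{div}_{h}}(\mathcal{P})$ to equal exactly $n$ and completes the proof.
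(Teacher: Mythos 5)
Your first three steps reproduce the paper's proof of this lemma essentially verbatim: the paper expands $\mathop{\mathrm{div}_{h}}(f\mathcal{P})$ from the definition \eqref{divalongh}, uses $\nabla_{i}(x^{k}-x_{0}^{k})=\tfrac12\delta_{ik}$ and $\nabla_{i}\nabla_{\bar{k}}\phi=F_{\bar{k}i}$ (both consequences of the graphical condition $\partial_{y^{k}}\phi=0$), collects the $\nabla_{i}f$-terms into $\langle\overline{\mathcal{D}f},\mathcal{P}\rangle$, and closes with $\eta^{i\bar{j}}\bigl(g_{\bar{j}i}+F_{\bar{k}i}F_{\bar{j}\ell}g^{\ell\bar{k}}\bigr)=\eta^{i\bar{j}}\eta_{\bar{j}i}=n$. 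Apart from organizing the product rule so that $\mathop{\mathrm{div}_{h}}\mathcal{P}=n$ is isolated as the $f\equiv1$ case, there is no difference in route.

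The gap is in your final paragraph. You are right that if $\nabla_{i}$ in \eqref{divalongh} is read as the Levi--Civita covariant derivative of the vector-field component $P^{k}=2(x^{k}-x_{0}^{k})$, then a term $2\Gamma^{k}_{i\ell}(x^{\ell}-x_{0}^{\ell})\eta^{i\bar{j}}g_{\bar{j}k}=2(x^{\ell}-x_{0}^{\ell})\eta^{i\bar{j}}\partial_{i}g_{\bar{j}\ell}$ appears; but your proposed resolution is a non-sequitur. Total symmetry of $\partial_{x^{i}}g_{\bar{j}k}$ in $i,j,k$ (which does follow from semi-flatness plus the K\"ahler condition) does not make the contraction $(x^{\ell}-x_{0}^{\ell})\,\eta^{i\bar{j}}\partial_{i}g_{\bar{j}\ell}$ vanish: $\eta^{i\bar{j}}$ is a positive Hermitian matrix, not antisymmetric, and already for $n=1$ with $g_{\bar{1}1}=u(x)$ non-constant the term is $(x-x_{0})\,\eta^{1\bar{1}}u'(x)\neq0$. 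So the asserted cancellation is not established, and under your reading the identity would pick up the extra term $2f(x^{\ell}-x_{0}^{\ell})\eta^{i\bar{j}}\partial_{i}g_{\bar{j}\ell}$. Note that the paper's own proof never confronts this: it simply computes $\nabla_{i}(x^{k}-x_{0}^{k})=\tfrac12\delta_{ik}$, i.e.\ differentiates the component as a scalar, so no Christoffel contribution is ever written down. To close your argument you must either adopt that reading of $\mathop{\mathrm{div}_{h}}$ consistently, or work where the semi-flat metric has constant coefficients (as it does in the settings where the lemma is ultimately applied, e.g.\ Sections \ref{sec:self-shrinker} and \ref{proofofmainthm}), rather than appeal to a symmetry that does not do the job.
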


\begin{proof}
Since $\partial \phi/\partial y^{k}=0$ for all $k$ by \eqref{lsfc2}, we have 
\[
\frac{1}{2}\frac{\partial\phi}{\partial x^{k}}=\nabla_{\bar{k}}\phi
\quad\mbox{and}\quad
\nabla_{i}\frac{\partial \phi}{\partial x^{k}}=\nabla_{\bar{i}}\frac{\partial \phi}{\partial x^{k}}=2F_{\bar{i}k}\,\,(=2F_{\bar{k}i}). 
\]
By the definition of $\mathop{\mathrm{div}_{h}}$, see \eqref{eq:divhAA}, 
and noting 
\[\nabla_{i}(x^{k}-x_{0}^{k})=\frac{1}{2}\left(\frac{\partial}{\partial x^{i}}-\sqrt{-1}\frac{\partial}{\partial y^{i}}\right)x^{k}=\frac{1}{2}\delta_{ik},\] 
we have
\begin{equation}\label{used3}
\begin{aligned}
\mathop{\mathrm{div}_{h}}(f\mathcal{P})=&2\nabla_{i}\left(f(x^{k}-x_{0}^{k})\right)\eta^{i\bar{j}}g_{\bar{j}k}
+\nabla_{i}\left(f\nabla_{\bar{k}}\phi\right)\eta^{i\bar{j}}F_{\bar{j}\ell}g^{\ell\bar{k}}\\
=&\left(2\nabla_{i}f(x^{k}-x_{0}^{k})+f\delta_{ik}\right)\eta^{i\bar{j}}g_{\bar{j}k}
+\left(\nabla_{i}f\nabla_{\bar{k}}\phi+fF_{\bar{k}i}\right)\eta^{i\bar{j}}F_{\bar{j}\ell}g^{\ell\bar{k}}\\
=&2\nabla_{i}f\eta^{i\bar{j}}g_{\bar{j}k}(x^{k}-x_{0}^{k})+\nabla_{i}f\eta^{i\bar{j}}F_{\bar{j}\ell}g^{\ell\bar{k}}\nabla_{\bar{k}}\phi
+f\eta^{i\bar{j}}\left(g_{\bar{j}i}+F_{\bar{k}i}F_{\ell\bar{j}}g^{\ell\bar{k}}\right)\\
=&\langle \overline{\mathcal{D}f},\mathcal{P}\rangle+fn, 
\end{aligned}
\end{equation}
where the last equality follows from $g_{\bar{j}i}+F_{\bar{k}i}F_{\bar{j}\ell}g^{\ell\bar{k}}=\eta_{\bar{j}i}$. 
\end{proof}

\begin{lemma}
A position section $\mathcal{P}$ satisfy 
\begin{equation}\label{eq:PDPtop2}
\langle \overline{\mathcal{D}|\mathcal{P}|^2},\mathcal{P}\rangle=2|\mathcal{P}^{\top}|^2. 
\end{equation}
\end{lemma}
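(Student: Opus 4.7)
The plan is to prove the identity by mimicking the classical submanifold computation $\langle \nabla_L|X|^2, X\rangle = 2|X^\top|^2$, with $\mathcal{P}$ playing the role of the position vector and $\mathcal{D}$ that of the tangential gradient along $h$. The target intermediate statement is $\mathcal{D}|\mathcal{P}|^2 = 2\mathcal{P}^\top$ as a section of $Th$; since $\mathcal{P} = \mathcal{P}^\top + \mathcal{P}^\perp$ and anything in $Th$ is orthogonal to $\mathcal{P}^\perp$, pairing both sides with $\mathcal{P}$ then yields \eqref{eq:PDPtop2} at once via $\langle \overline{\mathcal{P}^\top}, \mathcal{P}\rangle = |\mathcal{P}^\top|^2$.

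First, I would make explicit that $\mathcal{D}f \in \Gamma(Th)$ by rewriting $\mathcal{D}f = \eta^{j\bar{i}}(\partial_{\bar{i}}f)\mathcal{E}_j$; this form also exposes the direct comparison with $\mathcal{P}^\top = \eta^{i\bar{j}}V_j\mathcal{E}_i$, where $V_i := \langle \overline{\mathcal{E}_i}, \mathcal{P}\rangle = 2g_{\bar{i}k}s^k + \tfrac{1}{2}g^{\ell\bar{k}}F_{\bar{i}\ell}\psi_k$, with $s^k := x^k - x_0^k$ and $\psi_k := \partial\phi/\partial x^k$. So $\mathcal{D}|\mathcal{P}|^2 = 2\mathcal{P}^\top$ reduces to the coefficient-level identity $\partial_{\bar{i}}|\mathcal{P}|^2 = 2V_i$. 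The semi-flat and graphical hypotheses make both $g_{\bar{j}k}$ and $\phi$ independent of $y$, so $|\mathcal{P}|^2 = 4g_{\bar{j}k}s^j s^k + \tfrac{1}{4}g^{j\bar{k}}\psi_j\psi_k$ is also $y$-independent and hence $\partial_{\bar{i}}|\mathcal{P}|^2 = \tfrac{1}{2}\partial_{x^i}|\mathcal{P}|^2$. A product-rule expansion using $\partial_{x^i}s^k = \delta^k_i$, the semi-flat symmetry $g_{\bar{j}k} = g_{\bar{k}j}$, and $\partial_{x^i}\psi_k = 4F_{\bar{i}k}$ (which follows from $F_{\bar{k}j} = \tfrac{1}{4}\partial_{x^j}\partial_{x^k}\phi$ in these coordinates) produces the principal contributions $4g_{\bar{i}k}s^k$ and $g^{j\bar{k}}F_{\bar{i}j}\psi_k$, which assemble into exactly $2V_i$.

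The main obstacle is the disposition of the residual terms $2(\partial_{x^i}g_{\bar{j}k})s^j s^k$ and $\tfrac{1}{8}(\partial_{x^i}g^{j\bar{k}})\psi_j\psi_k$ left over in the expansion. The key tool here is the K\"ahler identity $\partial_i g_{\bar{k}j} = \partial_j g_{\bar{k}i}$, which combined with the semi-flat symmetry $g_{\bar{j}k}=g_{\bar{k}j}$ implies that $T_{ijk} := \partial_{x^i}g_{\bar{j}k}$ is totally symmetric in $(i,j,k)$; using $\partial_{x^i}g^{j\bar{k}} = -g^{j\bar{m}}g^{n\bar{k}}T_{imn}$ one can rewrite both residuals in terms of $T$ alone. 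After pairing with $\mathcal{P}$, the residual contribution becomes $P^i\bigl[2T_{ijk}s^j s^k - \tfrac{1}{8}T_{iab}u^a u^b\bigr]$ with $u^a := g^{a\bar{k}}\psi_k$ and $P^i := \eta^{i\bar{j}}V_j$, and the expectation is that the total symmetry of $T$, together with the explicit form of $P^i$ and the relation $\psi_k = 2\nabla_{\bar{k}}\phi$ linking $\psi$ to $F$, forces this bracketed expression to vanish. Establishing this cancellation is the technical crux of the argument.
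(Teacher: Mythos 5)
Your reduction is the paper's computation in different clothing: the observation that $\mathcal{D}f=\eta^{j\bar{i}}(\partial_{\bar{i}}f)\,\mathcal{E}_{j}\in\Gamma(Th)$, the coefficient identity $\partial_{\bar{i}}|\mathcal{P}|^2=2\langle\overline{\mathcal{E}_{i}},\mathcal{P}\rangle$, and the final contraction against $\eta^{i\bar{j}}$ are exactly \eqref{dpsruare}, \eqref{ptopexp} and the last line of the paper's proof, and your principal terms $4g_{\bar{i}k}s^{k}$ and $g^{j\bar{k}}F_{\bar{i}j}\psi_{k}$ assemble into $2V_{i}$ just as there. The difference lies entirely in the residual metric-derivative terms, and there your argument has a genuine gap: you defer the vanishing of $P^{i}\bigl[2T_{ijk}s^{j}s^{k}-\tfrac{1}{8}T_{iab}u^{a}u^{b}\bigr]$ to a hoped-for symmetry argument, but this quantity does not vanish in general. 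Take $n=1$, $\phi\equiv 0$ and $g_{\bar{1}1}=e^{x}$ (semi-flat: symmetric and $y$-independent). Then $F=0$, $u=0$, $\mathcal{P}=2x\,\partial_{z}$ is already tangential, $P^{1}=2x$, $T_{111}=e^{x}$, and the residual contribution is $4e^{x}x^{3}\neq 0$; a direct check gives $\langle\overline{\mathcal{D}|\mathcal{P}|^2},\mathcal{P}\rangle=8x^{2}e^{x}+4x^{3}e^{x}$ while $2|\mathcal{P}^{\top}|^{2}=8x^{2}e^{x}$. So the total symmetry of $T_{ijk}$ cannot rescue the step, and the cancellation you are counting on is not available.

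For comparison, the paper's derivation of \eqref{dpsruare} never generates these terms: it differentiates only the explicit $(x-x_{0})$- and $\phi$-dependence of $|\mathcal{P}|^{2}$, in effect treating the coefficients $g_{\bar{j}k}$ as constant in $x$ (which is how the metric does appear in the blow-up limit of the final section and in the self-shrinker section, but is not implied by \eqref{lsfc}). Your more careful bookkeeping has correctly isolated the one point where the identity requires more than the stated semi-flat hypothesis; but as a proof of the lemma as stated your argument is incomplete, and the route you propose for closing it fails. To recover the paper's conclusion you must either add the hypothesis $\partial_{x^{i}}g_{\bar{j}k}=0$ (so the residual is absent from the start) or carry the residual as an explicit error term.
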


\begin{proof}
Since 
\begin{equation}\label{psquare}
|\mathcal{P}|^2=4g_{\bar{p}q}(x^{p}-x_{0}^{p})(x^{q}-x_{0}^{q})+g^{p\bar{q}}\nabla_{p}\phi \nabla_{\bar{q}}\phi, 
\end{equation}
we have 
\begin{equation}\label{dpsruare}
\begin{aligned}
\nabla_{i}|\mathcal{P}|^2
=&2g_{\bar{i}p}(x^{p}-x_{0}^{p})+2g_{\bar{p}i}(x^{p}-x_{0}^{p})+\nabla_{i}(\nabla_{q}\phi\nabla_{\bar{p}}\phi g^{q\bar{p}})\\
=&2g_{\bar{i}p}(x^{p}-x_{0}^{p})+2g_{\bar{p}i}(x^{p}-x_{0}^{p})+\nabla_{i}\nabla_{q}\phi\nabla_{\bar{p}}\phi g^{q\bar{p}}+\nabla_{q}\phi F_{\bar{p}i} g^{q\bar{p}}\\
=&2g_{\bar{i}p}(x^{p}-x_{0}^{p})+2g_{\bar{p}i}(x^{p}-x_{0}^{p})+F_{\bar{q}i}\nabla_{\bar{p}}\phi g^{q\bar{p}}+\nabla_{q}\phi F_{\bar{p}i} g^{q\bar{p}}\\
=&2\left(2g_{\bar{p}i}(x^{p}-x_{0}^{p})+\nabla_{q}\phi F_{\bar{p}i} g^{q\bar{p}}\right), 
\end{aligned}
\end{equation}
where we used the condition \eqref{lsfc} and \eqref{lsfc2} several times. 
Thus, 
\[
\begin{aligned}
\langle\overline{\mathcal{D}|\mathcal{P}|^2},\mathcal{P}\rangle
=&2\left(2g_{\bar{p}i}(x^{p}-x_{0}^{p})+\nabla_{q}\phi F_{\bar{p}i} g^{q\bar{p}}\right)2(x^{k}-x_{0}^{k})\eta^{i\bar{j}}g_{\bar{j}k}\\
&+2\left(2g_{\bar{p}i}(x^{p}-x_{0}^{p})+\nabla_{q}\phi F_{\bar{p}i} g^{q\bar{p}}\right)\nabla_{\bar{k}}\phi\eta^{i\bar{j}}F_{\bar{j}\ell}g^{\ell\bar{k}}\\
=&2\left(2g_{\bar{p}i}(x^{p}-x_{0}^{p})+\nabla_{q}\phi F_{\bar{p}i} g^{q\bar{p}}\right)\eta^{i\bar{j}}
\left(2(x^{k}-x_{0}^{k})g_{\bar{j}k}+\nabla_{\bar{k}}\phi F_{\bar{j}\ell}g^{\ell\bar{k}}\right). 
\end{aligned}
\]
On the other hand, one can easily see that 
\begin{equation}\label{ptopexp}
\mathcal{P}^{\top}=\eta^{i\bar{j}}\langle \overline{\mathcal{E}_{j}},\mathcal{P}\rangle \mathcal{E}_{i}
=\eta^{i\bar{j}}\left(2(x^{k}-x_{0}^{k})g_{\bar{j}k}+\nabla_{\bar{k}}\phi F_{\bar{j}\ell}g^{\ell\bar{k}}\right)\mathcal{E}_{i} 
\end{equation}
by \eqref{eq:used1} without $v$. Then, since $\langle \overline{\mathcal{E}_{i}},\mathcal{E}_{\ell}\rangle=\eta_{\bar{i}\ell}$, 
the desired identity holds. 
\end{proof}

The following is the application of the divergence formula \eqref{eq:divthm1}. 

\begin{theorem}\label{corofdivthm}
Assume that $(X,g)$ is semi-flat on $U\subset X$ with the coordinates $(z^{1},\dots,z^{n})$ 
induced from $\varphi:B(r)\times B(r')\to U$ and $(L,h)$ is graphical with respect to a section $e\in\Gamma(U,L)$. 
Fix $p\in U_{r/4}$ and let $\mathcal{P}:=\mathcal{P}_{p}$ be the position 
section of $h$ centered at $p$. 
Then, for any smooth function $f:U\to\mathbb{R}$ with 
\begin{equation}\label{condiforcuttoff}
(\mathrm{a})\quad\frac{\partial f}{\partial y^{k}}=0 \quad \mbox{and} \quad(\mathrm{b})\quad \mathop{\mathrm{supp}}f(\,\cdot\,,0)\Subset B(r) 
\end{equation}
and a constant $\alpha\in\mathbb{R}$, it holds that 
\[
\int_{U} \left(n+ \langle \overline{\mathcal{H}}, \mathcal{P}\rangle +2\alpha\left|\mathcal{P}^{\top}\right|^2\right) f\varphi d\mu(h)
=-\int_{U}\langle\overline{\mathcal{D}f},\mathcal{P}\rangle\varphi d\mu(h), 
\]
where $\varphi:=\exp(\alpha|\mathcal{P}|^2):U_{3r/4}\to\mathbb{R}$. 
\end{theorem}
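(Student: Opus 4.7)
My approach is to apply the divergence theorem \eqref{eq:divthm1} to the single section $\mathcal{Y}:=f\varphi\,\mathcal{P}$ of $T^{1,0}X\oplus\Lambda^{0,1}X$ and then to expand $\mathop{\mathrm{div}_{h}}\mathcal{Y}$ explicitly by combining the two auxiliary formulas \eqref{eq:dvhfP2} and \eqref{eq:PDPtop2} with the Leibniz rule that $\mathcal{D}$ obviously satisfies.

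The first step is the algebraic expansion. Applying \eqref{eq:dvhfP2} with $f$ replaced by the product $f\varphi$ gives
\[
\mathop{\mathrm{div}_{h}}(f\varphi\,\mathcal{P})=\langle\overline{\mathcal{D}(f\varphi)},\mathcal{P}\rangle+nf\varphi.
\]
The Leibniz rule yields $\mathcal{D}(f\varphi)=\varphi\,\mathcal{D}f+f\,\mathcal{D}\varphi$, and the chain rule applied to $\varphi=\exp(\alpha|\mathcal{P}|^2)$ gives $\mathcal{D}\varphi=\alpha\varphi\,\mathcal{D}|\mathcal{P}|^2$. Substituting these and then invoking \eqref{eq:PDPtop2} to rewrite $\langle\overline{\mathcal{D}|\mathcal{P}|^2},\mathcal{P}\rangle=2|\mathcal{P}^{\top}|^2$, I obtain
\[
\mathop{\mathrm{div}_{h}}(f\varphi\,\mathcal{P})=\varphi\langle\overline{\mathcal{D}f},\mathcal{P}\rangle+2\alpha f\varphi|\mathcal{P}^{\top}|^2+nf\varphi.
\]
Plugging this into \eqref{eq:divthm1} and rearranging reproduces precisely the target identity, provided the boundary integral $\int_{\partial U}g(v\,\widetilde{(f\varphi\mathcal{P})^{\top}},\nu)\,d\mu|_{\partial U}$ vanishes.

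The vanishing of this boundary integral is where I expect the main obstacle, and it is where the semi-flat and graphical hypotheses enter. The boundary $\partial U$ decomposes, up to a set of measure zero, into the lateral piece $\varphi(\partial B(r)\times B(r'))$ and the vertical piece $\varphi(B(r)\times\partial B(r'))$. On the lateral piece, $f$ vanishes by the support condition (b) combined with the $y$-independence (a), so the integrand is identically zero. On the vertical piece, the outer unit normal $\nu$ lies in the span of the $\partial/\partial y^{k}$, while \eqref{ptopexp} together with the semi-flat condition \eqref{lsfc} and the graphical condition \eqref{lsfc2} forces the coefficients of $\widetilde{\mathcal{P}^{\top}}=V^{i}\partial/\partial z^{i}$ to be real functions of the $x^{k}$-variables alone. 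Consequently the associated real vector field $\widetilde{\mathcal{P}^{\top}}+\overline{\widetilde{\mathcal{P}^{\top}}}=V^{i}\partial/\partial x^{i}$ is purely $x$-directional and, by \eqref{gxiyj0}, is $g$-orthogonal to $\nu$. Hence the integrand vanishes pointwise on the vertical piece as well, the boundary contribution disappears, and the claimed identity follows.
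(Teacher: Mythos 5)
Your proposal is correct and follows essentially the same route as the paper: the same expansion of $\mathop{\mathrm{div}_{h}}(f\varphi\mathcal{P})$ via \eqref{eq:dvhfP2}, the Leibniz rule for $\mathcal{D}$, and \eqref{eq:PDPtop2}, followed by the divergence formula \eqref{eq:divthm1} and the same decomposition of $\partial U$. The only (cosmetic) difference is on the vertical piece $B(r)\times\partial B(r')$: you show directly that the real part of the boundary integrand vanishes pointwise because the coefficients of $\widetilde{\mathcal{P}^{\top}}$ are real and $x$-dependent only and $\nu$ is $g$-orthogonal to the $\partial/\partial x^{i}$ by \eqref{gxiyj0}, whereas the paper observes that the complex-bilinear pairing there is pure imaginary and concludes it must vanish since the remaining terms of \eqref{divapp2} are real --- two phrasings of the same fact.
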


\begin{proof}
It follows from \eqref{eq:dvhfP2} and \eqref{eq:PDPtop2} that 
\[
\begin{aligned}
\mathop{\mathrm{div}_{h}}(f\varphi\mathcal{P})
=&\langle\overline{\mathcal{D}(f\varphi}),\mathcal{P}\rangle+nf\varphi\\
=&\alpha\langle\overline{\mathcal{D}|\mathcal{P}|^2},\mathcal{P}\rangle f\varphi
+\langle\overline{\mathcal{D}f},\mathcal{P}\rangle\varphi+nf\varphi\\
=&2\alpha\left|\mathcal{P}^{\top}\right|^2 f\varphi+\langle\overline{\mathcal{D}f},\mathcal{P}\rangle\varphi+nf\varphi. 
\end{aligned}
\]
Then, by the divergence formula \eqref{eq:divthm1}, we have
\begin{equation}\label{divapp2}
\begin{aligned}
-\int_{U} \langle \overline{\mathcal{H}}, f\varphi\mathcal{P}\rangle d\mu(h)
=&\int_{U} \mathop{\mathrm{div}_{h}}(f\varphi\mathcal{P}) d\mu(h)
-\int_{\partial U}g\left(vf\varphi\widetilde{\mathcal{P}^{\top}},\nu\right)d\mu|_{\partial U}\\
=& \int_{U}\left(2\alpha\left|\mathcal{P}^{\top}\right|^2f\varphi+\langle\overline{\mathcal{D}f},\mathcal{P}\rangle\varphi+nf\varphi\right)d\mu(h)\\
&-\int_{\partial U}g\left(vf\varphi\widetilde{\mathcal{P}^{\top}},\nu\right)d\mu|_{\partial U}. 
\end{aligned}
\end{equation}
We can prove that the last term is actually $0$ as follows. 
First, $\partial U$ is the union of $(\partial B(r))\times B(r')$ and $B(r)\times (\partial B(r'))$, 
and the integral over $(\partial B(r))\times B(r')$ is $0$ by $f|_{(\partial B(r))\times B(r')}\equiv 0$. 
Next, it is easy to see that the integral over $B(r)\times (\partial B(r'))$ is pure imaginary 
since $\nu$ is written as $\nu=\nu^{i}(\partial/\partial y^{i})$ (for some $\nu^{i}\in\mathbb{R}$) and \eqref{gxiyj0}. 
On the other hand, one can easily prove that $\langle \overline{\mathcal{H}}, f\varphi\mathcal{P}\rangle$ 
and $\mathop{\mathrm{div}_{h}}(f\varphi\mathcal{P})$ in \eqref{divapp2} are real valued functions by assumptions. 
Then, by the first equality of $\eqref{divapp2}$, the last term of it should be $0$. 
This gives the desired equality. 
\end{proof}
\section{Monotonicity formula}\label{sec:monoform}
In this section, we give a monotonicity formula and density for line bundle mean curvature flows. 
This is an analog given by Huisken \cite{Huisken} for mean curvature flows. 
The proof of our monotonicity formula based on Theorem \ref{corofdivthm}. 

As in the previous sections, 
let $(X,g)$ be a K\"ahler manifold with $\dim_{\mathbb{C}}X=n$ and let $\pi:L\to X$ be a holomorphic line bundle. 
Assume that $h=\{\,h_{t}\,\}_{t\in[0,T)}$ is a line bundle mean curvature flow of $L$. 
We further assume that $(X,g)$ is semi-flat on $U\subset X$ with the coordinates $(z^{1},\dots,z^{n})$ 
induced from $\varphi:B(r)\times B(r')\to U$ and $(L,h)$ is graphical with respect to a section $e\in\Gamma(U,L)$. 
Fix $T'\in (0,T)$ and a smooth function $f:U\times[0,T') \to\mathbb{R}$ so that $f(\,\bullet\,,t)$ satisfies (a) and (b) of \eqref{condiforcuttoff} for each $t$. 
Let $\psi:U\times [0,T')\to\mathbb{R}$ be a smooth function so that $\psi(\,\bullet\,,t)$ satisfies (a) of \eqref{condiforcuttoff} for each $t$. 
For each $k\in\mathbb{R}$, define 
\[\varphi:=\frac{1}{(4\pi (T'-t))^{k}}\exp\left(-\frac{\psi(t)}{4(T'-t)}\right)\]
and
\[
\Theta_{\psi,f,k}(h,T',t):=\int_{U}\varphi fd\mu(h). 
\]

\begin{proposition}
It holds that 
\begin{equation}\label{premonoform1}
\begin{aligned}
\frac{d}{dt}\Theta_{\psi,f,k}(h,T',t)
=&\int_{U}(\mathcal{L}_{\eta}\psi)f\varphi d\mu(h)
+\int_{U}\left(\frac{\partial}{\partial\tau}f-\Delta_{\eta}f\right)\varphi d\mu(h)\\
&+\frac{1}{2(T'-t)}\int_{U}\mathop{\mathrm{Re}}\left(\langle\overline{\partial\psi},\partial f\rangle_{\eta}\right)\varphi d\mu(h), 
\end{aligned}
\end{equation}
where
\begin{equation}\label{ELL}
\begin{aligned}
\mathcal{L}_{\eta}\psi:=&\frac{1}{4(T'-t)}\left(-\partial_{t}\psi+\Delta_{\eta}\psi-\frac{\psi}{4(T'-t)}+4k\right)\\
&-|H^{(1,0)}|^2_{\eta} -\frac{|\partial\psi|^2_{\eta}}{(4(T'-t))^2}. 
\end{aligned}
\end{equation}
\end{proposition}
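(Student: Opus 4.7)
The plan is to differentiate $\Theta_{\psi,f,k}(h,T',t)$ under the integral sign, apply the Leibniz rule, and then reorganize the three resulting contributions using the two analytic tools developed in the paper so far: the pointwise first variation of the density $v = |\zeta|$ along a line bundle mean curvature flow (as in the proof of Proposition \ref{firstvari}) and the divergence theorem \eqref{eq:stddivthm}. Schematically,
\[
\frac{d}{dt}\Theta_{\psi,f,k}(h,T',t)
= \int_U (\partial_t \varphi) f \, d\mu(h)
+ \int_U \varphi (\partial_t f)\, d\mu(h)
+ \int_U \varphi f \, \partial_t d\mu(h),
\]
and I treat each of the three contributions in turn.

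The second integral yields the $(\partial_\tau f)\varphi$ term in the target formula directly. For the third integral, along the flow I have $\bar\partial \dot u = H^{(0,1)}$ by Remark \ref{remforlbmcf}, and the pointwise computation of $\partial_t v$ from the proof of Proposition \ref{firstvari} (applied with $\varphi f$ replacing the constant $1$) yields
\[
\int_U \varphi f \, \partial_t d\mu(h)
= -\int_U \varphi f\, \langle \bar\partial \dot u, H^{(1,0)}\rangle_\eta \, d\mu(h)
= -\int_U \varphi f \, |H^{(1,0)}|^2_\eta \, d\mu(h),
\]
where the last equality uses $\eta^{\ell\bar{q}}H_{\ell}H_{\bar{q}} = |H^{(1,0)}|^2_\eta$ from \eqref{normofHH}. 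This is precisely the $-|H^{(1,0)}|^2_\eta$ summand of $\mathcal{L}_\eta\psi$.

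The heart of the argument lies in the first integral. A direct differentiation of $\varphi = (4\pi(T'-t))^{-k}\exp(-\psi/(4(T'-t)))$ gives
\[
\partial_t \varphi
= \frac{\varphi}{4(T'-t)}\left[\,4k - \partial_t\psi - \frac{\psi}{T'-t}\,\right],
\]
which reproduces the $4k$ and $-\partial_t\psi$ summands of $\mathcal{L}_\eta\psi$ but leaves a residual discrepancy in the $\psi$ term and does not yet produce the $\Delta_\eta\psi$, $|\partial\psi|^2_\eta$, $\Delta_\eta f$, and cross $\langle\overline{\partial\psi},\partial f\rangle_\eta$ contributions. I produce all of these by integrating $\Delta_\eta(\varphi f)$ by parts via the $v$-weighted divergence theorem \eqref{eq:stddivthm}, combined with the explicit formula
\[
\Delta_\eta\varphi = \varphi\left[\frac{|\partial\psi|^2_\eta}{16(T'-t)^2} - \frac{\Delta_\eta\psi}{4(T'-t)}\right].
\]
Expanding $\Delta_\eta(\varphi f) = f\Delta_\eta\varphi + \varphi\Delta_\eta f + \eta^{i\bar j}(\partial_i\varphi\,\partial_{\bar j}f + \partial_{\bar j}\varphi\,\partial_i f)$ produces the desired $\Delta_\eta\psi$, $|\partial\psi|^2_\eta$, and $\Delta_\eta f$ terms, while the cross-derivative pair assembles into $2\,\mathrm{Re}\langle \overline{\partial\psi},\partial f\rangle_\eta/(4(T'-t))$, giving the $1/(2(T'-t))$ coefficient in the statement. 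The boundary contributions produced by \eqref{eq:stddivthm} vanish by the same argument used at the end of the proof of Theorem \ref{corofdivthm}: the part over $(\partial B(r))\times B(r')$ vanishes by compact support of $f$ (condition (b) of \eqref{condiforcuttoff}), while the part over $B(r)\times(\partial B(r'))$ is purely imaginary by \eqref{gxiyj0} together with the $y$-independence assumptions \eqref{lsfc}, \eqref{lsfc2} and \eqref{condiforcuttoff}(a), and the overall integrand is real-valued.

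The main obstacle is the careful bookkeeping of coefficients. Since $\Delta_\eta$ is not self-adjoint with respect to $d\mu(h) = v\,\omega^n/n!$, integration by parts incurs a correction governed by $v^{-1}\nabla_i(v\eta^{i\bar j}) = -\eta^{r\bar j}g^{p\bar q}H_p F_{\bar q r}$ from \eqref{used2}. This correction, together with the $-|H^{(1,0)}|^2_\eta$ piece coming from $\partial_t d\mu(h)$ and the leftover $\psi$-discrepancy in $\partial_t\varphi$, must cancel precisely against the analogous pieces inside $\mathcal{L}_\eta\psi$. Verifying that the factors of $4$ and signs of the coefficients of $\psi$, $|\partial\psi|^2_\eta$, and $|H^{(1,0)}|^2_\eta$ all line up is the most delicate part of the argument; the remainder is a routine assembly of the identities above.
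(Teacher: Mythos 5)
Your overall architecture matches the paper's: differentiate under the integral sign, split into the contributions of $\partial_{t}\varphi$, $\partial_{t}f$ and $\partial_{t}d\mu(h)$, and recover the Laplacian and gradient terms from an expansion of $\Delta_{\eta}(\varphi f)$, with boundary terms killed as in the end of the proof of Theorem \ref{corofdivthm}. Your expansion of $\Delta_{\eta}(\varphi f)$ and the boundary discussion are correct, and the ``residual discrepancy'' you flag in the $\psi$-term is real: the computation produces $-\psi/(4(T'-t)^{2})$, which is what is actually used later in the proof of Theorem \ref{premono2}, so the $-\psi/(4(T'-t))$ inside the bracket of \eqref{ELL} is best read as a typo rather than something you must reproduce.

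There is, however, a genuine gap in your treatment of the third contribution. The identity
\[
\int_{U}\varphi f\,\partial_{t}d\mu(h)=-\int_{U}\varphi f\,\langle\overline{\partial}\dot{u},H^{(1,0)}\rangle_{\eta}\,d\mu(h)
\]
is false for nonconstant $\varphi f$: Proposition \ref{firstvari} is an integrated statement, while pointwise one has
\[
\partial_{t}\bigl(d\mu(h)\bigr)=-|H^{(1,0)}|^{2}_{\eta}\,d\mu(h)-\nabla_{j}\nabla_{\bar{i}}\bigl(v\eta^{j\bar{i}}\bigr)\frac{\omega^{n}}{n!},
\]
and the second summand is an exact divergence only under the unweighted integral; tested against $\varphi f$ it does not vanish. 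That term is precisely what, after two integrations by parts, yields $-\int_{U}\Delta_{\eta}(\varphi f)\,d\mu(h)$, i.e.\ the sole source of the $\Delta_{\eta}\psi$, $|\partial\psi|^{2}_{\eta}$, $\Delta_{\eta}f$ and cross contributions. As written you discard this piece from the third integral and then reintroduce $\Delta_{\eta}(\varphi f)$ while discussing the first integral $\int(\partial_{t}\varphi)f\,d\mu(h)$, which contains no spatial derivatives and cannot generate it; moreover $\int_{U}\Delta_{\eta}(\varphi f)\,d\mu(h)$ is not zero (by \eqref{used2} it equals a nontrivial first-order term in $H$ plus boundary terms), so it cannot be inserted for free. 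Restoring the divergence term in $\partial_{t}d\mu(h)$ and integrating it by parts closes your bookkeeping and reduces your argument to the paper's.
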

\begin{proof}
A straightforward calculation gives 
\[
\begin{aligned}
\frac{d}{dt}\Theta_{\psi,f,k}(h,T',t)
=&\int_{U}\left(\frac{d}{dt}\frac{1}{(4\pi (T'-t))^{k}}\right)\exp\left(-\frac{\psi}{4(T'-t)}\right)fd\mu(h)\\
&+\int_{U}\frac{1}{(4\pi (T'-t))^{k}}\left(\frac{\partial}{\partial t}\exp\left(-\frac{\psi(t)}{4(T'-t)}\right)\right)fd\mu(h)\\
&+\int_{U}\frac{1}{(4\pi (T'-t))^{k}}\exp\left(-\frac{\psi}{4(T'-t)}\right)f\frac{\partial}{\partial t}\left(d\mu(h(t))\right)\\
&+\int_{U}\frac{1}{(4\pi (T'-t))^{k}}\exp\left(-\frac{\psi}{4(T'-t)}\right)\frac{\partial}{\partial t}f(t)d\mu(h)\\
=:&I_{1}+I_{2}+I_{3}+I_{4}. 
\end{aligned}
\]
It is easy to see that 
\[
I_{1}=\int_{U}\frac{k}{T'-t}f\varphi d\mu(h)\quad\mbox{and}\quad
I_{2}=\int_{U}\left(-\frac{\psi}{4(T'-t)^2}
-\frac{\partial_{t}\psi}{4(T'-t)}\right)f\varphi d\mu(h). 
\]
To calculate $I_{3}$, we need to use 
\[
\frac{\partial}{\partial t}\left(d\mu(h(t))\right)=\left(\frac{\partial}{\partial t}|\zeta(h(t))|\right)\frac{\omega^n}{n!}
=\left(-\eta^{\ell\bar{q}}H_{\ell}H_{\bar{q}}|\zeta |
+\nabla_{j}\left( \eta^{j\bar{k}}F_{\bar{k}\ell}g^{\ell\bar{q}}H_{\bar{q}}|\zeta| \right)\right)\frac{\omega^n}{n!}, 
\]
where the second equality follows from the equation (3.7) in \cite{JacobYau} 
and $\nabla_{\bar{q}}\dot{\phi}=H_{\bar{q}}$. 
Taking the complex conjugate of both hand side of \eqref{used2} gives 
\[
v^{-1}\nabla_{\bar{i}}\left(v\eta^{j\bar{i}}\right)=-\eta^{j\bar{k}}g^{\ell\bar{q}}H_{\bar{q}}F_{\bar{k}\ell}. 
\]
Combining these two equations gives 
\[
\frac{\partial}{\partial t}\left(d\mu(h(t))\right)
=-|H^{(1,0)}|^2_{\eta}d\mu(h(t))
-\nabla_{j}\nabla_{\bar{i}}\left(v\eta^{j\bar{i}}\right)\frac{\omega^{n}}{n!}. 
\]
Thus, 
\[
I_{3}=-\int_{X}|H^{(1,0)}|^2_{\eta}f\varphi d\mu(h)-I_{5}, 
\]
where
\[
I_{5}:=\int_{U}\frac{1}{(4\pi (T'-t))^{k}}\exp\left(-\frac{\psi}{4(T'-t)}\right)f\nabla_{j}\nabla_{\bar{i}}\left(v\eta^{j\bar{i}}\right)\frac{\omega^n}{n!}. 
\]
By using the divergence theorem twice, with the similar argument as in the last part of the proof of Theorem \ref{corofdivthm} which ensures 
the boundary contribution is $0$, 
one can verify that 

\[
\begin{aligned}
I_{5}
=&\int_{X}\frac{\eta^{j\bar{i}}\nabla_{\bar{i}}\psi\nabla_{j}\psi}{(4(T'-t))^2}f\varphi d\mu(h)
-\int_{X}\frac{\Delta_{\eta}\psi}{4(T'-t)}f\varphi d\mu(h)+\int_{X}\Delta_{\eta}f\varphi d\mu(h)\\
&-\int_{X}\frac{\nabla_{j}\psi}{4(T'-t)}\nabla_{\bar{i}}f\eta^{j\bar{i}}\varphi d\mu(h)
-\int_{X}\frac{\nabla_{\bar{i}}\psi}{4(T'-t)}\nabla_{j}f\eta^{j\bar{i}}\varphi d\mu(h). 
\end{aligned}
\]
Combining all above calculations together gives the desired formula. 
\end{proof}

Fix $Q:=(p,T')\in U_{r/4}\times (0,T)$. 
Let $\mathcal{P}_{p}(t)$ be the position section 
of $h_{t}$ centered at $p$. 
We denote $\Theta_{|\mathcal{P}_{p}|^2,f,n/2}(h,T',t)$ by $\Theta_{f}(h,Q,t)$ simply, that is, 
\[\Theta_{f}(h,Q,t):=\int_{U}\frac{1}{(4\pi (T'-t))^{n/2}}\exp\left(-\frac{|\mathcal{P}_{p}(t)|^2}{4(T'-t)}\right)f(t)d\mu(h(t)).\]
We basically omit the subscript $p$ of $\mathcal{P}_{p}(t)$. 

\begin{theorem}\label{premono2}
It holds that 
\[
\begin{aligned}
&\frac{d}{dt}\Theta_{f}(h,Q,t)\\
=&-\int_{U}\left|\mathcal{H}+\frac{\mathcal{P}^{\bot}}{2(T'-t)}\right|^2f\varphi d\mu(h)
+\int_{U}\left(\frac{\partial}{\partial t}f-\Delta_{\eta}f\right)\varphi d\mu(h)
\end{aligned}
\]
where
\[\varphi:=\frac{1}{(4\pi (T'-t))^{n/2}}\exp\left(-\frac{|\mathcal{P}(t)|^2}{4(T'-t)}\right). \]
\end{theorem}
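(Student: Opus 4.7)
The plan is to substitute $\psi = |\mathcal{P}(t)|^2$ and $k = n/2$ into the general formula \eqref{premonoform1} from the previous proposition and manipulate the resulting expression using Theorem \ref{corofdivthm} together with the algebraic structure of the position section $\mathcal{P}$. Since $\int_U (\partial_t f - \Delta_\eta f)\varphi\, d\mu(h)$ already appears both in the output of \eqref{premonoform1} and in the target right-hand side, what remains is to identify
\[
\int_U (\mathcal{L}_\eta|\mathcal{P}|^2) f\varphi\, d\mu(h) + \frac{1}{2(T'-t)}\int_U \mathrm{Re}\langle\overline{\partial|\mathcal{P}|^2}, \partial f\rangle_\eta\, \varphi\, d\mu(h)
\]
with $-\int_U |\mathcal{H} + \mathcal{P}^\perp/(2(T'-t))|^2 f\varphi\, d\mu(h)$.

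Two algebraic identities drive the reduction. First, from $\nabla_i|\mathcal{P}|^2 = 2\langle\overline{\mathcal{E}_i}, \mathcal{P}\rangle$, which is a reorganization of \eqref{dpsruare} matching the projection formula \eqref{ptopexp}, one obtains the pointwise identity $|\partial|\mathcal{P}|^2|^2_\eta = 4|\mathcal{P}^\top|^2$; this handles the last term of $\mathcal{L}_\eta|\mathcal{P}|^2$. Second, comparing the same expression against the definition of $\mathcal{D}f$ gives $\langle\overline{\partial|\mathcal{P}|^2}, \partial f\rangle_\eta = 2\langle\overline{\mathcal{D}f}, \mathcal{P}\rangle$, so the cross term rewrites as $(T'-t)^{-1}\int_U \langle\overline{\mathcal{D}f}, \mathcal{P}\rangle \varphi\, d\mu(h)$. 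Applying Theorem \ref{corofdivthm} with $\alpha = -1/(4(T'-t))$, chosen so that $\exp(\alpha|\mathcal{P}|^2)$ is precisely the Gaussian factor of $\varphi$ up to a space-constant, converts this term into
\[
-\frac{1}{T'-t}\int_U \left(n + \langle\overline{\mathcal{H}}, \mathcal{P}\rangle - \frac{|\mathcal{P}^\top|^2}{2(T'-t)}\right) f\varphi\, d\mu(h),
\]
supplying the mean-curvature and tangential contributions needed to match the expansion of $|\mathcal{H} + \mathcal{P}^\perp/(2(T'-t))|^2$.

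The time derivative $\partial_t|\mathcal{P}|^2$ is computed from the flow equation $\partial_t \phi = \theta - \hat\theta$, giving $\nabla_p(\partial_t\phi) = H_p$ and $\nabla_{\bar q}(\partial_t\phi) = H_{\bar q}$; the identity $|H^{(1,0)}|^2_\eta = |\mathcal{H}|^2$ from \eqref{normofHH} accounts for the $-|H^{(1,0)}|^2_\eta$ term of $\mathcal{L}_\eta|\mathcal{P}|^2$; and the orthogonal decomposition $|\mathcal{P}^\perp|^2 = |\mathcal{P}|^2 - |\mathcal{P}^\top|^2$, together with the normality $\langle\overline{\mathcal{H}}, \mathcal{P}^\perp\rangle = \langle\overline{\mathcal{H}}, \mathcal{P}\rangle$, reconciles the remaining perpendicular and tangential pieces. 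Collecting coefficients of $|\mathcal{P}|^2/(T'-t)^2$, $|\mathcal{P}^\top|^2/(T'-t)^2$, $\langle\overline{\mathcal{H}}, \mathcal{P}\rangle/(T'-t)$, $|\mathcal{H}|^2$, and $n/(T'-t)$ on both sides completes the identification.

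The main obstacle is that, in contrast with the classical mean curvature flow setting where one has the pointwise identity $\Delta_L|x|^2 = 2n + 2\langle \vec{H}, x\rangle$, no comparably simple pointwise expression is available for $\Delta_\eta|\mathcal{P}|^2$. Consequently the $\Delta_\eta|\mathcal{P}|^2/(4(T'-t))$ piece inside $\mathcal{L}_\eta|\mathcal{P}|^2$ cannot be reduced by an algebraic identity, and must instead be handled by integrating by parts against $f\varphi\, d\mu(h)$; the $\partial f$- and $\partial\varphi$-contributions produced thereby combine with the cross term computed via Theorem \ref{corofdivthm} to yield the required cancellations, following the same integration-by-parts template as the proof of \eqref{premonoform1} itself.
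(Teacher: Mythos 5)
Your overall route is the paper's: substitute $\psi=|\mathcal{P}|^2$, $k=n/2$ into \eqref{premonoform1}, use $|\partial|\mathcal{P}|^2|^2_{\eta}=4|\mathcal{P}^{\top}|^2$ and $\langle\overline{\partial|\mathcal{P}|^2},\partial f\rangle_{\eta}=2\langle\overline{\mathcal{D}f},\mathcal{P}\rangle$, and apply Theorem \ref{corofdivthm} with $\alpha=-1/(4(T'-t))$; all of that matches. The problem is your final paragraph, which asserts that no pointwise expression for $\Delta_{\eta}|\mathcal{P}|^2$ analogous to $\Delta_{L}|x|^2=2n+2\langle\vec{H},x\rangle$ is available, and proposes to handle the term by a further integration by parts. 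That claim is false, and the missing identity is precisely the step that makes the proof close. Differentiating \eqref{dpsruare} once more and using the semi-flat/graphical conditions gives
\[
\nabla_{\bar{j}}\nabla_{i}|\mathcal{P}|^2=2\bigl(\eta_{\bar{j}i}+\nabla_{q}\phi\,\nabla_{\bar{j}}F_{\bar{p}i}\,g^{q\bar{p}}\bigr),
\]
hence, contracting with $\eta^{i\bar{j}}$ and using $\eta^{i\bar{j}}\nabla_{\bar{j}}F_{\bar{p}i}=H_{\bar{p}}$,
\[
\Delta_{\eta}|\mathcal{P}|^2=2n+2g^{q\bar{p}}\nabla_{q}\phi\,H_{\bar{p}}.
\]
Combined with $\partial_{t}|\mathcal{P}|^2=2g^{p\bar{q}}\nabla_{p}\phi\,H_{\bar{q}}$ (which you do compute), the $\nabla\phi\cdot H$ terms cancel pointwise and $-\partial_{t}|\mathcal{P}|^2+\Delta_{\eta}|\mathcal{P}|^2=2n$; this is what produces the $n/(T'-t)$ contribution that Theorem \ref{corofdivthm} then absorbs.

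Without this identity your argument is internally inconsistent: the ``collecting coefficients'' step in your third paragraph presupposes that every term of $\mathcal{L}_{\eta}|\mathcal{P}|^2$ has been reduced to the listed monomials, which is exactly what the pointwise formula for $\Delta_{\eta}|\mathcal{P}|^2$ provides and what your fourth paragraph denies. The proposed substitute --- integrating $\Delta_{\eta}|\mathcal{P}|^2$ by parts against $f\varphi\,d\mu(h)$ --- is not carried out, would partially undo the integration by parts already performed in deriving \eqref{premonoform1}, and would reintroduce terms coming from $v^{-1}\nabla_{i}(v\eta^{i\bar{j}})$ (see \eqref{used2}) that you nowhere account for. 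Replace that paragraph with the two-line computation above and the proof is the paper's.
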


\begin{proof}
We will calculate $\mathcal{L}_{\eta}|\mathcal{P}|^2$ first, 
see \eqref{ELL} for the definition of $\mathcal{L}_{\eta}$. 
By \eqref{psquare}, we have 
\[
\partial_{t}|\mathcal{P}|^2=g^{p\bar{q}}H_{p} \nabla_{\bar{q}}\phi+g^{p\bar{q}}\nabla_{p}\phi H_{\bar{q}}=2g^{p\bar{q}}\nabla_{p}\phi H_{\bar{q}}, 
\]
where we used the semi-flat condition. 
From \eqref{dpsruare} and \eqref{ptopexp}, it follows that 
\begin{equation}\label{used4}
\left| \partial|\mathcal{P}|^2\right|^2_{\eta}=4|\mathcal{P}^{\top}|^2. 
\end{equation}
Differentiating \eqref{dpsruare} gives
\[
\begin{aligned}
\nabla_{\bar{j}}\nabla_{i}|\mathcal{P}|^2
=&2\left(g_{\bar{j}i}+F_{\bar{j}q} F_{\bar{p}i} g^{q\bar{p}}+\nabla_{q}\phi \nabla_{\bar{j}}F_{\bar{p}i} g^{q\bar{p}}\right)\\
=&2\left(\eta_{\bar{j}i}+\nabla_{q}\phi \nabla_{\bar{j}}F_{\bar{p}i} g^{q\bar{p}}\right). 
\end{aligned}
\]
This yields 
\[\Delta_{\eta}|\mathcal{P}|^2=\eta^{i\bar{j}}\nabla_{\bar{j}}\nabla_{i}|\mathcal{P}|^2=2n+2\nabla_{q}\phi H_{\bar{p}} g^{q\bar{p}}. \]
Combining the above formulas and \eqref{normofHH} implies 
\[
\begin{aligned}
\mathcal{L}_{\eta}|\mathcal{P}|^2=&\frac{n}{2(T'-t)}-\frac{|\mathcal{P}|^2}{4(T'-t)^2}-\frac{2g^{p\bar{q}}\nabla_{p}\phi H_{\bar{q}}}{4(T'-t)}\\
&-|\mathcal{H}|^2-\frac{|\mathcal{P}^{\top}|^2}{4(T'-t)^2}+\frac{2n+2\nabla_{q}\phi H_{\bar{p}} g^{q\bar{p}}}{4(T'-t)}\\
=&-\left|\mathcal{H}+\frac{\mathcal{P}^{\bot}}{2(T'-t)}\right|^2+\frac{n}{T'-t}
+\frac{\langle\overline{\mathcal{H}},\mathcal{P}\rangle}{T'-t}-\frac{2|\mathcal{P}^{\top}|^2}{4(T'-t)^2}, 
\end{aligned}
\]
where we used the fact that $\mathcal{H}$ is normal. 
Thus, 
\[
\begin{aligned}
\int_{U}\mathcal{L}_{\eta}|\mathcal{P}|^2f\varphi d\mu(h)=&-\int_{U}\left|\mathcal{H}+\frac{\mathcal{P}^{\bot}}{2(T'-t)}\right|^2f\varphi d\mu(h)\\
&+\frac{1}{T'-t}\int_{U}\left(n+\langle\overline{\mathcal{H}},\mathcal{P}\rangle-\frac{2|\mathcal{P}^{\top}|^2}{4(T'-t)}\right)f\varphi d\mu(h). 
\end{aligned}
\]
Applying Theorem \ref{corofdivthm} with $\alpha=-1/(4(T'-t))$ yields 
\[
\frac{1}{T'-t}\int_{U}\left(n+\langle\overline{\mathcal{H}},\mathcal{P}\rangle-\frac{2|\mathcal{P}^{\top}|^2}{4(T'-t)}\right)f\varphi d\mu(h)
=-\frac{1}{T'-t}\int_{U}\langle\overline{\mathcal{D}f},\mathcal{P}\rangle\varphi d\mu(h). 
\]
Moreover, by a partial consequence of \eqref{used3}, we have
\[
\langle\overline{\partial|\mathcal{P}|^2},\partial f\rangle_{\eta}
=2\left(2g_{\bar{i}p}(x^{p}-x_{0}^{p})+\nabla_{\bar{q}}\phi F_{\bar{i}p} g^{p\bar{q}}\right)\nabla_{j}f\eta^{j\bar{i}}
=2\langle \overline{\mathcal{D}f},\mathcal{P}\rangle. 
\]
Thus, 
$
\langle\overline{\partial|\mathcal{P}|^2},\partial f\rangle_{\eta}
=2\langle \overline{\mathcal{D}f},\mathcal{P}\rangle. 
$
Then, substituting the above formulas into \eqref{premonoform1} gives 
the desired formula. 
\end{proof}

As an application of Theorem \ref{premono2}, we get a monotonicity formula. 
Assume that $Q=(p,T')\in U_{r/4}\times(0,T)$ is given. 
Let $\tilde{f}:\mathbb{R}\to [0,1]$ be a smooth cut-off function 
which is strictly decreasing on the interval $[1,2]$ satisfying 
\[
\tilde{f}(x)=\begin{cases}1 \quad\mbox{if}\quad x\in(-\infty,1]\\0 \quad\mbox{if}\quad x\in[2,\infty)\end{cases}
\quad\mbox{and}\quad
|\tilde{f}'|+|\tilde{f}''|\leq C'
\]
for some constant $C'>0$. 
Let $\lambda=\lambda(g)>0$ be the square root of the minimum of the lowest eigenvalue of $(g_{\bar{i}j})$ on the closure of $U$. 
Define $f:U_{3r/4}\times[0,T')\to \mathbb{R}$ by 
\[f(z,t):=\tilde{f}\left(\frac{4|\mathcal{P}_{p}(z,t)|}{\lambda r}\right). \]
Note that $f((x,y),t)$ is $y$-invariant and the support of $f((\,\cdot\,,0),t)$ is contained in $B(r/2)$ for each $t\in [0,T')$. 
Actually, by \eqref{psquare}, we have 
\begin{equation}\label{lowbdofP}
    |\mathcal{P}(z,t)|\geq 2\lambda|x-x_{0}|\geq 2\lambda (|x|-|x_{0}|).
\end{equation}
This yields that if $|x|\geq r/2$ then $f=0$. 
Thus, $f(\,\bullet\,,t)$ satisfies (a) and (b) of \eqref{condiforcuttoff} for each $t$. 

We denote $\Theta_{f}(h,Q,t)$ by $\Theta(h,Q,t)$ simply, that is, 
\[
\Theta(h,Q,t):=\int_{U}\frac{1}{(4\pi (T'-t))^{n/2}}\exp\left(-\frac{|\mathcal{P}_{p}(t)|^2}{4(T'-t)}\right)
\tilde{f}\left(\frac{4|\mathcal{P}_{p}(t)|}{\lambda r}\right)d\mu(h(t)).
\]
\begin{theorem}\label{mono}
If $X$ is closed, then there exists a constant $C>0$ such that 
\begin{equation}\label{monofrom2}
\frac{d}{dt}\Theta(h,Q,t)
\leq -\int_{X}\left|\mathcal{H}+\frac{\mathcal{P}^{\bot}}{2(T'-t)}\right|^2f\varphi d\mu(h)+C, 
\end{equation}
where
\[\varphi:=\frac{1}{(4\pi (T'-t))^{n/2}}\exp\left(-\frac{|\mathcal{P}(t)|^2}{4(T'-t)}\right). \]
The constant $C$ is given by $C=C'C''(n)V(h(0))\lambda^{-(n+2)}r^{-(n+2)}$, 
where $V(h(0))$ is the volume of $h(0)$ and $C''(n)>0$ is a constant which depends only on $n$. 
\end{theorem}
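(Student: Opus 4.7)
The plan is to start from Theorem \ref{premono2}, which expresses $\frac{d}{dt}\Theta(h,Q,t)$ as the desired nonpositive geometric term plus a cutoff ``error''
\[
E(t):=\int_U\bigl(\partial_tf-\Delta_\eta f\bigr)\varphi\,d\mu(h),
\]
and to show $|E(t)|\le C$ with $C$ of the stated form. Once this is done, discarding the nonpositive term gives \eqref{monofrom2} immediately.

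The heart of the argument is a cancellation. From the identities $\partial_t|\mathcal{P}|^2=2g^{p\bar q}\nabla_p\phi\,H_{\bar q}$ and $\Delta_\eta|\mathcal{P}|^2=2n+2g^{q\bar p}\nabla_q\phi\,H_{\bar p}$ that already appear inside the proof of Theorem \ref{premono2}, a relabeling of dummy indices yields the clean relation
\[
\partial_t|\mathcal{P}|^2-\Delta_\eta|\mathcal{P}|^2=-2n.
\]
Writing the cutoff as $f=G(|\mathcal{P}|^2)$ with $G(s):=\tilde f(4\sqrt{s}/(\lambda r))$, the chain rule combined with $\bigl|\partial|\mathcal{P}|^2\bigr|_\eta^2=4|\mathcal{P}^\top|^2$ (formula \eqref{used4}) gives
\[
\partial_tf-\Delta_\eta f=-2n\,G'(|\mathcal{P}|^2)-4G''(|\mathcal{P}|^2)\,|\mathcal{P}^\top|^2,
\]
with no mean curvature terms surviving.

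The remaining bookkeeping is routine. On the annular support of $G'$ and $G''$, where $|\mathcal{P}|^2\in[\lambda^2r^2/16,\lambda^2r^2/4]$, direct differentiation of $G$ together with $|\tilde f'|+|\tilde f''|\le C'$ and the trivial bound $|\mathcal{P}^\top|\le|\mathcal{P}|$ yields $|\partial_tf-\Delta_\eta f|\le C'C''(n)\lambda^{-2}r^{-2}$. On the same region the Gaussian factor is controlled by
\[
\varphi\le\frac{1}{(4\pi(T'-t))^{n/2}}\exp\!\left(-\frac{\lambda^2r^2}{64(T'-t)}\right),
\]
and the elementary calculus bound $\sup_{u>0}u^{-n/2}e^{-c/u}\le C_1(n)c^{-n/2}$ then gives $\varphi\le C_1(n)\lambda^{-n}r^{-n}$ uniformly in $t$. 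Finally, the first variation formula in Proposition \ref{firstvari} combined with $\bar\partial\dot u=H^{(0,1)}$ from Remark \ref{remforlbmcf} shows that $\frac{d}{dt}V(h_t)\le 0$, so $\int_U d\mu(h_t)\le V(h(0))$. Multiplying the three bounds produces the constant $C=C'\,\widetilde C(n)\,V(h(0))\,\lambda^{-(n+2)}\,r^{-(n+2)}$.

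The main obstacle is the cancellation recorded in the second paragraph: were $\partial_tf-\Delta_\eta f$ to retain a residual term of the form $H_{\bar q}\nabla^q\phi$, no flow-independent pointwise bound would be available and the whole argument would collapse (the $C$ in the theorem would depend on $\sup|H|$, which is not a controlled quantity). Everything else is bookkeeping of powers of $r$ and $\lambda$ together with the standard Gaussian calculus estimate.
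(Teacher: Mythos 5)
Your proposal is correct and follows essentially the same route as the paper: apply Theorem \ref{premono2}, use $\partial_t|\mathcal{P}|^2-\Delta_\eta|\mathcal{P}|^2=-2n$ and $\bigl|\partial|\mathcal{P}|^2\bigr|^2_\eta=4|\mathcal{P}^\top|^2$ to reduce the error term to cutoff derivatives supported on the annulus $\{\lambda r/4\le|\mathcal{P}|\le\lambda r/2\}$, control the Gaussian factor there by $C''(n)(\lambda r)^{-n}$, and finish with the volume monotonicity $V(h(t))\le V(h(0))$. If anything, your two-sided bound that retains the $-2n\,G'(|\mathcal{P}|^2)$ contribution is slightly more careful than the paper's one-sided estimate (which discards that term even though it is nonnegative), but since it is of the same order $C'\lambda^{-2}r^{-2}$ the constant has the stated form either way.
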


\begin{proof}
Put $Y:=4|\mathcal{P}(t)|/\lambda r$ for short. 
Then, we have 
\[
\frac{\partial}{\partial t}f
=\tilde{f}'(Y)\frac{2}{\lambda r|\mathcal{P}|}\frac{\partial}{\partial t}|\mathcal{P}|^2, \quad
\nabla_{\bar{j}}f=\tilde{f}'(Y)\frac{2\nabla_{\bar{j}}|\mathcal{P}|^2}{\lambda r|\mathcal{P}|}
\]
and 
\[
\nabla_{i}\nabla_{\bar{j}}f
=\tilde{f}''(Y)\frac{4\nabla_{i}|\mathcal{P}|^2\nabla_{\bar{j}}|\mathcal{P}|^2}{(\lambda r|\mathcal{P}|)^2}
-\tilde{f}'(Y)\frac{\nabla_{i}|\mathcal{P}|^2\nabla_{\bar{j}}|\mathcal{P}|^2}{\lambda r|\mathcal{P}|^3}
+\tilde{f}'(Y)\frac{2\nabla_{i}\nabla_{\bar{j}}|\mathcal{P}|^2}{\lambda r|\mathcal{P}|}. 
\]
By using $\tilde{f}'(Y)\leq 0$, $|\tilde{f}''(Y)|\leq C'$ and $\partial_{t}|\mathcal{P}|^2-\Delta_{\eta}|\mathcal{P}|^2=-2n\leq0$, we estimate 
\[
\frac{\partial}{\partial t}f-\Delta_{\eta}f
\leq C'\frac{4\left|\partial|\mathcal{P}|^2\right|^2_{\eta}}{(\lambda r|\mathcal{P}|)^2}\chi_{A(t)}, 
\]
where $\chi_{A(t)}$ is the characteristic function of a set $A(t):=\{\,z\in U\mid \, \lambda r/4\leq |\mathcal{P}(z,t)|\leq \lambda r/2\,\}$. 
By \eqref{used4}, we have
$\left|\partial|\mathcal{P}|^2\right|^2_{\eta}=4\left|\mathcal{P}^{\top}\right|^2\leq 4|\mathcal{P}|^2$. 
This yields that 
\[
\frac{\partial}{\partial t}f-\Delta_{\eta}f
\leq \frac{4^3C'}{\lambda^2r^2}\chi_{A(t)}. 
\]
Thus, we have 
\[
\begin{aligned}
\left(\frac{\partial}{\partial t}f-\Delta_{\eta}f\right)\varphi
=&\frac{1}{(4\pi (T'-t))^{n/2}}\exp\left(-\frac{|\mathcal{P}(t)|^2}{4(T'-t)}\right)\left(\frac{\partial}{\partial t}f-\Delta_{\eta}f\right)\\
\leq &\frac{4^3C'}{\lambda^2r^2} \frac{1}{(4\pi (T'-t))^{n/2}}\exp\left(-\frac{(\lambda r/4)^2}{4(T'-t)}\right)\\
=&\frac{4^3C'}{\lambda^2r^2} \frac{1}{\pi^{n/2}}\left(\frac{(\lambda r/4)^2}{4(T'-t)}\right)^{n/2}\left\{\frac{1}{(\lambda r/4)^{n}}\exp\left(-\frac{(\lambda r/4)^2}{4(T'-t)}\right)\right\}\\
\leq &C'C''(n)\frac{1}{\lambda^{n+2}r^{n+2}}=:C''', 
\end{aligned}
\]
where we put 
\[
C''(n):=\frac{4^{n+3}}{\pi^{n/2}}\max\{\,x^{n/2}\exp(-x)\mid x\geq 0\,\}. 
\]
Thus, we have 
\[
\int_{U}\left(\frac{\partial}{\partial t}f-\Delta_{\eta}f\right)\varphi d\mu(h)
\leq C'''\int_{X} d\mu(h(t))
= C'''V(h(t))\leq C'''V(h(0))=:C, 
\]
where we used the fact that the volume is finite on the closed $X$ 
and decreasing along a line bundle mean curvature flow. 
Then, by Theorem \ref{premono2}, we have 
\[
\frac{d}{dt}\Theta(h,Q,t)
\leq -\int_{X}\left|\mathcal{H}+\frac{\mathcal{P}^{\bot}}{2(T'-t)}\right|^2f\varphi d\mu(h)+C, 
\]
and the proof is complete. 
\end{proof}

\begin{remark}
The first term on the right hand side of \eqref{monofrom2} multiplied by $-1$  
is just $B(h)$ mentioned in (iii) of Subsection \ref{subsec1.4}. 
\end{remark}

We give an application of Theorem \ref{mono}. 
Hence, assume that $X$ is closed. 
Fix a point $Q=(p,T')\in U_{r/4}\times (0,T')$. 
We define a kind of ``translation'' of $h_{t}$ as follows. 
First, let $\phi_{2}(x,T')$ be the Taylor expansion 
of $\phi(\,\cdot\,,T')$ at $x=x_{0}$ up to the first order, 
where $x_{0}$ is the $B(r/4)$-component of $p$ on $U_{r/4}$ 
via $\varphi:B(r/4)\times B(r')\to U_{r/4}$. 
Precisely, we have 
\[\phi_{2}(x,T'):=\phi(x_{0},T')+\frac{\partial \phi(x_{0},T')}{\partial x^{i}}(x^{i}-x_{0}^{i}). \]
This is a function on $U_{3r/4}$ which does not depend on $y$. 
Next, subtract $\phi_{2}(x,T')$ from $\phi(x,t)$ and denote it by 
\begin{equation}\label{tildephi}
(A_{Q}\phi)(x,t):=\phi(x,t)-\left(\phi(x_{0},T')+\sum_{i=1}^{n}\frac{\partial\phi(x_{0},T')}{\partial x^{i}}(x^{i}-x_{0}^{i})\right)
\end{equation}
and put 
\[(A_{Q}h)_{t}:=e^{-(A_{Q}\phi)(t)}(\bar{e}^{*}\otimes e). \]
Then, each $(A_{Q}h)_{t}$ is a Hermitian metric of $L$ defined 
only on $U_{3r/4}$ and is also graphical for all $t\in[0,T)$. 
Moreover, $A_{Q}h:=\{\,(A_{Q}h)_{t}\,\}_{t\in[0,T)}$ is also a line bundle mean curvature flow on $U_{3r/4}$. 
This can be easily seen as follows. 
The function $\phi(x_{0},T')+\frac{\partial\phi(x_{0},T')}{\partial x^{i}}(x^{i}-x_{0}^{i})$ does not 
depend on $t$ and the angle function $\theta$ is invariant under the first order perturbation 
since it is defined by the second derivative of $\log h$. 

Thus, we can apply Theorem \ref{mono} to the line bundle mean curvature flow $A_{Q}h$. 
Then, we can see that $\Theta(A_{Q}h,Q,t)+C(T'-t)$ is monotonically decreasing and its limit exists as $t\to T'$. 
This implies the existence of the limit of $\Theta(A_{Q}h,Q,t)$ as $t\to T'$. 

\begin{definition}\label{gaussdensity}
For $Q=(p,T')\in U_{r/4}\times (0,T)$, we define
\begin{equation*}
\begin{aligned}
\bar{\Theta}(h,Q,t):=&\frac{(2\sqrt{2})^{n}}{\mathop{\mathrm{Vol}_{g}}(B(r')_{p})}\Theta(A_{Q}h,Q,t), \\
\bar{\Theta}(h,Q):=&\lim_{t\to T'}\bar{\Theta}(h,Q,t), 
\end{aligned}
\end{equation*}
and call $\bar{\Theta}(h,Q,t)$ the {\it Gaussian density} of $h$ at $Q=(p,T')$ with scale $t$ and $\bar{\Theta}(h,Q)$ the {\it Gaussian density} of $h$ at $Q=(p,T')$, where $B(r')_{p}:=\varphi^{-1}(\{\,x_{0}\,\}\times B(r'))\subset U$ 
and the volume of $B(r')_{p}$ is measured by $g$. 
\end{definition}

In what follows, we prove that $\bar{\Theta}(h,Q)\geq 1$. 
Put $\tilde{\phi}:=(A_{Q}\phi)(t)$ and $\tilde{h}_{t}:=(A_{Q}h)_{t}$ for short. 
Recall that in Definition \ref {defofscaling} for $T''\in\mathbb{R}$ and $k\in\mathbb{N}_{>0}$ a scaling of $h$ is defined by 
$(D_{k}^{T''}\tilde{h})_{s}:=\tilde{h}_{t}^{\otimes k}$
with $t=T''+s/k$. 
Put $f:=e^{\otimes k}$. Then, we have 
\[-\log\Bigl((D_{k}^{T''}\tilde{h})_{s}(\bar{f},f)\Bigr)=k(A_{Q}\phi)(T''+s/k). \]
Since the 0-th and first derivative at $(p,k(T'-T''))=:Q'$ of the right hand side 
with respect to $x$ are zero, we see that 
\[
A_{Q'}\left(-\log\Bigl((D_{k}^{T''}\tilde{h})_{s}(\bar{f},f)\Bigr)\right)=k(A_{Q}\phi)(T''+s/k). 
\]
Thus, for given $k\in\mathbb{N}$, it is clear that 
\[
\Theta(D_{k}^{T''}\tilde{h},Q',k(t-T''))=\sqrt{k}^{n}\Theta(\tilde{h},Q,t), 
\]
where the left (resp. right) hand side is calculated with respect to $kg$ (resp. $g$). 
On the other hand, we have 
\begin{equation}\label{voloftorus}
\mathrm{Vol}_{kg}(B(r')_{p})=\sqrt{k}^{n}\mathrm{Vol}_{g}(B(r')_{p}). 
\end{equation}
Thus, we have proved the following.  
\begin{lemma}
We have 
\begin{align}\label{densityinv2}
\frac{(2\sqrt{2})^{n}}{\mathrm{Vol}_{kg}(B(r')_{p})}\Theta(D_{k}^{T''}\tilde{h},Q',k(t-T''))
=\frac{(2\sqrt{2})^{n}}{\mathrm{Vol}_{g}(B(r')_{p})}\Theta(\tilde{h},Q,t).
\end{align}
\end{lemma}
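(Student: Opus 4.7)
The plan is to combine the two identities already established immediately before the lemma statement. The first,
\[
\Theta(D_{k}^{T''}\tilde{h},Q',k(t-T''))=\sqrt{k}^{n}\Theta(\tilde{h},Q,t),
\]
records how the Gaussian-weighted integral transforms under the parabolic scaling $D_{k}^{T''}$: the factor $\sqrt{k}^{n}$ arises from the interplay between the volume form $(k\omega)^{n}/n!=k^{n}\omega^{n}/n!$ attached to the rescaled K\"ahler manifold and the rescaled parabolic kernel $1/(4\pi k(T'-t))^{n/2}$, once we verify that the affine normalization $A_{Q'}$ applied to $-\log((D_{k}^{T''}\tilde{h})_{s}(\bar{f},f))$ is exactly $k\cdot(A_{Q}\phi)(T''+s/k)$ so that the corresponding position sections $\mathcal{P}$, and therefore the exponent $|\mathcal{P}|^{2}/(4(T'-t))$ in the Gaussian weight, transform consistently under the scaling of time $t\mapsto k(t-T'')$. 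The second ingredient is equation \eqref{voloftorus}, the elementary statement that the Riemannian volume of the real $n$-dimensional fiber $B(r')_{p}$ is multiplied by $\sqrt{k}^{n}$ when $g$ is replaced by $kg$.

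The lemma then follows by a one-line calculation: substituting the first identity into the left-hand side of the claim and replacing $\mathop{\mathrm{Vol}_{kg}}(B(r')_{p})$ via \eqref{voloftorus} causes the two factors $\sqrt{k}^{n}$ in the numerator and the denominator to cancel exactly, producing the right-hand side. There is no genuine obstacle; the only point that requires care is the verification that $A_{Q'}$ applied to the scaled potential coincides with $k\cdot(A_{Q}\phi)$, and this is precisely the observation already made in the preceding paragraph that the zeroth and first $x$-derivatives of $k(A_{Q}\phi)(T''+s/k)$ vanish at $Q'=(p,k(T'-T''))$. Once this is in hand, the equality is purely algebraic.
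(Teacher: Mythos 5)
Your proof is correct and follows exactly the paper's argument: the lemma is obtained by combining the scaling identity $\Theta(D_{k}^{T''}\tilde{h},Q',k(t-T''))=\sqrt{k}^{n}\Theta(\tilde{h},Q,t)$ with \eqref{voloftorus} and cancelling the two factors of $\sqrt{k}^{n}$, after checking that $A_{Q'}$ of the scaled potential equals $k(A_{Q}\phi)(T''+s/k)$. Nothing to add.
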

Putting $T'':=T'$ and $t:=T'-1/k$ in this formula gives 
\begin{align}\label{densityinv}
\frac{(2\sqrt{2})^{n}}{\mathrm{Vol}_{kg}(B(r')_{p})}\Theta(D_{k}^{T'}\tilde{h},Q',-1)
=\frac{(2\sqrt{2})^{n}}{\mathrm{Vol}_{g}(B(r')_{p})}\Theta(\tilde{h},Q,T'-1/k).
\end{align}
\begin{lemma}
We have 
\begin{equation}\label{densitygeq1}
1\leq \lim_{k\to\infty}\frac{(2\sqrt{2})^{n}}{\mathrm{Vol}_{kg}(B(r')_{p})}\Theta(D_{k}^{T'}\tilde{h},Q',-1), 
\end{equation}
where $\tilde{h}:=A_{Q}h$. 
\end{lemma}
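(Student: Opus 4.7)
The plan is to compute the limit on the right-hand side directly via a parabolic blow-up at $(x_{0},T')$. Combining \eqref{densityinv} with the scaling relation $\mathrm{Vol}_{kg}(B(r')_{p})=k^{n/2}\mathrm{Vol}_{g}(B(r')_{p})$ (which follows from \eqref{voloftorus}), the desired inequality is equivalent to
\[1\leq \lim_{t\to T'^{-}}\frac{(2\sqrt{2})^{n}}{\mathrm{Vol}_{g}(B(r')_{p})}\,\Theta(\tilde{h},Q,t).\]
Unpacking the definition, the right-hand side is an integral of the Gaussian weight $(4\pi(T'-t))^{-n/2}\exp(-|\mathcal{P}|^{2}/(4(T'-t)))$ against the measure $d\mu(\tilde{h}(t))=\sqrt{\det g\,\det\eta}\,dx\,dy$; this measure is $y$-inert by the semi-flat and graphical hypotheses. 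To evaluate the limit, I would perform the substitution $u:=(x-x_{0})/\sqrt{T'-t}$, under which $dx=(T'-t)^{n/2}du$ cancels the prefactor $(T'-t)^{-n/2}$, leaving $(4\pi)^{-n/2}$.

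The key structural input is that the translation $A_{Q}$ is designed precisely so that $\tilde\phi:=-\log\tilde{h}(\bar{e},e)$ satisfies $\tilde\phi(x_{0},T')=0$ and $\partial_{x^{i}}\tilde\phi(x_{0},T')=0$ for all $i$. Setting $a_{ij}:=\partial_{x^{i}}\partial_{x^{j}}\tilde\phi(x_{0},T')$ (so that the curvature satisfies $F_{\bar{k}j}(x_{0},T')=a_{jk}/4$), expanding \eqref{psquare} gives
\[\frac{|\mathcal{P}_{p}(t)|^{2}}{4(T'-t)}\;\longrightarrow\; u^{T}\eta(x_{0},T')u\]
pointwise in $u$ as $t\to T'^{-}$, where $\eta(x_{0},T')_{\bar{j}k}=g_{\bar{j}k}(x_{0})+\tfrac{1}{16}a_{k\ell}g^{\ell\bar{m}}(x_{0})a_{mj}$. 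Simultaneously, the cut-off $\tilde{f}(4|\mathcal{P}|/\lambda r)\to 1$ since its argument tends to $0$, and the continuous density $\sqrt{\det g\,\det\eta}$ converges to its value at $(x_{0},T')$. After applying dominated convergence, the $y$-integration contributes $\mathrm{Vol}_{g}(B(r')_{p})=\sqrt{\det g(x_{0})}\,\mathrm{Vol}_{\mathrm{std}}(B(r'))$, and the $u$-integration is the Gaussian $\int_{\mathbb{R}^{n}}\exp(-u^{T}\eta(x_{0},T')u)\,du=\pi^{n/2}/\sqrt{\det\eta(x_{0},T')}$. Collecting factors produces $\lim\Theta(\tilde{h},Q,t)=2^{-n}\mathrm{Vol}_{g}(B(r')_{p})$, so the stated limit equals $(2\sqrt{2})^{n}\cdot 2^{-n}=2^{n/2}\geq 1$, as required.

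The main technical obstacle is producing a $k$-uniform integrable majorant to justify the dominated-convergence step. The cut-off $\tilde{f}$ controls the far-field region where $|x-x_{0}|\gtrsim \lambda r$. In the near field, the lower bound \eqref{lowbdofP} gives $|\mathcal{P}|\geq 2\lambda|x-x_{0}|$ up to lower-order terms; combined with continuity of $a_{ij}(x,t)$ and positive-definiteness of $\eta(x_{0},T')$ on a small neighborhood of $(x_{0},T')$, this produces a Gaussian majorant $C\exp(-c|u|^{2})$ with constants $c,C>0$ independent of $t$ sufficiently close to $T'$. Once this majorant is in place, the pointwise limit above passes under the integral and the Gaussian computation completes the proof.
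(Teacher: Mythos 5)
Your approach is essentially the paper's: rescale parabolically at $(x_{0},T')$, use that $A_{Q}$ kills the zeroth and first order terms of $\tilde\phi$ so that $|\mathcal{P}|^{2}/(4(T'-t))\to \eta_{\bar{i}j}(x_{0})u^{i}u^{j}$ in the blown-up variable, and evaluate the Gaussian integral. Two remarks. First, a concrete numerical error: you write $\mathrm{Vol}_{g}(B(r')_{p})=\sqrt{\det g(x_{0})}\,\mathrm{Vol}_{\mathrm{std}}(B(r'))$, but the induced Riemannian metric on the $y$-slice is $g(\partial_{y^{i}},\partial_{y^{j}})=g_{i\bar{j}}+g_{\bar{i}j}=2g_{\bar{i}j}$, so $\mathrm{Vol}_{g}(B(r')_{p})=2^{n/2}\sqrt{\det g(x_{0})}\,\mathrm{Vol}_{\mathrm{std}}(B(r'))$. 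With this correction your final constant is $(2\sqrt{2})^{n}\cdot 2^{-n}\cdot 2^{-n/2}=1$, not $2^{n/2}$; this is exactly the normalization the paper is engineered to achieve (the Gaussian density of a smooth point must equal $1$, otherwise the hypothesis $\bar{\Theta}\leq 1+\varepsilon$ of the main theorem would be vacuous). The inequality $\geq 1$ of the lemma survives your slip only by accident. Second, you aim at the exact limit and therefore need the $t$-uniform integrable majorant you flag as the main obstacle; the paper avoids this entirely by proving only the lower bound: since the integrand is nonnegative, it restricts to a ball $B(N)$ in the rescaled variable where the convergence is uniform, obtains a lower bound by the Gaussian integral over $B(N)$, and then lets $N\to\infty$. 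Your majorant sketch (using $|\mathcal{P}|\geq 2\lambda|x-x_{0}|$ from \eqref{lowbdofP}, which is exact rather than ``up to lower-order terms,'' together with boundedness of $\sqrt{\det g\det\eta}$ on the compact support of the cut-off for $t\in[0,T']$ with $T'<T$) does work, but it is unnecessary for the stated one-sided inequality.
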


\begin{proof}
Note that we also rescale the K\"ahler metric on $X$ as $kg$ implicitly when we use the rescaled flow $D_{k}^{T'}\tilde{h}$. 
We will see how each quantity in the definition of $\Theta$ changes by this rescaling procedure. 
It's easy to see that $\lambda(kg)=\sqrt{k}\lambda(g)$. 
By \eqref{psquare}, we can see that $|\mathcal{P}((D_{k}^{T'}\tilde{h})(-1))|^2=k|\mathcal{P}(\tilde{h}(T'-1/k))|^2$. 
By Proposition \ref{scinv}, 
\[
\begin{aligned}
d\mu((D_{k}^{T'}\tilde{h})(-1))=&|\zeta(k\omega,(D_{k}^{T'}\tilde{h})(-1))|(k\omega)^{n}/n!\\
=&k^{n}|\zeta(\omega,\tilde{h}(T'-1/k))|\omega^{n}/n!=k^{n}d\mu(\tilde{h}(T'-1/k)). 
\end{aligned}
\]
Substituting these into the definition of $\Theta(D_{k}^{T'}\tilde{h},Q',-1)$, we have 
\[
\begin{aligned}
\Theta(D_{k}^{T'}\tilde{h},Q',-1)=&\int_{U}\frac{k^{n}}{(4\pi)^{n/2}}\exp\left(-\frac{k|\mathcal{P}_{x_{0}}(\tilde{h}(t_{k}))|^2}{4}\right)\\
&\hspace{10mm} \times\tilde{f}\left(\frac{|\mathcal{P}_{x_{0}}(\tilde{h}(t_{k}))|}{\lambda(g) r}\right)d\mu(\tilde{h}(t_{k}))
\end{aligned}
\]
where $t_{k}:=T'-1/k$. 
Dividing the both hand side by $(2\sqrt{2})^{-n}\mathrm{Vol}_{kg}(B(r')_{p})$ noting \eqref{voloftorus} implies that 
\begin{equation}\label{thetaeq1}
\begin{aligned}
&\frac{(2\sqrt{2})^{n}}{\mathrm{Vol}_{kg}(B(r')_{p})}\Theta(D_{k}^{T'}\tilde{h},Q',-1)\\
=&\frac{(2\sqrt{2})^{n}}{\mathrm{Vol}_{g}(B(r')_{p})}\int_{X}\frac{\sqrt{k}^{n}}{(4\pi)^{n/2}}
\exp\left(-\frac{k|\mathcal{P}_{x_{0}}(\tilde{h}(t_{k}))|^2}{4}\right)\\
&\hspace{25mm} \times \tilde{f}\left(\frac{|\mathcal{P}_{x_{0}}(\tilde{h}(t_{k}))|}{\lambda(g) r}\right)|\zeta(\omega,\tilde{h}(t_{k}))|\frac{\omega^{n}}{n!}
\end{aligned}
\end{equation}
with 
\[\frac{\omega^{n}}{n!}=(-1)^{n(n-1)/2}\det(g_{\bar{i}j})
dx^{1}\wedge\dots\wedge dx^{n}\wedge dy^{1}\wedge\dots\wedge dy^{n}. \]
Thus, by \eqref{psquare}, we have 
\begin{equation*}
|\mathcal{P}_{x_{0}}(\tilde{h}(t_{k}))|^2(x)
=4g_{\bar{p}q}(x)(x^{p}-x_{0}^{p})(x^{q}-x_{0}^{q})
+g^{p\bar{q}}(x)(\nabla_{p}\tilde{\phi}(t_{k}))(x)(\nabla_{\bar{q}}\tilde{\phi}(t_{k}))(x). \end{equation*}
Let $\tilde{x}^{i}:=\sqrt{k} (x^{i}-x_{0}^{i})$. Then, we have 
\begin{equation}\label{psquarek}
\begin{aligned}
|\mathcal{P}(\tilde{h}(t_{k}))|^2(x)
=&4g_{\bar{p}q}\left(\frac{\tilde{x}}{\sqrt{k}}+x_{0}\right)
\frac{\tilde{x}^{p}}{\sqrt{k}}\frac{\tilde{x}^{q}}{\sqrt{k}}\\
&+g^{p\bar{q}}\left(\frac{\tilde{x}}{\sqrt{k}}+x_{0}\right)\times 
(\nabla_{p}\tilde{\phi}(t_{k}))\left(\frac{\tilde{x}}{\sqrt{k}}+x_{0}\right)\\
&\times (\nabla_{\bar{q}}\tilde{\phi}(t_{k}))\left(\frac{\tilde{x}}{\sqrt{k}}+x_{0}\right).  
\end{aligned}
\end{equation}
Since $t_{k}\to T'$ as $k\to\infty$ and $T'$ is strictly smaller than $T$, 
it follows that the right hand side of \eqref{psquarek} uniformly converges to 
\[g^{p\bar{q}}(x_{0})(\nabla_{p}\tilde{\phi}(T'))(x_{0})(\nabla_{\bar{q}}\tilde{\phi}(T'))(x_{0})\]
as functions with variables $\tilde{x}$ on each compact set in $\mathbb{R}^{n}$, 
and this value is actually zero by the definition of $\tilde{\phi}=A_{Q}\phi$, see \eqref{tildephi}. 
By \eqref{psquarek}, we have 
\begin{equation}\label{psquarekk}
\begin{aligned}
k|\mathcal{P}(\tilde{h}(t_{k}))|^2(x)
=&4g_{\bar{p}q}\left(\frac{\tilde{x}}{\sqrt{k}}+x_{0}\right)
\tilde{x}^{p}\tilde{x}^{q}\\
&+g^{p\bar{q}}\left(\frac{\tilde{x}}{\sqrt{k}}+x_{0}\right)\times 
\sqrt{k}(\nabla_{p}\tilde{\phi}(t_{k}))\left(\frac{\tilde{x}}{\sqrt{k}}+x_{0}\right)\\
&\times \sqrt{k}(\nabla_{\bar{q}}\tilde{\phi}(t_{k}))\left(\frac{\tilde{x}}{\sqrt{k}}+x_{0}\right). \end{aligned}
\end{equation}
Then, it follows that the right hand side of \eqref{psquarekk} uniformly converges to 
\begin{equation}\label{psquarekkk}
4g_{\bar{p}q}(x_{0})\tilde{x}^{p}\tilde{x}^{q}
+g^{p\bar{q}}(x_{0})\left(\frac{\partial\nabla_{p}\tilde{\phi}(T')}{\partial x^{i}}(x_{0})\tilde{x}^{i}\right)
\left(\frac{\partial\nabla_{\bar{q}}\tilde{\phi}(T')}{\partial x^{j}}(x_{0})\tilde{x}^{j}\right)
\end{equation}
as functions with variables $\tilde{x}$ on each compact set in $\mathbb{R}^{n}$. 
Since $\partial\tilde{\phi}/\partial y^{k}=0$ by assumption, we have 
$\partial(\nabla_{p}\tilde{\phi}(T'))/\partial x^{i}
=2\nabla_{\bar{i}}\nabla_{p}\tilde{\phi}(T')=2F_{\bar{i}p}(\tilde{h}_{T'})$ and 
similarly $\partial(\nabla_{\bar{q}}\tilde{\phi}(T'))/\partial x^{j}
=2F_{\bar{q}j}(\tilde{h}_{T'})$. 
Thus, \eqref{psquarekkk} is equal to 
\[4\left(g_{\bar{i}j}+g^{p\bar{q}}F_{\bar{i}p}(\tilde{h}_{T'})F_{\bar{q}j}(\tilde{h}_{T'})\right)(x_{0})\tilde{x}^{i}\tilde{x}^{j}=4\eta(\tilde{h}_{T'})_{\bar{i}j}(x_{0})\tilde{x}^{i}\tilde{x}^{j}, \]
where $\eta(\tilde{h}_{T'})$ is the induced metric of $\tilde{h}_{T'}$, see \eqref{defofeta}. 
Put $A_{ij}:=\eta(\tilde{h}_{T'})_{\bar{i}j}(x_{0})$ for notational simplicity. 

In \cite{JacobYau}, it is proved that $|\zeta|=\sqrt{\det(I+K^2)}$. 
From this fact and the definition of $K$, it follows that 
$|\zeta|=(\sqrt{\det g_{\bar{i}j}})^{-1}\sqrt{\det \eta_{\bar{i}j}}$. 
Thus, combining everything together, we see that 
the limit of the right hand side of \eqref{thetaeq1} as $k\to\infty$ 
is greater than or equal to 
\[\frac{\int_{B(r')_{p}}\sqrt{\det g_{\bar{i}j}(x_{0})}dy}{\mathrm{Vol}_{g}(B(r')_{p})}\frac{(2\sqrt{2})^{n}}{(4\pi)^{n/2}}\int_{B(N)}
\exp(-A_{ij}\tilde{x}^{i}\tilde{x}^{j})\sqrt{\det A_{ij}}d\tilde{x}\]
for all sufficiently large open ball $B(N)$ ($N\in\mathbb{N}$). 
Letting $N\to\infty$ with the standard Gaussian integral formula implies 
this converges to  
\begin{equation}\label{avalue}
\frac{\int_{B(r')_{p}}\sqrt{\det g_{\bar{i}j}(x_{0})}dy}{\mathrm{Vol}_{g}(B(r')_{p})}\frac{(2\sqrt{2})^{n}}{(4\pi)^{n/2}}\pi^{n/2}. 
\end{equation}
Finally, we see that $g(\frac{\partial}{\partial y^{i}},\frac{\partial}{\partial y^{j}})=g_{i\bar{j}}+g_{\bar{i}j}=2g_{\bar{i}j}$ by 
the semi-flat assumption. Thus, the volume form of $B(r')_{p}$ is $\sqrt{2^{n}\det g_{\bar{i}j}(x_{0})}dy$. 
Then, \eqref{avalue} is actually $1$, and the proof is complete. 
\end{proof}
Combining \eqref{densityinv} and \eqref{densitygeq1}, we see the following theorem. 
\begin{theorem}
For $Q=(p,T')\in U_{r/4}\times(0,T)$, we have 
\begin{equation}\label{limdensityg1} 
1\leq \bar{\Theta}(h,Q). 
\end{equation}
\end{theorem}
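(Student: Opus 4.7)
The plan is to observe that the two preceding lemmas in the excerpt have essentially already reduced this statement to a one-line combination. My proof proposal is just to glue \eqref{densityinv} and \eqref{densitygeq1} together and take $k \to \infty$.

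More precisely, I would first recall from the definition of the Gaussian density that $\bar{\Theta}(h,Q) = \lim_{t\to T'} \bar{\Theta}(h,Q,t)$, and that this limit exists because Theorem \ref{mono} applied to the translated flow $A_Q h$ shows that $t \mapsto \Theta(A_Q h, Q, t) + C(T'-t)$ is monotone decreasing. Thus, to compute $\bar{\Theta}(h,Q)$, it suffices to evaluate the limit along any sequence $t_k \nearrow T'$; I would choose $t_k := T' - 1/k$, so that the right hand side of \eqref{densityinv} tends to $\bar{\Theta}(h,Q)$ (up to the normalization $(2\sqrt{2})^n/\mathrm{Vol}_g(B(r')_p)$, which is already built into $\bar{\Theta}$).

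Next, I would apply the scaling identity \eqref{densityinv} to rewrite this limit in terms of the rescaled flows $D_k^{T'}\tilde{h}$ evaluated at the time parameter $s = -1$, namely
\[
\bar{\Theta}(h,Q) = \lim_{k\to\infty} \frac{(2\sqrt{2})^{n}}{\mathrm{Vol}_{kg}(B(r')_{p})}\Theta(D_{k}^{T'}\tilde{h},Q',-1).
\]
The inequality \eqref{densitygeq1} then applies immediately and yields $\bar{\Theta}(h,Q) \geq 1$.

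There is no real obstacle here: all the serious work (the parabolic rescaling of the flow together with the pointwise convergence of the rescaled Gaussian weight and induced metric to the Gaussian weighted by the induced metric $\eta(\tilde{h}_{T'})_{\bar{i}j}(x_0)$, producing the standard Gaussian integral value $1$) has already been carried out in the lemma establishing \eqref{densitygeq1}. The only thing worth emphasizing in the write-up is the identification of the right hand side of \eqref{densityinv} with $\bar{\Theta}(h,Q,T'-1/k)$, which in turn converges to $\bar{\Theta}(h,Q)$ via the monotonicity-based existence of the limit.
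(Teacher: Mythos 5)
Your proposal is correct and is exactly the paper's argument: the paper also proves this theorem by combining \eqref{densityinv} with \eqref{densitygeq1}, using the monotonicity-based existence of the limit $\bar{\Theta}(h,Q)=\lim_{t\to T'}\bar{\Theta}(h,Q,t)$ to identify it with the limit along $t_{k}=T'-1/k$. No gaps.
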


In the proof of the main theorem given in Section \ref{proofofmainthm}, 
we need an analog of Theorem \ref{mono} in the case where $X$ is noncompact. 
Thus, in what follows, we assume that $r=\infty$ in the setting mentioned just before Theorem \ref{mono}, 
that is, $U\cong \mathbb{R}^{n}\times B(r')$, 
and further assume that $\lambda=\lambda(g)\in (0,\infty)$. 
Assume that $Q=(p,T')\in U\times(0,T)$ is given. 
For $j\in\mathbb{N}$, let $\tilde{f}_{j}:\mathbb{R}\to [0,1]$ be a smooth cut-off function 
which is strictly decreasing on the interval $[j,j+1]$ satisfying 
\[
\tilde{f}_{j}(x)=
\begin{cases}
1 \quad\mbox{if}\quad x\in(-\infty,j]\\
0 \quad\mbox{if}\quad x\in[j+1,\infty)
\end{cases}
\quad\mbox{and}\quad
|\tilde{f}_{j}'|+|\tilde{f}_{j}''|\leq C'
\]
for some constant $C'>0$ which does not depend on $j$. 
Define $f_{j}:X\times[0,T')\to \mathbb{R}$ by $f_{j}(z,t):=\tilde{f}_{j}(|\mathcal{P}_{p}(z,t)|/2\lambda)$. 
Then, by \eqref{lowbdofP}, $f_{j}(\,\bullet\,,t)$ satisfies (a) and (b) of \eqref{condiforcuttoff} (with $r=\infty$) for each $t$. 
We denote $\Theta_{f_{j}}(h,Q,t)$ by $\Theta_{j}(h,Q,t)$ simply, that is, 
\[
\Theta_{j}(h,Q,t):=\int_{U}\frac{1}{(4\pi (T'-t))^{n/2}}\exp\left(-\frac{|\mathcal{P}_{p}(t)|^2}{4(T'-t)}\right)
\tilde{f}_{j}\left(\frac{|\mathcal{P}_{p}(z,t)|}{2\lambda}\right)d\mu(h(t)).
\]
\begin{theorem}
It follows that 
\begin{equation}\label{monofrom3}
\frac{d}{dt}\Theta_{j}(h,Q,t)
\leq -\int_{U}\left|\mathcal{H}+\frac{\mathcal{P}^{\bot}}{2(T'-t)}\right|^2f_{j}\varphi d\mu(h)+\frac{C'}{\lambda^2}\int_{U}\varphi \chi_{A_{j}(t)}d\mu(h), 
\end{equation}
where
\[\varphi:=\frac{1}{(4\pi (T'-t))^{n/2}}\exp\left(-\frac{|\mathcal{P}(t)|^2}{4(T'-t)}\right)\]
and $\chi_{A_{j}(t)}$ is the characteristic function of $A_{j}(t):=\{\,z\in U\mid \, 2\lambda j\leq |\mathcal{P}(z,t)|\leq 2\lambda (j+1)\,\}$. 
\end{theorem}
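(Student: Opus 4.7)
The plan is to mirror the proof of Theorem \ref{mono}, the only essential difference being that in the noncompact regime we cannot replace the error integral by a global constant via a finite-volume bound.

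First I would apply Theorem \ref{premono2} with $f = f_j$. This is legitimate because $f_j(\,\bullet\,,t)$ satisfies conditions (a) and (b) of \eqref{condiforcuttoff} with $r=\infty$: condition (a) holds since $|\mathcal{P}|^2$ is $y$-invariant by \eqref{psquare}, and condition (b) holds because on the support of $f_j$ one has $|\mathcal{P}| \leq 2\lambda(j+1)$, which by \eqref{lowbdofP} forces $|x - x_0| \leq j+1$, so the support is compact in the $x$-direction. The identity of Theorem \ref{premono2} then reads
\[
\frac{d}{dt}\Theta_j(h,Q,t) = -\int_U \left|\mathcal{H} + \frac{\mathcal{P}^{\bot}}{2(T'-t)}\right|^2 f_j\varphi\, d\mu(h) + \int_U (\partial_t f_j - \Delta_\eta f_j)\,\varphi\, d\mu(h),
\]
so it remains to show that $\partial_t f_j - \Delta_\eta f_j \leq (C'/\lambda^2)\chi_{A_j(t)}$ pointwise.

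Setting $Y := |\mathcal{P}|/(2\lambda)$ and applying the chain rule to $f_j = \tilde{f}_j(Y)$, one obtains, exactly as in the proof of Theorem \ref{mono} with $4/(\lambda r)$ replaced by $1/(2\lambda)$, a three-term expression for $\partial_t f_j - \Delta_\eta f_j$ involving $\tilde{f}_j'(Y)$, $\tilde{f}_j''(Y)$, $\partial_t|\mathcal{P}|^2 - \Delta_\eta|\mathcal{P}|^2$, $|\partial|\mathcal{P}|^2|^2_\eta$, and negative powers of $|\mathcal{P}|$ and $\lambda$. Using $\partial_t|\mathcal{P}|^2 - \Delta_\eta|\mathcal{P}|^2 = -2n$ (established in the proof of Theorem \ref{premono2}), \eqref{used4} to bound $|\partial|\mathcal{P}|^2|^2_\eta \leq 4|\mathcal{P}|^2$, and the fact that $|\tilde{f}_j'|, |\tilde{f}_j''| \leq C'$ vanish outside $[j,j+1]$, each of the three terms is supported in $A_j(t)$; and on $A_j(t)$ (for $j \geq 1$) the inequality $|\mathcal{P}| \geq 2\lambda j \geq 2\lambda$ forces each term to be bounded by a constant multiple of $1/\lambda^2$. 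Absorbing the purely dimensional factors into $C'$ yields the desired pointwise bound, and substituting back gives the stated inequality since $\varphi \geq 0$.

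The only real subtlety to verify is that the boundary contribution in the divergence step hidden inside Theorem \ref{premono2} (which passes through Theorem \ref{corofdivthm}) still vanishes once $r = \infty$. The $\partial B(r)\times B(r')$ portion disappears because, as noted above, $f_j$ has compact support in the $x$-direction; the $B(r)\times\partial B(r')$ portion cancels by the same parity argument used in the proof of Theorem \ref{corofdivthm}, namely that the integrand is pure imaginary while the divergence identity forces the relevant integral to be real. No new ingredient beyond those used in the proof of Theorem \ref{mono} is required; the argument is essentially a transcription of that proof, with the final step invoking the global volume bound $V(h(t)) \leq V(h(0))$ simply omitted, since it is inapplicable when $X$ is noncompact.
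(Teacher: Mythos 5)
Your proof is correct and follows exactly the route the paper takes: the paper's own proof consists of the one-line observation that ``by a similar computation as in the proof of Theorem \ref{mono}'' one gets $\partial_{t}f_{j}-\Delta_{\eta}f_{j}\leq (C'/\lambda^{2})\chi_{A_{j}(t)}$, after which Theorem \ref{premono2} with $f=f_{j}$ gives the inequality. Your additional checks (that $f_{j}$ satisfies (a) and (b) of \eqref{condiforcuttoff} via \eqref{lowbdofP}, and that the boundary contributions still vanish when $r=\infty$) are consistent with what the paper asserts elsewhere and do not change the argument.
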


\begin{proof}
By a similar computation as in the proof of Theorem \ref{mono}, we can see that 
\[
\frac{\partial}{\partial t}f_{j}-\Delta_{\eta}f_{j}
\leq \frac{C'}{\lambda^2}\chi_{A_{j}(t)}. 
\]
Then, by Theorem \ref{premono2}, we have 
\[
\frac{d}{dt}\Theta_{j}(h,Q,t)
\leq -\int_{U}\left|\mathcal{H}+\frac{\mathcal{P}^{\bot}}{2(T'-t)}\right|^2f_{j}\varphi d\mu(h)+\frac{C'}{\lambda^2}\int_{U}\varphi \chi_{A_{j}(t)}d\mu(h), 
\]
and the proof is complete. 
\end{proof}

Then, if there exists $C''>0$ so that $\int_{U}\varphi d\mu(h)<C''$ for all $t\in [0,T')$, 
the second term on the right hand side of \eqref{monofrom3} is bounded from above by $C'C''/\lambda^2=:C'''$. 
Hence, $\Theta_{j}(h,Q,t)+C'''(T'-t)$ is monotonically decreasing and its limit exists as $t\to T'$. 
Moreover, putting 
\[\bar{\Theta}_{j}(h,Q,t):=\frac{(2\sqrt{2})^{n}}{\mathrm{Vol}_{g}(B(r')_{p})}\Theta_{j}(A_{Q}h,Q,t), \]
we can also prove that 
\begin{equation}\label{densitygeq2}
1\leq \lim_{t\to T'}\bar{\Theta}_{j}(h,Q,t), 
\end{equation}
whenever $T'\in (0,T)$, 
by the similar way as the proof of \eqref{limdensityg1}. 

The following corollary is used directly in the proof of 
main theorem given in Section \ref{proofofmainthm}. 
Put 
\begin{equation}\label{densitynoncomp}
\begin{aligned}
&\Theta_{\infty}(h,Q,t):=\int_{U}\frac{1}{(4\pi (T'-t))^{n/2}}\exp\left(-\frac{|\mathcal{P}(t)|^2}{4(T'-t)}\right)d\mu(h(t)), \\
&\bar{\Theta}_{\infty}(h,Q,t):=\frac{(2\sqrt{2})^{n}}{\mathrm{Vol}_{g}(B(r')_{p})}\Theta_{\infty}(A_{Q}h,Q,t).
\end{aligned}
\end{equation}
We do not knot whether $\Theta_{\infty}(h,Q,t)$ is finite or not 
since the support of the integrand is noncompact for each $t$. 

\begin{corollary}\label{impcor}
Assume $\bar{\Theta}_{\infty}(h,Q,t)\leq 1$ for all $t\in [a,T')$ for some $a<T'$. 
Further assume that $A_{Q}h=h$ for simplicity.  
Then, $h_{t}$ satisfies 
\[\mathcal{H}(h_{t})=-\frac{1}{2(T'-t)}\mathcal{P}^{\bot}(h_{t})\]
for all $t\in [a,T')$
\end{corollary}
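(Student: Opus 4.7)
The plan is to pass to the limit $j \to \infty$ in the truncated monotonicity formula \eqref{monofrom3}, using the hypothesis $\bar{\Theta}_\infty(h,Q,t) \leq 1$ both to control the error arising from the cutoff and to squeeze $\bar{\Theta}_j$ to the value $1$ at the endpoint $t = T'$.

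First, since $A_Q h = h$ and since $f_j \nearrow 1$ monotonically as $j \to \infty$, monotone convergence gives $\bar{\Theta}_j(h,Q,t) \nearrow \bar{\Theta}_\infty(h,Q,t)$ for each $t \in [a,T')$. The hypothesis therefore forces $\bar{\Theta}_j(h,Q,t) \leq 1$. On the other hand, \eqref{densitygeq2} supplies the lower bound $\liminf_{t\to T'}\bar{\Theta}_j(h,Q,t) \geq 1$ for each $j$. Combining these, I obtain
\[\lim_{t \to T'}\bar{\Theta}_j(h,Q,t) = 1 \quad \text{for every } j \in \mathbb{N}.\]

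Next, multiplying \eqref{monofrom3} by the positive constant $c := (2\sqrt{2})^n/\mathrm{Vol}_g(B(r')_p)$ and integrating over $s \in [a,s_0]$ for $s_0 < T'$, then letting $s_0 \to T'$, I get
\[1 - \bar{\Theta}_j(h,Q,a) \leq -c\int_a^{T'}\!\!\int_U \left|\mathcal{H} + \frac{\mathcal{P}^\bot}{2(T'-s)}\right|^2 f_j\, \varphi\, d\mu(h)\, ds + \frac{cC'}{\lambda^2}\int_a^{T'}\!\!\int_U \varphi\, \chi_{A_j(s)}\, d\mu(h)\, ds.\]
Now I take $j \to \infty$. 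Since $\bar{\Theta}_\infty(h,Q,s) \leq 1$ uniformly, $\int_U \varphi\, d\mu(h) \leq c^{-1}$ is uniformly bounded in $s$, so $s \mapsto \int_U \varphi\, d\mu(h)$ is integrable on $[a,T')$. Because $|\mathcal{P}(z,s)| < \infty$ for each $(z,s)$, $\chi_{A_j(s)} \to 0$ pointwise; dominated convergence then kills the error term. Monotone convergence on the left-hand integrand (together with $f_j \nearrow 1$) and continuity of $\bar{\Theta}_\infty$ handle the remaining terms, yielding
\[0 \leq 1 - \bar{\Theta}_\infty(h,Q,a) \leq -c\int_a^{T'}\!\!\int_U \left|\mathcal{H} + \frac{\mathcal{P}^\bot}{2(T'-s)}\right|^2 \varphi\, d\mu(h)\, ds \leq 0.\]

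Consequently every term vanishes. Since $\varphi > 0$ pointwise, the integrand $|\mathcal{H} + \mathcal{P}^\bot/(2(T'-s))|^2$ must vanish almost everywhere on $U \times [a,T')$; by smoothness of $h_t$ in $t$ and of all geometric objects in $z$, it vanishes identically, producing the desired equation.

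The main obstacle is the limit passage in the noncompact setting: I must verify that $\bar{\Theta}_\infty \leq 1$ really does supply enough integrability on the weighted measure $\varphi\, d\mu(h)$ to justify both monotone convergence on the left and dominated convergence on the right, and that the $s$-integrals of these quantities remain controlled as $s \to T'$. Once this uniform integrability is in place, the squeeze argument based on $\lim_{t \to T'}\bar{\Theta}_j = 1$ is what makes the identity fall out.
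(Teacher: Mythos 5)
Your proposal is correct and follows essentially the same route as the paper's proof: integrate the truncated monotonicity formula \eqref{monofrom3} over $[a,T')$, use $\bar{\Theta}_j\leq\bar{\Theta}_\infty\leq 1$ together with the lower bound \eqref{densitygeq2} to make the boundary terms nonpositive, and let $j\to\infty$, killing the cutoff error by dominated convergence (the paper does this via $\sum_j a_j(t)=\bar{\Theta}_\infty(h,Q,t)\leq 1$, which is the same uniform integrability you invoke). No gaps.
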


\begin{proof}
Integrate the both hand side of \eqref{monofrom3} 
on $[a,T'-\varepsilon]$ and multiply it by  
$(2\sqrt{2})^{n}/\mathrm{Vol}_{g}(B(r')_{p})$. 
Then, letting $\varepsilon\to 0$ implies that 
\begin{equation}\label{monofrom4}
\begin{aligned}
&\frac{(2\sqrt{2})^{n}}{\mathrm{Vol}_{g}(B(r')_{p})}\int_{a}^{T'}\int_{U}\left|\mathcal{H}+\frac{\mathcal{P}^{\bot}}{2(T'-t)}\right|^2f_{j}\varphi d\mu(h)dt\\
\leq &\bar{\Theta}_{j}(h,Q,a)-\lim_{t\to T'}\bar{\Theta}_{j}(h,Q,t)+\frac{C'}{\lambda^2}\int_{a}^{T'}\left(\frac{(2\sqrt{2})^{n}}{\mathrm{Vol}_{g}(B(r')_{p})}\int_{U}\varphi \chi_{A_{j}(t)}d\mu(h)\right)dt\\
\leq & \frac{C'}{\lambda^2}\int_{a}^{T'}\left(\frac{(2\sqrt{2})^{n}}{\mathrm{Vol}_{g}(B(r')_{p})}\int_{U}\varphi \chi_{A_{j}(t)}d\mu(h)\right)dt, 
\end{aligned}
\end{equation}
where the last inequality follows from $\bar{\Theta}_{j}(h,Q,t)\leq \bar{\Theta}_{\infty}(h,Q,t)\leq 1$ and \eqref{densitygeq2}. 
For $j\geq 1$, put 
\[
a_{j}(t):=\frac{(2\sqrt{2})^{n}}{\mathrm{Vol}_{g}(B(r')_{p})}\int_{U}\varphi \chi_{A_{j}(t)}d\mu(h), 
\]
and if $j=0$ we define $a_{0}$ by putting $A_{0}:=\{\,z\in X\mid \, |\mathcal{P}(z,t)|\leq 2\lambda\,\}$. 
Then, it is easy to see that 
\[\sum_{j=0}^{\infty}a_{j}(t)=\bar{\Theta}_{\infty}(h,Q,t)\leq 1. \]
Thus, by Lebesgue's dominated convergence theorem, 
the right hand side of \eqref{monofrom4} converges to $0$. 
Moreover, also by Lebesgue's dominated convergence theorem, 
the left hand side of \eqref{monofrom4} converges to 
\[\frac{(2\sqrt{2})^{n}}{\mathrm{Vol}_{g}(B(r')_{p})}\int_{a}^{T'}\int_{U}\left|\mathcal{H}+\frac{\mathcal{P}^{\bot}}{2(T'-t)}\right|^2\varphi d\mu(h)dt. \]
Thus, we know that this value is zero and the proof is complete. 
\end{proof}

\section{On self-shrinker}\label{sec:self-shrinker}
In this section, we give the definition of self-shrinker for 
line bundle mean curvature flows and prove that 
self-shrinkers have Liouville type properties. 

We assume that $X:=\mathbb{R}^{n}\times B(r')$. 
Then, by the inclusion $X\ni (x,y)\mapsto z=x+\sqrt{-1}y$, we admit the standard complex structure on $X$. 
Assume that a K\"ahler metric $g$ on $X$ is given and 
its coefficients are constants satisfying $g_{\bar{i}j}=g_{\bar{j}i}$. 
Then, $(X,g)$ satisfies semi-flat condition globally on $\mathbb{R}^{n}$. 

\begin{definition}
Assume a Hermitian metric $h$ of the trivial line bundle over
$X=\mathbb{R}^{n}\times B(r')$ satisfies graphical condition globally on $\mathbb{R}^{n}$. 
Let $\mathcal{P}:=\mathcal{P}_{0}(h)$ be the position  section of $h$ 
centered at the origin. 
In addition, if $h$ satisfies 
\begin{equation}\label{selshri2}
\mathcal{H}=\lambda \mathcal{P}^{\bot}, 
\end{equation}
we call $h$ a self-similar solution with coefficient $\lambda$. 
Moreover if $\lambda<0$ (resp. $\lambda>0$) we call 
$h$ a self-shrinker (resp. self-expander).  
\end{definition}

\begin{proposition}
Assume that $h$ of the trivial line bundle over
$X=\mathbb{R}^{n}\times B(r')$ satisfies graphical condition globally on $\mathbb{R}^{n}$. 
Then, $h$ is a self-similar solution with coefficient $\lambda$ 
if and only if 
\begin{equation}\label{selshri4}
\theta=2\lambda\left(\phi-\phi(0)-\frac{1}{2}x^{k}\frac{\partial \phi}{\partial x^{k}}\right)+\theta(0).
\end{equation}
\end{proposition}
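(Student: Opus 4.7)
The plan is to test the self-similar equation $\mathcal{H}=\lambda\mathcal{P}^\bot$ against the normal-bundle basis $\{\mathcal{F}_p\}_{p=1}^n$ identified earlier in the paper. Because $\mathcal{H}$ is already normal ($\mathcal{H}^\top=0$) and $\langle\overline{\mathcal{F}_p},\mathcal{P}^\top\rangle=0$, the equation is equivalent to the scalar system
\[
\langle \overline{\mathcal{F}_p},\mathcal{H}\rangle \;=\; \lambda\,\langle \overline{\mathcal{F}_p},\mathcal{P}\rangle,\qquad p=1,\ldots,n,
\]
so the whole task is to compute both pairings explicitly and integrate.

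First I would unpack $\langle\overline{\mathcal{F}_p},\mathcal{H}\rangle$ in indices. Using $\overline{F_{\bar k p}}=F_{\bar p k}$ and $\overline{g^{j\bar k}}=g^{k\bar j}$, the two contributions coming from the $T^{1,0}X$ and $\Lambda^{0,1}X$ slots collect as $(g_{\bar p\ell}+F_{\bar p k}g^{k\bar m}F_{\bar m\ell})\eta^{\ell\bar q}H_{\bar q}$, and the parenthesised factor is precisely $\eta_{\bar p\ell}$ by the definition \eqref{defofeta}. So the $\eta$'s cancel and the left side collapses to $H_{\bar p}$. Since the semi-flat and graphical hypotheses force both $g$ and $\phi$, hence $F$, $\eta$ and finally $\theta$, to be $y$-independent, we have $H_{\bar p}=\partial_{\bar p}\theta=\tfrac12\partial_{x^p}\theta$.

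For the right-hand side, plugging the definition of $\mathcal{P}$ centered at the origin into the same Hermitian pairing yields $-2F_{\bar p k}x^k+\tfrac12\partial_{x^p}\phi$. The graphical condition gives $F_{\bar p k}=\tfrac14\,\partial^2\phi/\partial x^p\partial x^k$, so this simplifies to $\partial_{x^p}\!\bigl(\phi-\tfrac12 x^k\partial_{x^k}\phi\bigr)$. Equating the two sides yields, for each $p$,
\[
\partial_{x^p}\theta \;=\; 2\lambda\,\partial_{x^p}\!\left(\phi-\tfrac{1}{2}x^k\frac{\partial\phi}{\partial x^k}\right).
\]
Since $\mathbb{R}^n$ is connected, integration produces $\theta=2\lambda\bigl(\phi-\tfrac12 x^k\partial_{x^k}\phi\bigr)+C$, and evaluating at $x=0$ fixes $C=\theta(0)-2\lambda\phi(0)$, giving exactly \eqref{selshri4}. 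Every step is reversible, so the two conditions are equivalent.

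The main obstacle is purely bookkeeping in the first step: one has to be attentive so that the $F\cdot g^{-1}\cdot F$ combination genuinely reconstructs $\eta_{\bar p\ell}$ with the right index placement, and that the graphical/semi-flat hypotheses are what legitimise the replacement $H_{\bar p}=\tfrac12\partial_{x^p}\theta$ (ruling out any $\partial_{y^p}$ contribution). No analytical difficulty is expected beyond this.
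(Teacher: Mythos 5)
Your proof is correct and follows essentially the same route as the paper's: both reduce $\mathcal{H}=\lambda\mathcal{P}^{\bot}$ to the scalar equation $H_{\bar p}=\lambda\bigl(-2x^{k}F_{\bar p k}+\nabla_{\bar p}\phi\bigr)$ and then integrate over $\mathbb{R}^{n}$ using the $y$-independence of $\phi$ and $\theta$. The only cosmetic difference is that you extract this scalar equation by pairing both sides against the normal frame $\overline{\mathcal{F}_{p}}$ (using that the Gram matrix is $\eta_{\bar p\ell}$), whereas the paper expands $\mathcal{P}^{\bot}$ in the $\mathcal{F}_{j}$-basis and matches the two components of the identity directly, noting that one component equation implies the other.
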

\begin{proof}
By \eqref{Yn1}, we have 
\[\mathcal{P}^{\bot}=\langle\overline{\mathcal{F}_{i}}, \mathcal{P}\rangle \eta^{j\bar{i}}\mathcal{F}_{j}. \]
By definition, we have 
\[
\langle\overline{\mathcal{F}_{i}}, \mathcal{P}\rangle=-2x^{k}F_{\bar{i}k}+\frac{1}{2}\frac{\partial \phi}{\partial x^{i}}. 
\]
Thus, we have 
\[
\begin{aligned}
\mathcal{P}^{\bot}=&\left\{\left(-2x^{k}F_{\bar{i}k}+\frac{1}{2}\frac{\partial \phi}{\partial x^{i}}\right)
\eta^{\bar{i}j}\left(-F_{\bar{\ell}j}g^{q\bar{\ell}}\frac{\partial}{\partial z^{q}}\right)\right\}\\
&\oplus \left\{\left(-2x^{k}F_{\bar{i}k}+\frac{1}{2}\frac{\partial \phi}{\partial x^{i}}\right)\eta^{\bar{i}j}g_{\bar{q}j}d\bar{z}^{q}\right\}.
\end{aligned}
\]
By the definition of $\mathcal{H}$, the equation \eqref{selshri2} is equivalent to 
\[
\begin{aligned}
&-g^{q\bar{k}}H_{\bar{p}}\eta^{\ell\bar{p}}F_{\bar{k}\ell}
=-\lambda \left(-2x^{k}F_{\bar{i}k}+\frac{1}{2}\frac{\partial \phi}{\partial x^{i}}\right)\eta^{j\bar{i}}F_{\bar{\ell}j}g^{q\bar{\ell}}\\
&g_{\bar{q}k}H_{\bar{\ell}}\eta^{k\bar{\ell}}
=\lambda \left(-2x^{k}F_{\bar{i}k}+\frac{1}{2}\frac{\partial \phi}{\partial x^{i}}\right)\eta^{j\bar{i}}g_{\bar{q}j}
\end{aligned}
\]
One can easily show that the second equality implies the first equality, 
and the second equality is equivalent to 
\begin{equation}\label{selshri3}
H_{\bar{i}}
=\lambda \left(-2x^{k}F_{\bar{i}k}+\nabla_{\bar{i}}\phi\right). 
\end{equation}
Moreover, one can easily see that 
\[
-2x^{k}F_{\bar{i}k}+\nabla_{\bar{i}}\phi
=2\nabla_{\bar{i}}\left(\phi-x^{k}\nabla_{k}\phi\right). 
\]
Then, by $H_{\bar{i}}=\nabla_{\bar{i}}\theta$, we have 
\[\theta=2\lambda\left(\phi-\phi(0)-\frac{1}{2}x^{k}\frac{\partial \phi}{\partial x^{k}}\right)+\theta(0), \]
and the proof is complete. 
\end{proof}

The following theorem can be considered as a kind of Liouville type theorem. 
In general, it claims that solutions of some PDE are special. 

\begin{theorem}\label{tosayquadra}
Assume that $h=\{\,h_{t}\,\}_{t\in\mathbb{R}}$ satisfies graphical condition for all time $t\in\mathbb{R}$ and 
the line bundle mean curvature flow equation on 
$X=\mathbb{R}^{n}\times B(r')$, 
that is, $\partial_{t}\phi=\theta-\hat{\theta}$ 
for some $\hat{\theta}\in\mathbb{R}$. 
Let $\mathcal{P}$ be the position section of $h_{t}$ centered at the origin. 
Furthermore, assume that each $h_{t}$ with $t\in(-\infty,0)$ is a self-shimilar solution 
with coefficient $t/2$, that is, it satisfies 
\begin{equation}\label{timedepself}
    \mathcal{H}=\frac{1}{2t}\mathcal{P}^{\bot}
\end{equation}
for all $t\in(-\infty,0)$. 
Then, $-\log h_{t}=b+a_{ij}x^{i}x^{j}$ for some $b\in \mathbb{R}$ and a symmetric matrix $A=(a_{ij})\in M(n,\mathbb{R})$. 
\end{theorem}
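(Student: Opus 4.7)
The plan is to combine the self-similar identity with the flow equation to derive an explicit scaling structure for the potential $\phi := -\log h_t$, and then use the nonlinear dependence of the angle function $\theta$ on the Hessian of $\phi$ to force a homogeneity condition that collapses $\phi$ to a quadratic polynomial.

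First, I would apply Proposition~6.2 with $\lambda = t/2$ at each $t<0$, which translates \eqref{timedepself} into the identity
\[
\theta(x,t) = t\bigl(\phi(x,t) - \phi(0,t) - \tfrac{1}{2}\,x^k\partial_k\phi(x,t)\bigr) + \theta(0,t).
\]
Subtracting the value at $x=0$ and using the flow equation $\partial_t\phi = \theta - \hat\theta$ to replace $\theta(x,t) - \theta(0,t)$ with $\partial_t\phi(x,t) - \partial_t\phi(0,t)$, the function $\Psi(x,t) := \phi(x,t) - \phi(0,t)$ satisfies the linear first-order PDE
\[
\partial_t\Psi + \frac{t}{2}\,x^k\partial_k\Psi = t\,\Psi, \qquad \Psi(0,t)=0.
\]
Solving by characteristics---the trajectories are $x(t) = x_0\,\alpha(t)/\alpha(t_0)$ with $\alpha(t) := e^{t^2/4}$, along which $\Psi$ grows as $\alpha^2$---produces the separated form $\Psi(x,t) = \alpha(t)^2\,\bar\Psi\bigl(x/\alpha(t)\bigr)$ for some smooth function $\bar\Psi$ on $\mathbb{R}^n$ with $\bar\Psi(0) = 0$.

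The crucial observation is that the spatial Hessian of $\phi$ in the real $x$-coordinates equals the Hessian of $\bar\Psi$ in $\xi := x/\alpha(t)$, since the $\alpha(t)^2$ prefactor exactly cancels the two factors of $1/\alpha(t)$ coming from the chain rule. Consequently $F_{\bar k j}$, and therefore $\theta$, depends on $(x,t)$ only through $\xi$; write $\theta(x,t) = \bar\theta(x/\alpha(t))$. Substituting into the flow equation, using $\dot\alpha = (t/2)\alpha$, and evaluating at $\xi = 0$ shows that $\partial_t\phi(0,t)$ is constant in $t$; subtracting then yields
\[
t\,e^{t^2/2}\Bigl(\bar\Psi(\xi) - \tfrac{1}{2}\,\xi\cdot\nabla\bar\Psi(\xi)\Bigr) = \bar\theta(\xi) - \bar\theta(0).
\]
Since the right-hand side is independent of $t$ while the prefactor $t\,e^{t^2/2}$ is a non-constant function of $t \in (-\infty, 0)$, both sides must vanish identically, giving $\xi\cdot\nabla\bar\Psi = 2\bar\Psi$.

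By Euler's identity together with smoothness and $\bar\Psi(0) = 0$, the homogeneity relation forces $\bar\Psi(\xi) = a_{ij}\xi^i\xi^j$ for a unique symmetric matrix $A = (a_{ij})$. Unwinding yields $\Psi(x,t) = \alpha(t)^2\,a_{ij}(x^i/\alpha(t))(x^j/\alpha(t)) = a_{ij}\,x^i x^j$, independent of $t$, so that $-\log h_t = \phi(0,t) + a_{ij}x^i x^j$ with the constant-in-$x$ piece being the $b$ in the statement. The main obstacle I anticipate is the bookkeeping of the middle step: verifying cleanly that $\theta$ depends on $(x,t)$ only through $\xi = x/\alpha(t)$, and then isolating the time-dependent prefactor so as to legitimately apply the separation-of-variables argument. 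The remaining pieces---the method of characteristics and Euler's identity---are essentially standard.
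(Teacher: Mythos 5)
Your argument starts from the wrong self-similarity coefficient, and that error is precisely what makes your separation-of-variables step work. The phrase ``coefficient $t/2$'' in the statement is a misprint: the operative hypothesis is the displayed equation \eqref{timedepself}, $\mathcal{H}=\frac{1}{2t}\mathcal{P}^{\bot}$, i.e.\ $\lambda=\frac{1}{2t}$ in \eqref{selshri2}/\eqref{selshri4}, not $\lambda=\frac{t}{2}$. (This is also the form produced by Corollary \ref{impcor} and fed into the theorem in Claim \ref{Claim7}.) With the correct coefficient your transport equation becomes $\partial_t\Psi+\frac{1}{2t}x^k\partial_k\Psi=\frac{1}{t}\Psi$, whose characteristics are $x=\xi\sqrt{-t}$ and whose general solution is $\Psi(x,t)=(-t)\,\bar\Psi\bigl(x/\sqrt{-t}\bigr)$. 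Substituting this back into $\partial_t\phi=\theta-\hat\theta$, the time-dependent prefactor cancels completely: both sides reduce to $-\bar\Psi(\xi)+\tfrac12\,\xi\cdot\nabla\bar\Psi(\xi)=\bar\theta(\xi)-\bar\theta(0)$, which is just the self-similar equation restated and carries no new information. The step ``the prefactor is a non-constant function of $t$, hence both sides vanish'' then has nothing to act on, and no homogeneity relation for $\bar\Psi$ is forced. This is unavoidable: the flow equation and the shrinker equation with $\lambda=1/(2t)$ are mutually compatible for an arbitrary shrinker profile on $(-\infty,0)$, so no Liouville conclusion can follow from those two equations alone.

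The ingredient your proposal never invokes --- and the one the paper's proof hinges on --- is that $h_t$ is assumed smooth for \emph{all} $t\in\mathbb{R}$, in particular across $t=0$. The paper differentiates twice to get $\partial_t\psi=-\frac{1}{2t}x^k\partial_k\psi$ for $\psi=\partial_i\partial_j\phi$, notes that $\psi$ is constant along the characteristics $x=\xi\sqrt{-t}$, and then uses that every such characteristic collapses to the spatial origin as $t\to0^-$, so continuity of $\psi$ at $(0,0)$ forces $\psi\equiv\partial_i\partial_j\phi(0,0)$. Your framework can be repaired in the same spirit: with the correct scaling one has $\nabla_x^2\Psi(x,t)=\nabla_\xi^2\bar\Psi(\xi)$, and letting $t\to0^-$ at fixed $\xi$ while invoking continuity of $\nabla^2\phi$ at the origin forces $\nabla^2\bar\Psi$ to be constant. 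As written, however, the proposal proves a different statement (with hypothesis $\mathcal{H}=\frac{t}{2}\mathcal{P}^{\bot}$), and its key mechanism does not survive the correction.
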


\begin{proof}
Fix $i,j\in\{\,1,\dots,n\,\}$. 
Put $\phi(\,\cdot\,,t):=-\log h_{t}$. 
We remark that $y$-variable in the first component of $\phi$ can be omitted since $h$ is graphical. 
By \eqref{selshri3}, we have 
\begin{equation}\label{selshri5}
H_{\bar{i}}
=\frac{1}{2t} \left(-2x^{k}F_{\bar{i}k}+\nabla_{\bar{i}}\phi\right)
=\frac{1}{4t} \left(-x^{k}\frac{\partial^2 \phi}{\partial x^{k}\partial x^{i}}+\frac{\partial \phi}{\partial x^{i}}\right). 
\end{equation}
Since $\phi$ satisfies the line bundle mean curvature equation, we have 
$\frac{\partial}{\partial t}\frac{\partial \phi}{\partial x^{i}}=2H_{\bar{i}}$. 
Then, combining \eqref{selshri5} yields that 
\[
\frac{\partial}{\partial t}\frac{\partial \phi}{\partial x^{i}}=\frac{1}{2t} \left(-x^{k}\frac{\partial^2 \phi}{\partial x^{k}\partial x^{i}}+\frac{\partial \phi}{\partial x^{i}}\right). 
\]
Taking one more derivative of the both hand side implies 
\begin{equation}\label{selshri7}
\frac{\partial}{\partial t}\frac{\partial^2 \phi}{\partial x^{i}\partial x^{j}}=-\frac{1}{2t} x^{k}\frac{\partial^3 \phi}{\partial x^{k}\partial x^{i}\partial x^{j}}. 
\end{equation}
Put $\psi(x,t):=\frac{\partial^2 \phi}{\partial x^{i}\partial x^{j}}(x,t)$.  Then, \eqref{selshri7} is rewritten as 
\begin{equation}\label{selshri8}
\frac{\partial}{\partial t}\psi=-\frac{1}{2t} x^{k}\frac{\partial \psi}{\partial x^{k}}. 
\end{equation}
Fix $x\in \mathbb{R}^{n}$ and put $f_{x}(t):=\psi(\sqrt{-t}x,t)$ for all $t\in(-\infty,0)$. 
Then, for $t\in(-\infty,0)$, we have 
\[
\begin{aligned}
\frac{d}{dt}f_{x}(t)=&\frac{\partial \psi}{\partial x^{k}}(\sqrt{-t}x,t)\frac{-x^{k}}{2\sqrt{-t}}+\frac{\partial \psi}{\partial t}(\sqrt{-t}x,t)\\
=&\frac{1}{2t}(\sqrt{-t}x^{k})\frac{\partial \psi}{\partial x^{k}}(\sqrt{-t}x,t)+\frac{\partial \psi}{\partial t}(\sqrt{-t}x,t)=0, 
\end{aligned}
\]
where we used \eqref{selshri8} at the last equality. 
This means that $f_{x}$ is constant on $(-\infty,0)$. 
By the assumption, $f_x(t)$ is continuous up to $t=0$. 
Thus, for any $t\in (-\infty,0)$, we have 
\[\frac{\partial^2 \phi}{\partial x^{i}\partial x^{j}}(x,t)=\psi(\sqrt{-t}y,t)=f_{y}(t)=f_{y}(0)=\psi(0,0)=\frac{\partial^2 \phi}{\partial x^{i}\partial x^{j}}(0,0)=:a_{ij}, \]
where $y:=x/\sqrt{-t}$, and the right hand side does not depend on $x$ and $t$. 
Thus, we have proved that $\phi(x,t)$ is a quadratic function with respect to $x$-variables for every $t\in(-\infty,0]$ since $\phi(x,t)$ is smooth up to $t=0$. 
This implies that the angle function $\theta$ of $h_{t}$ is constant on $\mathbb{R}^{n}\times B(r')$ since the angle function is determined by the second derivatives of $\phi$. 
Then, by $\partial_{t}\phi=\theta-\hat{\theta}$, we see that $\phi$ is a constant with respect to $t$. 
By \eqref{selshri4}, we get for each $t\in (-\infty,0)$
\[\phi(x,t)-\phi(0,t)-\frac{1}{2}x^{k}\frac{\partial\phi}{\partial x^{k}}(x,t)=0\]
on $X=\mathbb{R}^{n}\times B(r')$. 
Substituting $\phi(x,t)=\phi(0,t)+c_{i}(t)x^{i}+b_{ij}x^{i}x^{j}$ and $\phi(0,t)=\phi(0,0)$ into the above PDE implies $c_{i}(t)=0$. 
Then, the proof is complete. 
\end{proof}

\begin{remark}
If $h_{t}$ satisfies \eqref{timedepself}, then the first term 
of the right hand side of \eqref{monofrom2} vanishes. 
This is similar to relations between self-shrinkers
of mean curvature flows and Huisken's monotonicity formula \cite{Huisken}, 
or between shrinking Ricci solitons of Ricci flows and 
Perelman's $\mathcal{W}$-entropy formula \cite{Perelman}. 
\end{remark}
\section{$\varepsilon$-regularity theorem}\label{proofofmainthm}
In this section we give the precise definition of $K_{3,\alpha}$-quantity 
and prove Theorem \ref{epregthm}, the $\varepsilon$-regularity theorem. 

As in the previous sections, 
let $(X,g)$ be a K\"ahler manifold with $\dim_{\mathbb{C}}X=n$ and let $\pi:L\to X$ be a holomorphic line bundle. 
Let $U\subset X$ be an open set and $[a,b)$ be an semi-open interval. 
Put $V:=U\times[a,b)$. 
In Subsection \ref{subsecmain}, 
we defined the parabolic distance from $Q=(p,t)\in V$ to 
the boundary of $V$, denoted by $\mathrm{dist}_{g}(Q,V)$, 
see \eqref{distance}. 
Moreover, to define the $K_{3,\alpha}$-quantity, 
we need to use the parabolic distance between $Q$ and 
$Q'=(p',t')\in X\times\mathbb{R}$ defined by 
\[
\mathrm{dist}_{g}(Q,Q'):=\max\left\{d_{g}(p,p'),\sqrt{|t-t'|}\right\}. 
\]
We fix a background Riemannian metric $\bar{g}$ on $X$ 
and write $B(Q):=\{\,Q'\in X\times\mathbb{R}\mid \mathrm{dist}_{\bar{g}}(Q',Q)<1\,\}$. 
Fix $0<\alpha<1$. 
Then, for a pair of a smooth function $f:V\to \mathbb{R}$ 
and a K\"ahler metric $g$ on $X$, we define 
its parabolic partial $C^{3,\alpha}$-norm at $Q\in V$ by
\[
\begin{aligned}
|(g,f)|_{3,\alpha}(Q):=&\sup_{Q'\in B(Q)\cap V}\left(|\partial_{t} f|+|\partial_{t}\nabla f|+|\nabla^3 f|\right)(Q')\\
&+\sup_{\substack{Q_{1},Q_{2}\in B(Q)\cap V\\Q_{1}\neq Q_{2}}}\frac{|\partial_{t}\nabla f(Q_{1})-\partial_{t}\nabla f(Q_{2})|}{\mathrm{dist}_{g}(Q_{1},Q_{2})^{\alpha}}\\
&+\sup_{\substack{Q_{1},Q_{2}\in B(Q)\cap V\\ Q_{1}\neq Q_{2}}}\frac{|\nabla^{3} f(Q_{1})-\nabla^{3} f(Q_{2})|}{\mathrm{dist}_{g}(Q_{1},Q_{2})^{\alpha}}. 
\end{aligned}
\]

\begin{remark}
Actually, $|(g,f)|_{3,\alpha}$ is not a norm in the strict sense. 
Since it clearly depends on the metric $g$, the symbol $g$ is included in $|(g,f)|_{3,\alpha}$. 
We remark that $\nabla$ is the Levi-Civita connection with respect to $g$ and we measure norms of tensors and $\mathrm{dist}_{g}(Q_{1},Q_{2})$ by $g$, but $B(Q)$ is always defined by the fixed background metric $\bar{g}$. 
We also remark that $|(g,f)|_{3,\alpha}$ is almost the usual 
parabolic $C^{3,\alpha}$-norm, however, 
$|f|$, $|\nabla f|$ and $|\nabla^{2}f|$ are not included in $|(g,f)|_{3,\alpha}$. 
\end{remark}

Following Definition \ref{defofscaling}, 
we define the $\lambda$-parabolic scaling of $(g,f)$ at $t=t_{0}$ for $\lambda\in\mathbb{N}$ by 
\[D_{\lambda}^{t_{0}}(g,f):=(\lambda g,f_{\lambda})\quad\mbox{with}\quad f_{\lambda}(\,\cdot\,,t):=\lambda f(\,\cdot\,,t_{0}+t/\lambda), \]
where $f_{\lambda}(t)$ is defined for $t\in [\lambda (a-t_{0}),\lambda(b-t_{0}))$. 
We also define 
\[D^{t_{0}}_{\lambda}(V):=U\times  [\lambda (a-t_{0}),\lambda(b-t_{0})). \]
It is easy to see that 
\begin{equation}\label{DlDl}
D_{\lambda}^{s_{0}}\circ D_{\kappa}^{t_{0}}(g,f)=D_{\lambda\kappa}^{t_{0}+s_{0}\kappa^{-1}}(g,f). 
\end{equation}
One can also prove that if $0<\lambda\leq 1$ then 
\begin{equation}\label{scalesmall}
|(g,f)|_{3,\alpha}(p,0)\leq |D^{0}_{\lambda}(g,f)|_{3,\alpha}(p,0)\leq \lambda^{-(1+\alpha)/2}|(g,f)|_{3,\alpha}(p,0), 
\end{equation}
and if $\lambda\geq 1$ then 
\begin{equation}\label{scalebig}
\lambda^{-(1+\alpha)/2}|(g,f)|_{3,\alpha}(p,0)\leq |D^{0}_{\lambda}(g,f)|_{3,\alpha}(p,0)\leq |(g,f)|_{3,\alpha}(p,0). 
\end{equation}

For $Q=(p_{0},t_{0})$, we define 
\[K_{3,\alpha}((g,f),Q):=\inf\left\{\, \sqrt{\lambda}>0\mid |D_{\lambda}^{t_{0}}(g,f)|_{3,\alpha}(p_{0},0)\leq 1 \,\right\}. \]
Then, by \eqref{DlDl}, we have 
\begin{equation}\label{scaleproK}
\begin{aligned}
&K_{3,\alpha}(D_{\kappa}^{t_{0}}(g,f),(p_{0},s_{0}))\\
=&\inf\left\{\, \sqrt{\lambda}>0\mid |D_{\lambda\kappa}^{t_{0}+s_{0}\lambda^{-1}}(g,f)|_{3,\alpha}(p_{0},0)\leq 1 \,\right\}\\
=&\sqrt{\kappa}^{-1}K_{3,\alpha}((g,f),(p_{0},t_{0}+s_{0}\lambda^{-1})). 
\end{aligned}
\end{equation}
On the other hand, we have 
\begin{equation}\label{scaleprodist}
\begin{aligned}
&\mathrm{dist}_{\kappa g}((p_{0},s_{0}), D^{t_{0}}_{\kappa}(V))\\
=&\min\left\{\inf_{q\in U^{c}} d_{\kappa g_i}(p_{0},q), \sqrt{\kappa(b-t_{0})-s_{0}} ,\sqrt{s_{0}-\kappa (a-t_{0})}\right\}\\
=& \sqrt{\kappa}\min\left\{\inf_{q\in U^{c}} d_{g}(p_{0},q), \sqrt{b-(t_{0}+s_{0}\kappa^{-1})} ,\sqrt{(t_{0}+s_{0}\kappa^{-1})-a}\right\}\\
=& \sqrt{\kappa} \mathrm{dist}_g((p_{0},t_{0}+s_{0}\kappa^{-1}),V).
\end{aligned}
\end{equation}
Hence, by putting $D_{\kappa}^{t_{0}}(p,t):=(p,\kappa(t-t_{0}))$, we have 
\begin{equation}\label{invDK}
\begin{aligned}
&\mathrm{dist}_g(Q,V)\cdot K_{3,\alpha}((g,f),Q)\\
=&\mathrm{dist}_{\kappa g}(D_{\kappa}^{t_{0}}(Q), D^{t_{0}}_{\kappa}(V))\cdot K_{3,\alpha}(D_{\kappa}^{t_{0}}(g,f),D_{\kappa}^{t_{0}}(Q)), 
\end{aligned}
\end{equation}
for all $Q\in V$. 
Then, define 
\[K_{3,\alpha;V}(g,f):=\sup_{Q\in V}\biggl(\mathrm{dist}_g(Q,V)\cdot K_{3,\alpha}((g,f),Q)\biggr). \]

Now, we can start the proof of Theorem \ref{epregthm}, 
the $\varepsilon$-regularity theorem. 

\begin{proof}[Proof of Theorem \ref{epregthm}]
If the statement is false, 
then for any sequences $C_{i}\to \infty$ and $\varepsilon_{i}\to 0$ 
there exists a sequences of holomorphic line bundle $L_{i}\to X$, 
line bundle mean curvature flows $h_{i}=\{\,h_{i}(t)\,\}_{t\in[0,T_{i})}$ on $X$ 
so that each $h_{i}(t)$ is a Hermitian metric of $L_{i}$ 
and a nonvanishing holomorphic section $e_{i}\in\Gamma(U',L_{i})$ 
so that $h_{t}$ is graphical on $U'$ for all $t\in[0,T)$ with respect to $e\in\Gamma(U',L)$. 
Put $\phi_{i}:=-\log h_{i}(\bar{e}_{i},e_{i}):U'\times[0,T_{i})\to\mathbb{R}$. 
We can further assume that, by putting $U:=\varphi(B(r)\times B(r'))$ and $V:=U\times [0,T_{i})$, 
$\sup_{V_{i}}|F(h_{i}(t))|\leq A$ and 
\begin{equation}\label{trigger1}
    \bar{\Theta}(h_{i},Q,t)\leq 1+ \varepsilon_{i}
\end{equation}
for all $Q=(p,T')\in U\times(0,T_{i})$ and $t\in(T'-(\mathrm{dist}_{g}(Q,V_{i}))^2,T')\cap(0,T_{i})$, and
\[K_{3,\alpha;V_{i}}(g,\phi_{i})=\sup_{Q\in V_{i}}\biggl(\mathrm{dist}_g(Q,V_{i})\cdot K_{3,\alpha}((g,\phi_{i}),Q)\biggr)>C_{i}. \]
Put $\sqrt{k_{i}}:=K_{3,\alpha;V_{i}}(g,\phi_{i})>C_{i}$. 
Fix a point $\tilde{Q}_{i}=(\tilde{p}_{i},\tilde{T}_{i})\in V_{i}$ so that 
\begin{equation}\label{defOi}
\mathrm{dist}_g(\tilde{Q}_{i},V_{i})\cdot K_{3,\alpha}((g,\phi_{i}),\tilde{Q}_{i})>\frac{\sqrt{k_{i}}}{2}. 
\end{equation}

We do the blow-up argument to get a contradiction. 
Put 
\[\tilde{k}_{i}:=\left\lfloor (K_{3,\alpha}((g,\phi_{i}),\tilde{Q}_{i}))^2\right\rfloor 
\quad\mbox{and}\quad
\nu_{i}:=(K_{3,\alpha}((g,\phi_{i}),\tilde{Q}_{i}))^2-\tilde{k}_{i}
\]
where $\lfloor x\rfloor $ is the biggest integer which does not exceed $x$. 
Thus, $\nu_{i}$ is just the fractional part of 
$(K_{3,\alpha}((g,\phi_{i}),\tilde{Q}_{i}))^2$, 
and it's clear that $0\leq \nu_{i}<1$. 

By the definition of $\mathrm{dist}_{g}(\tilde{Q}_{i},V_{i})$ 
and the assumption which ensures that $U'$ is bounded, 
we see that $\mathrm{dist}_{g}(\tilde{Q}_{i},V_{i})\leq \mathrm{diam}_{g}(U)\leq \mathrm{diam}_{g}(U')<\infty$. 
Then, it is easy to see that 
\[\sqrt{\tilde{k}_{i}}+1\geq K_{3,\alpha}((g,\phi_{i}),\tilde{Q}_{i})\geq \frac{\sqrt{k_{i}}}{2}\times\frac{1}{\mathrm{diam}_{g}(U)}. \]
Since $k_{i}\to \infty$ as $i\to \infty$, we have proved that 
\begin{equation}\label{tildkinf}
\tilde{k}_{i}\to \infty \quad\mbox{as}\quad i\to \infty. 
\end{equation}

Define the rescaled triplets by $((X,g_{i}),\tilde{L}_{i},\tilde{i}):=D_{\tilde{k}_{i}}^{\tilde{T}_{i}}((X,g),L_{i},h_{i})$, explicitly 
\[g_{i}:=\tilde{k}_{i}g \quad \mbox{and} \quad \tilde{h}_{i}(s):=h_{i}^{\otimes\tilde{k}_{i}}(\tilde{T}_{i}+s/\tilde{k}_{i}). \]
Put $S_{i}:=\tilde{k}_{i}(T_{i}-\tilde{T}_{i})>0$, $S'_{i}:=\tilde{k}_{i}\tilde{T}_{i}>0$, 
$I'_{i}:=[-S'_{i},S_{i})$ and $V_{i}':=U\times I'_{i}$ for notational simplicity. 
Then, $\tilde{h}_{i}(s)$ is a Hermitian metric of $L_{i}^{\otimes\tilde{k}_{i}}$ 
defined for $s\in I'_{i}$ 
and by putting $\tilde{\phi}_{i}(s):=-\log\tilde{h}_{i}(s)(\bar{e}_{i}^{\otimes\tilde{k}_{i}},e_{i}^{\otimes\tilde{k}_{i}})$, we have 
\begin{equation}\label{phitildei}
\tilde{\phi}_{i}(s)=\tilde{k}_{i}\phi_{i}\left( \tilde{T}_{i}+s/\tilde{k}_{i} \right). 
\end{equation}
This means that 
\[(g_{i},\tilde{\phi}_{i})=D_{\tilde{k}_{i}}^{\tilde{T}_{i}}(g,\phi_{i})\quad\mbox{and}\quad D_{\tilde{k}_{i}}^{\tilde{T}_{i}}(V_{i})=V_{i}'. \]
Then, by \eqref{scaleproK}, we have 
\begin{equation}\label{normDF}
K_{3,\alpha}((g_{i},\tilde{\phi}_{i}),(\tilde{p}_{i},0))
=\tilde{k}_{i}^{-1/2}K_{3,\alpha}((g,\phi_{i}),\tilde{Q}_{i})
=\sqrt{1+\frac{\nu_{i}}{\tilde{k}_{i}}}. 
\end{equation}

\begin{claim}
For any point $Q'=(p',s')$ in $U\times I'_{i}=V_{i}'$, we have 
 \begin{equation}\label{triDF}
\mathrm{dist}_{g_{i}}((\tilde{p}_{i},0),V'_{i})\leq
 \mathrm{dist}_{g_{i}}(Q',V'_{i})+\left(d_{g_{i}}(\tilde{p}_{i},p')+\sqrt{|s'|}\right). 
 \end{equation}
\end{claim}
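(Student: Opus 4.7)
The plan is to unfold the definition of the parabolic distance to the boundary and then apply the triangle inequality componentwise. Recall that $V'_{i}=U\times I'_{i}$ with $I'_{i}=[-S'_{i},S_{i})$, so by \eqref{distance}
\[
\mathrm{dist}_{g_{i}}(Q',V'_{i})=\min\left\{\inf_{q\in U^{c}}d_{g_{i}}(p',q),\ \sqrt{S_{i}-s'},\ \sqrt{s'+S'_{i}}\right\}
\]
for $Q'=(p',s')\in V'_{i}$, and likewise
\[
\mathrm{dist}_{g_{i}}((\tilde p_{i},0),V'_{i})=\min\left\{\inf_{q\in U^{c}}d_{g_{i}}(\tilde p_{i},q),\ \sqrt{S_{i}},\ \sqrt{S'_{i}}\right\}.
\]
I denote these three quantities by $m_{1},m_{2},m_{3}$ and $m'_{1},m'_{2},m'_{3}$ respectively, and set $\delta:=d_{g_{i}}(\tilde p_{i},p')+\sqrt{|s'|}$.

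First, I would bound $m_{1}\leq m'_{1}+d_{g_{i}}(\tilde p_{i},p')\leq m'_{1}+\delta$ by the ordinary triangle inequality applied inside the infimum over $q\in U^{c}$. Next, for the time terms I would use the elementary inequality $\sqrt{x}\leq \sqrt{x-y}+\sqrt{|y|}$, valid whenever $x\geq 0$ and $x-y\geq 0$: when $y\geq 0$ this is just subadditivity of $\sqrt{\cdot}$ applied to $x=(x-y)+y$, and when $y<0$ we simply have $\sqrt{x}\leq \sqrt{x-y}$. Applying this with $(x,y)=(S_{i},s')$ and $(x,y)=(S'_{i},-s')$ yields
\[
m_{2}=\sqrt{S_{i}}\leq \sqrt{S_{i}-s'}+\sqrt{|s'|}=m'_{2}+\sqrt{|s'|}\leq m'_{2}+\delta,
\]
\[
m_{3}=\sqrt{S'_{i}}\leq \sqrt{s'+S'_{i}}+\sqrt{|s'|}=m'_{3}+\sqrt{|s'|}\leq m'_{3}+\delta.
\]

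Finally, I would invoke the trivial observation that if $a_{k}\leq b_{k}+\delta$ for each $k\in\{1,2,3\}$, then picking $k^{*}$ that attains $\min_{k}b_{k}$ gives $\min_{k}a_{k}\leq a_{k^{*}}\leq b_{k^{*}}+\delta=\min_{k}b_{k}+\delta$. Applied to $a_{k}=m_{k}$, $b_{k}=m'_{k}$, this yields exactly the claim \eqref{triDF}.

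There is essentially no obstacle here; the statement is a direct triangle inequality for the parabolic distance function. The only minor subtlety is tracking the sign of $s'$ (which is allowed to be negative because $I'_{i}$ contains the negative half-interval $[-S'_{i},0)$), but this is handled cleanly by the two-case verification of the subadditivity inequality for $\sqrt{\cdot}$ recorded above.
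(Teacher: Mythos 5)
Your proof is correct and follows essentially the same route as the paper's: the triangle inequality for the spatial infimum, subadditivity of the square root for the time components, and the elementary compatibility of minima with componentwise bounds. Your two-case treatment of the sign of $s'$ just makes explicit what the paper leaves to the reader.
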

\noindent\textit{\proofname.}
It is easy to see that 
\[\inf_{q\in V^{'c}_i} d_{g_i}(\tilde{p}_{i},q)
\leq \inf_{q\in V^{'c}_i} d_{g_i}(p',q)+d_{g_i}(\tilde{p}_{i},p'). \]
Hence, it is enough to prove 
\[\min\left\{\sqrt{S_{i}}, \sqrt{S'_{i}}\right\}
\leq \min\left\{ \sqrt{S_i-s'},\sqrt{S'_{i}+s'}\right\}+\sqrt{|s'|}. \]
But, this follows from an elementary inequality $\sqrt{a+b}\leq \sqrt{a}+\sqrt{b}$ for $a,b\geq 0$. Then, the proof of this claim is complete. 
\hspace{\fill}\qedsymbol\vspace{3mm}
 
By \eqref{invDK}, the definition of $k_{i}$ and \eqref{defOi}, with a relation $D_{\tilde{k}_{i}}^{\tilde{T}_{i}}(Q)=Q'$, we have 
\begin{equation}\label{invDF2}
\begin{aligned}
&\mathrm{dist}_{g_{i}}(Q',V_{i}')\cdot K_{3,\alpha}((g_{i},\tilde{\phi}_{i}),Q')\\
=&\mathrm{dist}_{g}(Q,V_{i})\cdot K_{3,\alpha}((g,\phi_{i}),Q)\\
\leq & \sqrt{k_{i}}\\
<& 2\mathrm{dist}_g(\tilde{Q}_{i},V_{i})\cdot K_{3,\alpha}((g,\phi_{i}),\tilde{Q}_{i}), 
\end{aligned}
\end{equation}
for all $Q'=(p',s')$ in $U\times I'_{i}=V_{i}'$. 
By the first equality of \eqref{invDF2} with $Q':=(\tilde{p}_{i},0)$, we have 
\begin{equation}\label{invDF3}
\begin{aligned}
&\mathrm{dist}_{g_{i}}((\tilde{p}_{i},0),V_{i}')\cdot K_{3,\alpha}((g_{i},\tilde{\phi}_{i}),(\tilde{p}_{i},0))\\
=&\mathrm{dist}_{g}(\tilde{Q}_{i},V_{i})\cdot K_{3,\alpha}((g,\phi_{i}),\tilde{Q}_{i}). 
\end{aligned}
\end{equation}
Combining \eqref{invDF2}, \eqref{invDF3} and \eqref{normDF} implies that
\[
\mathrm{dist}_{g_{i}}(Q',V_{i}')\cdot K_{3,\alpha}((g_{i},\tilde{\phi}_{i}),Q')\leq 2\sqrt{1+\frac{\nu_{i}}{\tilde{k}_{i}}}\mathrm{dist}_{g_{i}}((\tilde{p}_{i},0),V'_{i}) 
\]
for all $Q'=(p',s')$ in $U\times I'_{i}=V_{i}'$. 
Dividing both hand side by $\mathrm{dist}_{g_{i}}(Q',V'_{i})$ and using \eqref{triDF} yield that 
\begin{equation}\label{estDDF2}
K_{3,\alpha}((g_{i},\tilde{\phi}_{i}),Q')\leq 2\sqrt{1+\frac{\nu_{i}}{\tilde{k}_{i}}}
\left(1-\frac{\left(d_{g_{i}}(\tilde{p}_{i},p')+\sqrt{|s'|}\right)}{\mathrm{dist}_{g_{i}}((\tilde{p}_{i},0),V'_{i})}\right)^{-1}
\end{equation}
for all $Q'=(p',s')$ in $U\times I'_{i}=V_{i}'$ 
whenever the right hand side is positive. 
Combining \eqref{invDF2}, \eqref{invDF3} and \eqref{normDF} also implies 
\[
\frac{1}{2}\sqrt{k_{i}}<\sqrt{1+\frac{\nu_{i}}{\tilde{k}_{i}}}\mathrm{dist}_{g_{i}}((\tilde{p}_{i},0),V'_{i}). 
\]
Since $1\leq 1+\frac{\nu_{i}}{\tilde{k}_{i}}<2$ and $k_{i}\to \infty$ as $i\to\infty$, we see that 
\begin{equation}\label{infdisout}
\mathrm{dist}_{g_{i}}((\tilde{p}_{i},0),V'_{i})\to \infty\quad\mbox{as}\quad i\to\infty. 
\end{equation}
Especially, we have 
\begin{equation}\label{infinf}
\inf_{q\in U_{i}^{c}}d_{g_{i}}(\tilde{p}_{i},q)\to \infty,\quad -S'_{i}\to-\infty \quad\mbox{and}\quad S_{i}\to\infty\quad\mbox{as}\quad i\to\infty.
\end{equation}

Now we have biholomorphic maps $\varphi: B(4r)\times B(r')\to U'$. 
Define $x_{i}\in B(r)$ and $y_{i}\in B(r')$ so that 
$\varphi(x_{i},y_{i})=\tilde{p}_{i}$. 
Fix $R>0$ and 
consider a map $\tilde{\varphi}_{i}:B(R)\times B(r')\to U$ defined by 
\begin{equation}\label{defoftilphi}
\tilde{\varphi}_{i}(x,y):=\varphi\left(x_{i}+\tilde{k}_{i}^{-1/2}x,y_{i}+\tilde{k}_{i}^{-1/2}y\right). 
\end{equation}
We remark that $\tilde{\varphi}_{i}$ is locally biholomorphic and $\tilde{\varphi}_{i}(0,0)=\tilde{p}_{i}$. 

\begin{claim}\label{Claim3}
For any $R>0$, there exists $N>0$ such that $x_{i}+\tilde{k}_{i}^{-1/2}x\in B(r)$ and $y_{i}+\tilde{k}_{i}^{-1/2}y\in B(r')$ for all $i>N$ and $(x,y)\in B(R)\times B(r')$. 
\end{claim}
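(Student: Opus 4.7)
The plan is to reformulate the two inclusions as quantitative conditions on $(x_i,y_i)$ and then invoke the divergence \eqref{infinf}. By the triangle inequality, both containments $x_i+\tilde{k}_i^{-1/2}x\in B(r)$ and $y_i+\tilde{k}_i^{-1/2}y\in B(r')$ will hold for every $(x,y)\in B(R)\times B(r')$ provided
\[
r-|x_i|>\tilde{k}_i^{-1/2}R \qquad\text{and}\qquad r'-|y_i|>\tilde{k}_i^{-1/2}r'.
\]
It therefore suffices to prove $\sqrt{\tilde{k}_i}\,\min(r-|x_i|,\,r'-|y_i|)\to\infty$ as $i\to\infty$.

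Since \eqref{infinf} already gives $\sqrt{\tilde{k}_i}\,d_g(\tilde{p}_i,U^c)\to\infty$, the claim is reduced to a Lipschitz-type comparison
\[
d_g(\tilde{p}_i,U^c)\leq C\,\min\bigl(r-|x_i|,\,r'-|y_i|\bigr)
\]
for a constant $C$ independent of $i$. The semi-flat condition \eqref{lsfc} makes the coefficients $g_{\bar{k}j}$ functions of $x$ alone, smooth on $B(4r)$ and hence uniformly bounded on the compact set $\overline{B(r)}$; pulling back through $\varphi$ therefore majorises $g$ by a constant multiple of the Euclidean metric on $\overline{B(r)}\times B(r')$. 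Taking $\varphi$-images of the straight segments from $(x_i,y_i)$ to $(rx_i/|x_i|,y_i)$ and to points $(x_i,s_k y_i/|y_i|)$ with $s_k\nearrow r'$ produces curves in $U$ of $g$-length at most $\sqrt{C}\,(r-|x_i|)$ and $\sqrt{C}\,(r'-|y_i|)$ respectively; the first terminates on $\partial U\subset U^c$, and the endpoints of the second accumulate in $\overline{U}\setminus U\subset U^c$. This yields the required comparison.

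The expected obstacle is the $y$-direction, since $\varphi$ is only defined on the open ball $B(r')$ in that factor, so the terminal point of the second segment is not literally in the domain. One handles this by using boundedness of $U'$ to ensure that $\varphi(x_i,s_k y_i/|y_i|)$ has an accumulation point in the relatively compact set $\overline{U'}$ as $s_k\nearrow r'$; any such limit must lie outside $U$, for otherwise its $\varphi$-preimage would have $y$-coordinate on $\partial B(r')$, violating $\varphi^{-1}(U)\subset B(r)\times B(r')$. This places the accumulation point in $U^c$ and legitimises the length bound, completing the reduction.
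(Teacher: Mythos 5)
Your proof is correct, and for the $x$-direction it is essentially the paper's argument: both reduce the claim to showing $\sqrt{\tilde{k}_{i}}\,(r-|x_{i}|)\to\infty$, and both obtain this from \eqref{infinf} together with a comparison of the form $\inf_{q\in U^{c}}d_{g}(\tilde{p}_{i},q)\leq \beta\,(r-|x_{i}|)$ coming from the uniform bound $\varphi^{*}g\leq\beta^{2}g_{\mathrm{st}}$ on $\overline{B(r)}\times B(r')$ (the paper's \eqref{distdist}, established via the segment from $(x_{i},y_{i})$ to a nearest point of $\partial B(r)$).

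Where you genuinely differ is the $y$-direction. The paper disposes of it in one sentence (``the condition does not depend on $R$, so we can assume it already holds''), which as written does not exclude the possibility that $|y_{i}|\to r'$ faster than $\tilde{k}_{i}^{-1/2}r'\to 0$; the condition actually needed is $\sqrt{\tilde{k}_{i}}\,(r'-|y_{i}|)\to\infty$. Your argument supplies exactly this, by running the same length comparison along the segments $s\mapsto\varphi(x_{i},s\,y_{i}/|y_{i}|)$ and locating an accumulation point of the endpoints in $\overline{U}\setminus U\subset U^{c}$; your resolution of the obstacle that $\varphi$ is not defined at $|y|=r'$ (continuity of $\varphi^{-1}$ on $U$ forces any limit inside $U$ to have $y$-coordinate of norm $r'$, a contradiction) is sound. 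The only caveats are trivial: the degenerate cases $x_{i}=0$ or $y_{i}=0$ require choosing an arbitrary direction, and the existence of an accumulation point uses relative compactness of the bounded set $U'$, which the paper itself invokes elsewhere (e.g.\ in Claim \ref{Claim3.5}).
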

\noindent\textit{\proofname.}
Since $\tilde{k}_{i}^{-1/2}\to 0$ and the condition so that 
$y_{i}+\tilde{k}_{i}^{-1/2}y\in B(r')$ for all $i>N$ and $y\in B(r')$ does not depends on  $R$, 
we can assume that $y_{i}+\tilde{k}_{i}^{-1/2}y\in B(r')$ already. 
Assume that there exists $\beta>0$ such that the following inequality holds for for all $i$: 
\begin{equation}\label{distdist}
\beta^{-1}\inf_{q\in U^{c}}d_{g_{i}}(\tilde{p}_{i},q)\leq \tilde{k}_{i}^{1/2}\inf_{x'\in B(r)^{c}}d_{\mathbb{R}^{n}}(x_{i},x'). 
\end{equation}
Then, the left hand side tends to $\infty$ when $i\to \infty$ by \eqref{infinf}
and the right hand side is just $\tilde{k}_{i}^{1/2}(r-|x_{i}|)$. 
Thus, for any $R$ there exits $N>0$ such that $R<\tilde{k}_{i}^{1/2}(r-|x_{i}|)$ for all $i>N$. 
Then, for any $x\in B(R)$, we have $|x_{i}+\tilde{k}_{i}^{-1/2}x|\leq |x_{i}|+\tilde{k}_{i}^{-1/2}R<r$ and this is the desired conclusion. 
Thus, it is enough to prove \eqref{distdist}. 
 
To prove \eqref{distdist}, 
fix a point $x'\in \partial B(r)$ such that 
\[d_{\mathbb{R}^{n}}(x_{i},x')=\inf_{x'\in B(r)^{c}}d_{\mathbb{R}^{n}}(x_{i},x'). \] 
Put $p':=\varphi(x',y_{i})\in U$ and let $\beta>0$ be a constant so that 
$\varphi_{i}^{*}g\leq \beta^{2}g_{\mathrm{st}}$ on $\varphi(\overline{B(r)\times B(r')})$, 
where $g_{\mathrm{st}}:=dx^2+dy^2$
Then, since $\tilde{p}_{i}=\varphi(x_{i},y_{i})$ and $g_{i}=\tilde{k}_{i}g$, 
we have 
\begin{equation}\label{estofdists}
\begin{aligned}
d_{g_{i}}(\tilde{p}_{i},p')
=&\tilde{k}_{i}^{1/2}d_{g}(\tilde{p}_{i},p')
\leq \tilde{k}_{i}^{1/2}d_{\varphi_{i}^{*}g}((x_{i},y_{i}),(x',y_{i}))\\
\leq & \beta\tilde{k}_{i}^{1/2}d_{g_{\mathrm{st}}}((x_{i},y_{i}),(x',y_{i}))
=\beta\tilde{k}_{i}^{1/2}d_{\mathbb{R}^{n}}(x_{i},x'), 
\end{aligned}
\end{equation}
and the proof of this claim is complete. 
\hspace{\fill}\qedsymbol\vspace{3mm}

Fix a radius $0<R<\infty$. 
Then, we know that, by Claim \ref{Claim3}, 
there exists $N\in\mathbb{N}$ such that 
$\tilde{\varphi}_{i}:B(R)\times B(r')\to U_{i}$ is defined 
for all $i\geq N$. 
Furthermore, by \eqref{infinf}, 
we can also assume that $(-R,R)\subset [-S_{i}',S_{i})$. 
Put a K\"ahler metric on $B(R)\times B(r')$ by 
\[G_{i}:=\tilde{\varphi}_{i}^{*}g_{i}. \]
Moreover, for each $s\in (-R,R)$, put 
\[\hat{h}_{i}(s):=\exp\left(-\tilde{\varphi}_{i}^{*}\tilde{\phi}_{i}(s)\right), \]
where $\tilde{\phi}_{i}(s)$ is defined by \eqref{phitildei}. 
Then, $\hat{h}_{i}(s)$ is a positive function on $B(R)\times B(r')$ and 
it can be regarded as a Hermitian metric on the trivial $\mathbb{C}$-bundle 
over $B(R)\times B(r')$. 

Since $\tilde{h}_{i}$ is a line bundle mean curvature flow on $(U_{i},g_{i})$, 
$\tilde{\phi}_{i}(=-\log \tilde{h}_{i})$ satisfies the line bundle mean curvature flow equation. 
Here note that actually the line bundle mean curvature flow equation is a PDE for $\tilde{\phi}_{i}$. 
Then, since we just defined $-\log \hat{h}_{i}(=\tilde{\varphi}_{i}^{*}\tilde{\phi}_{i})$ and $G_{i}$ as the pull back of $\tilde{\phi}_{i}$ and $g_{i}$, 
it is clear that $\tilde{\varphi}_{i}^{*}\tilde{\phi}_{i}$ satisfies the line bundle mean curvature flow equation with respect to the K\"ahler metric $G_{i}$. 
This means that $\hat{h}_{i}:=\{\,\hat{h}_{i}(s)\,\}_{s\in(-R,R)}$ 
is a line bundle mean curvature flow 
on $B(R)\times B(r')$ 
with respect to the K\"ahler metric $G_{i}$. 

\begin{claim}\label{Claim3.5}
There exists a subsequence, we still denote it by $i$, such that 
the K\"ahler metrics $G_{i}$ converge to 
a smooth K\"ahler metric $G_{\infty}$ on 
$\mathbb{R}^{n}\times B(r')$ in $C^{\infty}$-sense on each compact subset. 
Moreover, when we write the associated K\"ahler form of $G_{\infty}$ 
by $(\sqrt{-1}/2)G_{\bar{k}j}dz^{j}\wedge d\bar{z}^{k}$, 
then $G_{\bar{k}j}$ are constants satisfying 
$G_{\bar{k}j}=G_{\bar{j}k}$. 
\end{claim}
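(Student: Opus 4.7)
The plan is to write $\tilde{\varphi}_i$ in complex coordinates, pull back the K\"ahler form explicitly, use the semi-flat property to eliminate the $y$-dependence, and then exploit the shrinking factor $\tilde{k}_i^{-1/2}$ from \eqref{defoftilphi} together with subsequence extraction to obtain smooth convergence.

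First, I would observe that in terms of the complex coordinates $z^k = x^k + \sqrt{-1}\, y^k$ on both source and target, the map $\tilde{\varphi}_i$ of \eqref{defoftilphi} is the affine map $z \mapsto z_i + \tilde{k}_i^{-1/2} z$ with $z_i^k := x_i^k + \sqrt{-1}\, y_i^k$. Consequently $\tilde{\varphi}_i^* dZ^j = \tilde{k}_i^{-1/2}\, dz^j$ and similarly for $d\bar Z^k$, so the pull-back of $\omega = (\sqrt{-1}/2)\, g_{\bar k j}(Z)\, dZ^j \wedge d\bar Z^k$ picks up an overall factor $\tilde{k}_i^{-1}$, which cancels exactly against the rescaling $g_i = \tilde{k}_i g$. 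Combining this with the semi-flat condition \eqref{lsfc}, which forces $g_{\bar k j}$ to depend only on $x$ on $U'$, the components of $G_i$ in the coordinates $(z^1,\dots,z^n)$ on $B(R) \times B(r')$ are simply
\[(G_i)_{\bar k j}(z) \;=\; g_{\bar k j}\!\bigl(x_i + \tilde{k}_i^{-1/2}\, x\bigr).\]

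Next, since $\{x_i\}_{i} \subset B(r)$ is bounded, I would pass to a subsequence so that $x_i \to x_\infty$ for some $x_\infty \in \overline{B(r)} \subset B(4r)$. For any compact set $K \subset \mathbb{R}^n \times B(r')$, Claim \ref{Claim3} guarantees that the arguments $x_i + \tilde{k}_i^{-1/2}\, x$ all lie in a fixed compact subset $K' \subset B(4r)$ for $i$ sufficiently large, so every derivative of $g_{\bar k j}$ is bounded uniformly on $K'$. The chain rule then yields, for each multi-index $\alpha$,
\[\bigl|\partial^{\alpha} (G_i)_{\bar k j}(z)\bigr| \;=\; \tilde{k}_i^{-|\alpha|/2}\,\bigl|(\partial^{\alpha} g_{\bar k j})\bigl(x_i + \tilde{k}_i^{-1/2}\, x\bigr)\bigr|,\]
which tends to $0$ uniformly on $K$ when $|\alpha| \geq 1$, while for $|\alpha| = 0$ it converges uniformly on $K$ to the constant $g_{\bar k j}(x_\infty)$. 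Setting $G_{\bar k j} := g_{\bar k j}(x_\infty)$, this gives $C^{\infty}$-convergence of $G_i$ on compact subsets to the tensor with constant components $G_{\bar k j}$.

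Finally, positive-definiteness of $G_\infty$ is inherited from that of $g$ at $x_\infty$; the symmetry $G_{\bar k j} = G_{\bar j k}$ follows immediately from the pointwise identity of the same name in \eqref{lsfc}; and $G_\infty$ is automatically K\"ahler since its coefficients are constant in holomorphic coordinates. There is no genuine analytic difficulty here: the only point requiring care is to track the cancellation between the rescaling $g_i = \tilde{k}_i g$ and the Jacobian $\tilde{k}_i^{-1/2}$ of $\tilde{\varphi}_i$ cleanly in complex (rather than real) coordinates, and to verify via \eqref{lsfc} that the $y$-contributions disappear so that the Taylor remainder in $\tilde{k}_i^{-1/2}$ really governs all of the derivatives. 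Once these bookkeeping points are in place, the conclusion follows from standard compactness for smooth functions on compact sets.
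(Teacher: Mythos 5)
Your proposal is correct and follows essentially the same route as the paper: pass to a subsequence with $\tilde p_i\to\tilde p_\infty$ (equivalently $x_i\to x_\infty$), note that the semi-flat condition makes $(G_i)_{\bar kj}(z)=g_{\bar kj}(x_i+\tilde k_i^{-1/2}x)$ after the $\tilde k_i$-rescaling cancels the Jacobian factor $\tilde k_i^{-1}$ of the affine map $\tilde\varphi_i$, and conclude $C^\infty$-convergence on compacta to the constant symmetric tensor $g_{\bar kj}(x_\infty)$ since each derivative carries a factor $\tilde k_i^{-|\alpha|/2}\to 0$. The paper's own proof is a one-line version of exactly this bookkeeping, so your write-up simply supplies the details it leaves implicit.
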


\noindent\textit{\proofname.}
Since $\tilde{p}_{i}$ is in $U$ and $\varphi(\overline{B(r)\times B(r')})$ is compact and 
contained in $U'=\varphi(B(4r)\times \overline{B(r')})$, 
there exists a point $\tilde{p}_{\infty}\in U'$ and a subsequence, we still denote it by $i$, such that 
$\tilde{p}_{i}\to \tilde{p}_{\infty}$ as $i\to \infty$. 
Then, by the definition of $G_{i}$, semi-flat assumption and 
the fact that $\tilde{k}_{i}\to \infty$ by \eqref{tildkinf}, 
the claim is proved. 
In addition, we see that $G_{\bar{k}j}=g_{\bar{k}j}(\tilde{p}_{\infty})$. 
\hspace{\fill}\qedsymbol\vspace{3mm}

\begin{claim}\label{Claim4}
Put $f_{i}:=\tilde{\varphi}_{i}^{*}\tilde{\phi}_{i}$. 
Then, there exist $M(R)\in\mathbb{N}$ and $C=C(R,A)>0$ such that 
\begin{equation}\label{C0bound'}
\begin{aligned}
&\sup_{Q\in W_{R}}
\left(|\partial_{s}f|+|\nabla^2 f_{i}|+|\partial_{s}\nabla f_{i}|+|\nabla^3 f_{i}|\right)(Q)\\
&+\sup_{\substack{Q_{1},Q_{2}\in W_{R}\\ Q_{1}\neq Q_{2}}}
\frac{|\partial_{s}\nabla f_{i}(Q_{1})-\partial_{s}\nabla f_{i}(Q_{2})|}{\mathrm{dist}_{G_{i}}(Q_{1},Q_{2})^{\alpha}}\\
&+\sup_{\substack{Q_{1},Q_{2}\in W_{R}\\ Q_{1}\neq Q_{2}}}
\frac{|\nabla^{3} f_{i}(Q_{1})-\nabla^{3} f_{i}(Q_{2})|}{\mathrm{dist}_{G_{i}}(Q_{1},Q_{2})^{\alpha}}
\leq C
\end{aligned}
\end{equation}
for all $i\geq M(R)$, 
where $W_{R}:=(B(R)\times B(r'))\times (-R,R)$. 
\end{claim}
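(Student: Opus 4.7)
The plan is to combine the $K_{3,\alpha}$ estimate \eqref{estDDF2} with the divergence \eqref{infdisout} of $\mathrm{dist}_{g_{i}}((\tilde{p}_{i},0),V'_{i})$ to obtain a uniform pointwise bound on $|(g_{i},\tilde{\phi}_{i})|_{3,\alpha}$ over the image $\tilde{\varphi}_{i}(W_{R})$, and then to pull this back by the local isometry $\tilde{\varphi}_{i}$. Since $|\nabla^{2}f_{i}|$ does not appear in $|(g_{i},\tilde{\phi}_{i})|_{3,\alpha}$, I will treat that term separately, using the curvature hypothesis $\sup|F(h_{i}(t))|\leq A$ together with the graphical condition.

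For any $Q'=(p',s')=(\tilde{\varphi}_{i}(x,y),s')$ with $(x,y)\in B(R)\times B(r')$ and $|s'|<R$, Claim \ref{Claim3.5} forces $d_{g_{i}}(\tilde{p}_{i},p')=d_{G_{i}}((0,0),(x,y))$ to stay bounded uniformly in $i$, while $\sqrt{|s'|}\leq \sqrt{R}$ is fixed and $\mathrm{dist}_{g_{i}}((\tilde{p}_{i},0),V'_{i})\to\infty$ by \eqref{infdisout}. Thus the right-hand side of \eqref{estDDF2} tends to $2$ uniformly in such $Q'$, so there is some $M(R)$ with $K_{3,\alpha}((g_{i},\tilde{\phi}_{i}),Q')\leq 3$ for all $i\geq M(R)$. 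Since the infimum defining $K_{3,\alpha}$ is attained on the discrete set $\{\sqrt{1},\sqrt{2},\ldots\}$, there exists $\lambda\in\mathbb{N}$ with $\lambda\leq 9$ and $|D_{\lambda}^{s'}(g_{i},\tilde{\phi}_{i})|_{3,\alpha}(p',0)\leq 1$, and \eqref{scalebig} (combined with a trivial time-shift) upgrades this to $|(g_{i},\tilde{\phi}_{i})|_{3,\alpha}(p',s')\leq 9^{(1+\alpha)/2}=:C_{0}$.

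Now pull back by $\tilde{\varphi}_{i}$, which is a local isometry from $(B(R)\times B(r'),G_{i})$ onto its image because $\tilde{\varphi}_{i}^{*}g_{i}=G_{i}$ by construction. All covariant tensor norms and parabolic distances appearing in the first, third, fourth, and fifth rows of \eqref{C0bound'} are thereby preserved, so the pointwise bounds on $|\partial_{s}f_{i}|$, $|\partial_{s}\nabla f_{i}|$, $|\nabla^{3}f_{i}|$ on $W_{R}$ follow at once. For the two Hölder seminorms over $W_{R}$, I would split pairs $(Q_{1},Q_{2})\in W_{R}$ by time separation. Pairs with $|s_{1}-s_{2}|<2$ lie in a common pulled-back $\bar{g}$-unit ball for $i$ large, since the spatial pulled-back radius grows like $\tilde{k}_{i}^{1/2}\to\infty$ and so eventually dwarfs $R$; the local Hölder control inside $|(g_{i},\tilde{\phi}_{i})|_{3,\alpha}(Q^{*})\leq C_{0}$ applies. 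Pairs with $|s_{1}-s_{2}|\geq 2$ satisfy $\mathrm{dist}_{G_{i}}(Q_{1},Q_{2})^{\alpha}\geq 2^{\alpha/2}$, so their Hölder ratio is absorbed into $2C_{0}/2^{\alpha/2}$.

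The bound on $|\nabla^{2}f_{i}|$ is the main obstacle and requires a separate argument. The identities $F(h^{\otimes k})=kF(h)$ and $|\,\cdot\,|_{kg}=k^{-1}|\,\cdot\,|_{g}$ for a $(1,1)$-form combine to give $|F(\tilde{h}_{i}(s))|_{g_{i}}=|F(h_{i}(\tilde{T}_{i}+s/\tilde{k}_{i}))|_{g}\leq A$. The graphical condition $\partial\tilde{\phi}_{i}/\partial y^{k}=0$ then forces every second complex partial of $\tilde{\phi}_{i}$, including the $(2,0)$- and $(0,2)$-parts, to equal $\tfrac{1}{4}\partial_{x^{i}}\partial_{x^{j}}\tilde{\phi}_{i}=F_{\bar{j}i}(\tilde{h}_{i})$, so in $g_{i}$-normal coordinates $|\nabla^{2}\tilde{\phi}_{i}|_{g_{i}}\leq C(A)$, and pullback yields $|\nabla^{2}f_{i}|_{G_{i}}\leq C(A)$ on $W_{R}$. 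Without the graphical hypothesis the pure holomorphic second derivatives would be entirely uncontrolled by $F$; here the hypothesis is precisely what renders those components equal to components of $F$ and therefore bounded.
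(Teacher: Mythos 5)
Your proposal is correct and follows essentially the same route as the paper's proof: bound $K_{3,\alpha}((g_i,\tilde\phi_i),Q')$ by a fixed constant via \eqref{estDDF2} together with \eqref{infdisout} and the distance estimate $d_{g_i}(\tilde p_i,p')\leq\beta\sqrt{R^2+(r')^2}$, convert this to a bound on the unscaled seminorm (the paper fixes $\lambda=9$ and uses uniform equivalence of $9g_i$ and $g_i$ where you invoke \eqref{scalebig}), recover $|\nabla^2 f_i|$ separately from $|F(\tilde h_i)|\leq A$ and the graphical identity $F_{\bar kj}=\tfrac14\partial_{x^k}\partial_{x^j}\tilde\phi_i$, and pull everything back by the local isometry $\tilde\varphi_i$. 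Your explicit splitting of H\"older pairs by time separation to pass from unit-ball estimates to all of $W_R$ is a detail the paper leaves implicit, but it is not a different method.
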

 
\noindent\textit{\proofname.}
Fix a space-time point $Q=((x,y),s)\in W_{R}$. 
Put $p:=\tilde{\varphi}_{i}(x,y)$ and $Q':=(p,s)$. 
Then, by \eqref{estDDF2}, we see that 
\begin{equation}\label{estDDF22'}
K_{3,\alpha}((g_{i},\tilde{\phi}_{i}),Q')\leq 2\sqrt{1+\frac{\nu_{i}}{\tilde{k}_{i}}}
 \left(
 1-\frac{\left(d_{g_{i}}(\tilde{p}_{i},p)+\sqrt{|s|}\right)}{\mathrm{dist}_{g_{i}}((\tilde{p}_{i},0),V'_{i})}\right)^{-1}. 
 \end{equation}
First, we have $\sqrt{|s|}\leq R$ since $s\in (-R,R)$. 
Next, it follows that 
\[d_{g_{i}}(\tilde{p}_{i},p)\leq\beta\sqrt{R^2+(r')^2}. \]
This is seen as follows. By the definition of $\tilde{\varphi}_{i}$, 
that is, \eqref{defoftilphi}, we have 
$p=\tilde{\varphi}_{i}(x,y)=\varphi(x_{i}+\tilde{k}_{i}^{-1/2}x,y_{i}+\tilde{k}_{i}^{-1/2}y)$ and 
we also have $\tilde{p}_{i}=\varphi(x_{i},y_{i})$. 
Then, by the same argument as \eqref{estofdists}, we get 
\[
d_{g_{i}}(\tilde{p}_{i},p)
\leq \beta\tilde{k}_{i}^{1/2}d_{g_{\mathrm{st}}}((x_{i}+\tilde{k}_{i}^{-1/2}x,y_{i}+\tilde{k}_{i}^{-1/2}y),(x_{i},y_{i})). 
\]
Then, the proof is complete since $x\in B(R)$ and $y\in B(r')$. 

Then, by \eqref{infdisout} and \eqref{tildkinf}, we see that the right hand side of \eqref{estDDF22'} converges to 2 uniformly when $i\to\infty$. 
Especially, there exists $M(R)\in\mathbb{N}$ such that the right hand side of \eqref{estDDF22'} is less than 2.5 for all $i\geq M(R)$. 
Then, by the definition of $K_{3,\alpha}((g_{i},\tilde{\phi}_{i}),Q')$, we have 
\[3\in \{\, \sqrt{\lambda}>0\mid |D_{\lambda}^{s}(g_{i},\tilde{\phi}_{i})|_{3,\alpha}(p,0)\leq 1 \,\}. \]
This implies that $|D_{9}^{s}(g_{i},\tilde{\phi}_{i})|_{3,\alpha}(p,0)\leq 1$ for each $i\geq M(R)$. 
Put $(g, f):=D_{9}^{s}(g_{i},\tilde{\phi}_{i})$ for simplicity. 
Then, we have $g=9g_{i}$ and 
$f(t)=9\tilde{k}_{i}\phi_{i}( \tilde{T}_{i}+s/\tilde{k}_{i}+t/(9\tilde{k}_{i}))$, 
where $t$ is the variable of $f(t)$ and $s$ is fixed. 
Then, for example, we have
\[\frac{\partial}{\partial t}\bigg|_{t=0}\nabla f(t)
=\frac{\partial}{\partial \bar{s}}\bigg|_{\bar{s}=s}\nabla\Bigl(9\tilde{k}_{i}\phi_{i}( \tilde{T}_{i}+\bar{s}/\tilde{k}_{i})\Bigr)\times \frac{1}{9}
=\frac{\partial}{\partial t}\bigg|_{\bar{s}=s}\nabla \tilde{\phi}_{i}(\bar{s}). \]
Next, we consider the set $D_{9}^{s}(V_{i}')$. Then, we have 
\[D_{9}^{s}(V_{i}')=D_{9}^{s}(U\times [-S_{i}',S_{i}))=U\times [9(-S_{i}'-s),9(S_{i}-s)). \]
In particular, this set contains $(p,0)$ since $s\in (-R,R)\subset [-S_{i}',S_{i})$. 
Thus, we see that $(p,0)\in B((p,0))\cap D_{9}^{s}(V_{i}')$.  
By the definition of $|\,\cdot \,|_{3,\alpha}(p,0)$, we have 
\[
\begin{aligned}
|(g,f)|_{3,\alpha}(p,0)=&\sup_{Q''\in B((p,0))\cap D_{9}^{s}(V_{i}')}
\left(|\partial_{t} f|+|\partial_{t}\nabla f|+|\nabla^3 f|\right)(Q'')\\
&+\sup_{\substack{Q_{1},Q_{2}\in B((p,0))\cap D_{9}^{s}(V_{i}')\\ Q_{1}\neq Q_{2}}}
\frac{|\partial_{t}\nabla f(Q_{1})-\partial_{t}\nabla f(Q_{2})|}{\mathrm{dist}_{g}(Q_{1},Q_{2})^{\alpha}}\\
&+\sup_{\substack{Q_{1},Q_{2}\in B((p,0))\cap D_{9}^{s}(V_{i}')\\ Q_{1}\neq Q_{2}}}
\frac{|\nabla^{3} f(Q_{1})-\nabla^{3} f(Q_{2})|}{\mathrm{dist}_{g}(Q_{1},Q_{2})^{\alpha}}, 
\end{aligned}
\]
with respect to the Riemannian metric $g=9g_{i}$. 
Then, since $9g_{i}$ and $g_{i}$ is uniformly equivalent and the value of $f$ on a neighborhood of $(p,0)$ 
corresponds to the one of $\tilde{\phi}_{i}$ on a neighborhood of $(p,s)$, 
we can say that for each compact set $B \subset V_{i}'$ there exists $C(B)>0$ such that 
\begin{equation}\label{preestimate}
\begin{aligned}
&\sup_{Q''\in B}
\left(|\partial_{s} \tilde{\phi}_{i}|+|\partial_{s}\nabla \tilde{\phi}_{i}|+|\nabla^3 \tilde{\phi}_{i}|\right)(Q'')\\
&+\sup_{\substack{Q_{1},Q_{2}\in B\\ Q_{1}\neq Q_{2}}}
\frac{|\partial_{s}\nabla \tilde{\phi}_{i}(Q_{1})-\partial_{s}\nabla \tilde{\phi}_{i}(Q_{2})|}{\mathrm{dist}_{g_{i}}(Q_{1},Q_{2})^{\alpha}}\\
&+\sup_{\substack{Q_{1},Q_{2}\in B\\ Q_{1}\neq Q_{2}}}
\frac{|\nabla^{3} \tilde{\phi}_{i}(Q_{1})-\nabla^{3} \tilde{\phi}_{i}(Q_{2})|}{\mathrm{dist}_{g_{i}}(Q_{1},Q_{2})^{\alpha}}
\leq C(B)
\end{aligned}
\end{equation}
for all $i\geq M(R)$. 
On the other hand, by the scaling invariance of the quantity $|F|$ and the assumption $|F(h_{i})|\leq A$, we have $|F(\tilde{h}_{i})|\leq A$. 
Since $|F(\tilde{h}_{i})|=|\frac{1}{4}(\frac{\partial^2\tilde{\phi}_{i}}{\partial x^{k}\partial x^{\ell}})_{k\ell}|$, 
adding this term to the left hand side of \eqref{preestimate} implies that
\[
\begin{aligned}
&\sup_{Q''\in B}
\left(|\partial_{s} \tilde{\phi}_{i}|+|\nabla^2\tilde{\phi}_{i}|+|\partial_{s}\nabla \tilde{\phi}_{i}|+|\nabla^3 \tilde{\phi}_{i}|\right)(Q'')\\
&+\sup_{\substack{Q_{1},Q_{2}\in B\\ Q_{1}\neq Q_{2}}}
\frac{|\partial_{s}\nabla \tilde{\phi}_{i}(Q_{1})-\partial_{s}\nabla \tilde{\phi}_{i}(Q_{2})|}{\mathrm{dist}_{g_{i}}(Q_{1},Q_{2})^{\alpha}}\\
&+\sup_{\substack{Q_{1},Q_{2}\in B\\ Q_{1}\neq Q_{2}}}
\frac{|\nabla^{3} \tilde{\phi}_{i}(Q_{1})-\nabla^{3} \tilde{\phi}_{i}(Q_{2})|}{\mathrm{dist}_{g_{i}}(Q_{1},Q_{2})^{\alpha}}
\leq C(B,A)
\end{aligned}
\]
for all $i\geq M(R)$. 
Finally, since what we want to estimate is $f_{i}=\tilde{\varphi}_{i}^{*}\tilde{\phi}_{i}$ with respect to $G_{i}=\tilde{\varphi}_{i}^{*}g_{i}$, 
the same estimates hold by replacing $\tilde{\phi}_{i}$ with $f_{i}$ 
and $g_{i}$ with $G_{i}$. 
Then, the proof is complete. 
\hspace{\fill}\qedsymbol\vspace{3mm}

Put $w_{i}:=A_{(O,0)}f_{i}$, see \eqref{tildephi}. Explicitly, 
\[w_{i}((x,y),t):=f_{i}((x,y),t)-\left(f_{i}((0,0),0)+\sum_{i=1}^{n}\frac{\partial f_{i}}{\partial x^{j}}((0,0),0)x^{j}\right).\]
Then, we have $w_{i}((0,0),0)=0$ and $\frac{\partial w_{i}}{\partial x^{k}}((0,0),0)=0$. 
Since the difference between $f_{i}$ and $w_{i}$ is affine linear 
with respect to $x$-coordinates, 
$w_{i}$ also satisfies the same uniform estimate as in \eqref{C0bound'}. 
With this fact and the normalization $w_{i}((0,0),0)=\frac{\partial w_{i}}{\partial x^{k}}((0,0),0)=0$, 
we can say that there exist $M(R)\in\mathbb{N}$ and $C(R)>0$ such that 
\begin{equation}\label{locunibound}
|w_{i}|_{C^{3,\alpha}(W_{R})}\leq C(R)
\end{equation}
for all $i\geq M(R)$, where $|\,\cdot\,|_{C^{3,\alpha}(W_{R})}$ is the standard parabolic $C^{3,\alpha}$-norm on $W_{R}$. 

\begin{claim}\label{Claim5}
There exists a subsequence, we still denote it by $i$, such that functions $w_{i}$ converge to 
a smooth function $w_{\infty}$ defined 
on $(\mathbb{R}^{n}\times B(r'))\times(-\infty,\infty)$ 
in $C^{\infty}$-sense on each compact subset. 
Moreover, $w_{\infty}(s)$ is independent of the second component of $\mathbb{R}^{n}\times B(r')$ for 
all $s\in(-\infty,\infty)$. 
\end{claim}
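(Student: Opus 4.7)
The plan is first to extract a limit from \eqref{locunibound} by a diagonal Arzela--Ascoli argument and then to upgrade the convergence from $C^{3,\alpha'}$ to $C^\infty$ by bootstrapping the parabolic line bundle mean curvature flow equation satisfied by $w_i$.

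First, I combine the normalization $w_i((0,0),0)=0$ and $(\partial w_i/\partial x^k)((0,0),0)=0$ with the uniform bounds on $\partial_s w_i$ and $\nabla^2 w_i$ implicit in \eqref{locunibound} to promote \eqref{locunibound} into a full uniform $C^{3,\alpha}$ bound on $w_i$ over $W_R$ by integrating from the origin (both in time, using $\partial_s w_i$, and in space, using $\nabla w_i$ and $\nabla^2 w_i$). The compact embedding $C^{3,\alpha}(W_R) \hookrightarrow C^{3,\alpha'}(W_R)$ for any $\alpha'<\alpha$ together with a diagonal extraction over $R=1,2,3,\dots$ then produces a subsequence converging in $C^{3,\alpha'}$ on every compact subset of $(\mathbb{R}^n\times B(r'))\times\mathbb{R}$ to some limit $w_\infty$.

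Next, I upgrade this to $C^\infty$ convergence by a parabolic bootstrap. Each $f_i$, and hence $w_i$, satisfies the line bundle mean curvature flow equation $\partial_s w_i = \theta(\nabla^2 w_i) - \hat{\theta}_i$ with respect to the K\"ahler metric $G_i$, which by Claim \ref{Claim3.5} converges smoothly to a constant metric $G_\infty$. The uniform $C^{3,\alpha}$ bound on $w_i$ combined with $|F(h_i)|\le A$ keeps the eigenvalues of the endomorphism $K_i$ uniformly bounded, so the PDE stays uniformly parabolic. Differentiating it once in a spatial direction yields a linear parabolic equation for each $\partial_{x^k} w_i$ whose coefficients depend smoothly on $\nabla^2 w_i$ and $G_i$ and are therefore bounded in $C^{1,\alpha}$ uniformly on compact sets. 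Interior parabolic Schauder estimates then give uniform $C^{3,\alpha}$ bounds on $\partial_{x^k} w_i$, hence uniform $C^{4,\alpha}$ bounds on $w_i$, on compact subsets; iterating yields uniform $C^{k,\alpha}$ bounds for all $k$, and a final diagonal extraction gives $C^\infty$ convergence on compacta.

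Finally, the $y$-independence of $w_\infty$ is immediate from the graphical condition: since $h_i$ is graphical on $U'$ with respect to $e_i$ and $\tilde{\varphi}_i$ is a product of translations in the $x$- and $y$-coordinates, $f_i$ is independent of $y$, and the affine correction by which $w_i$ differs from $f_i$ is also $y$-independent. Thus $\partial_{y^k} w_i=0$ for every $i$, and this property passes to the limit $w_\infty$. The main obstacle is the bootstrap step, where at each stage one must verify that the linearized PDE is uniformly parabolic with H\"older coefficients; the bound $|F(h_i)|\le A$ and the inductively improving regularity of $w_i$ make this verification possible, but its careful execution is the technical heart of the proof.
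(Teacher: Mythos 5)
Your proposal is correct and follows essentially the same route as the paper: both start from the uniform bound \eqref{locunibound}, observe that the spatial derivative $\partial_{j}w_{i}$ satisfies the parabolic equation $\partial_{s}(\partial_{j}w_{i})=\Delta_{\tilde{\eta}(i)}(\partial_{j}w_{i})$ obtained by differentiating the flow equation, bootstrap via interior Schauder estimates (using that the coefficients $\tilde{\eta}(i)^{p\bar{q}}$ are controlled by \eqref{locunibound} and uniformly elliptic thanks to $\eta\geq g$ and $|F|\leq A$) to obtain all higher-order bounds, and conclude by Arzel\`a--Ascoli with a diagonal extraction, the $y$-independence passing to the limit since each $w_{i}$ is $y$-independent.
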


\noindent\textit{\proofname.}
Let $R_{i}$ be a sequence such that $R_{i}\to\infty$ as $i\to \infty$. 
First, we work on $W_{R_{k}}=(B(R_{k})\times B(r'))\times (-R_{k},R_{k})$ for fixed $k$. 
By the definition of the line bundle mean curvature flow, 
we have 
\begin{equation}\label{pdeforpp}
\frac{\partial}{\partial s}(\partial_{j}\tilde{\phi_{i}})=H_j(\tilde{h}_{i})
=\tilde{\eta}(i)^{p\bar{q}}\nabla_p \tilde{F}_{\bar{q}j}(i)
=\tilde{\eta}(i)^{p\bar{q}}\nabla_p\partial_{\bar{q}}(\partial_j\tilde{\phi_i})=\Delta_{\tilde{\eta}(i)}(\partial_{j}\tilde{\phi_{i}}), 
\end{equation}
where $\tilde{F}(i)$ and $\tilde{\eta}(i)$ are defined by $\tilde{h}_{i}$ and $\nabla$ is the Levi-Civita connection of $g_{i}$. 
Since $f_i$ and $G_{i}$ are the pull back of $\tilde{\phi_i}$ and $g_{i}$ by $\tilde{\varphi}_{i}$, 
$\partial_{j}w_{i}$ also satisfies the same equation as \eqref{pdeforpp}. 

Then, by the following argument, we can get the higher derivatives of $w_i$.
Since $\tilde{\eta}^{p\bar{q}}(i)$ is the combination of the second derivatives of $w_i$, the first derivatives of the coefficients of $\Delta_{\tilde{\eta}(i)}$ and its $\alpha$-H\"older norm are uniformly bounded on each compact set by \eqref{locunibound}. 
Taking the derivatives of \eqref{pdeforpp}, then $\partial^2 w_i$ satisfies the following equation: 
\[\frac{\partial}{\partial s}(\partial^2 w_i)=\Delta_\eta(\partial^2 w_i)+\partial\tilde{\eta}(i)\ast \partial^3 w_i, \]
where $A\ast B$ is a term which can be written as 
linear combinations of 
some products of components of $A$ and $B$. 
Since the last term of the above equation is uniformly bounded in $C^\alpha$ by \eqref{locunibound}, 
by the Schauder estimate we see that 
$|\partial^2 w_i|_{C^{2,\alpha}(W_{R_{k}})}\leq C'(R_{k})$ 
for some $C'(R_{k})>0$. 
We can continue to do the bootstrap argument in this fashion 
and get all higher order bounds for $w_i$. 
Thus, from the standard Arzela-Ascoli theorem, 
we can get a subsequence which converges to a smooth function on $W_{R_{k}}$. 
Of course, this limit function inherits the graphical condition, that is, 
it does not depend on $y$. 
Finally, by using the usual diagonal argument with $R_{i}\to\infty$, 
we prove this claim. 
\hspace{\fill}\qedsymbol\vspace{3mm}

\begin{claim}
For any compact set $K\times[a,b]$ in $(\mathbb{R}^{n}\times B(r'))\times\mathbb{R}$ including $(O,0)$ 
there exists $M(K)\in\mathbb{N}$ such that for all $i\geq M(K)$
 \begin{equation}\label{keytocontra1}
 \begin{aligned}
& \sup_{Q'\in K\times[a,b]}\left(|\partial_{s}w_{i}|+|\partial_{s}\nabla w_{i}|+|\nabla^3 w_{i}|\right)(Q')\\
&+\sup_{\substack{Q_{1},Q_{2}\in K\times[a,b]\\ Q_{1}\neq Q_{2}}}\frac{|\partial_{s}\nabla w_{i}(Q_{1})-\partial_{s}\nabla w_{i}(Q_{2})|}{\mathrm{dist}_{G_{i}}(Q_{1},Q_{2})^{\alpha}}\\
&+\sup_{\substack{Q_{1},Q_{2}\in K\times[a,b]\\ Q_{1}\neq Q_{2}}}\frac{|\nabla^{3} w_{i}(Q_{1})-\nabla^{3} w_{i}(Q_{2})|}{\mathrm{dist}_{G_{i}}(Q_{1},Q_{2})^{\alpha}}\geq 1. 
\end{aligned}
\end{equation}
\end{claim}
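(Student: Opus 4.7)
The plan is to transfer the normalization \eqref{normDF}, which gives $K_{3,\alpha}((g_i,\tilde{\phi}_i),(\tilde{p}_i,0))=\sqrt{1+\nu_i/\tilde{k}_i}\geq 1$, through the biholomorphic chart $\tilde{\varphi}_i$ and the affine correction $f_i\mapsto w_i$ to a lower bound for the seminorms of $w_i$ with respect to $G_i$ on the compact set $K\times[a,b]$.

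First I would verify that $K_{3,\alpha}((G_i,w_i),((0,0),0))\geq 1$. Since $\tilde{\varphi}_i$ is a biholomorphism with $G_i=\tilde{\varphi}_i^*g_i$, $f_i=\tilde{\varphi}_i^*\tilde{\phi}_i$, and $\tilde{\varphi}_i(0,0)=\tilde{p}_i$, the pullback is isometric for every seminorm entering $|\cdot|_{3,\alpha}$ and commutes with each scaling operator $D_\lambda^0$, so $K_{3,\alpha}((G_i,f_i),((0,0),0))=K_{3,\alpha}((g_i,\tilde{\phi}_i),(\tilde{p}_i,0))$. Because $w_i-f_i$ is affine-linear in $x$ and independent of $s$, the quantities $\partial_s$, $\partial_s\nabla$, and $\nabla^3$ of $w_i$ and $f_i$ coincide, giving $K_{3,\alpha}((G_i,w_i),((0,0),0))\geq 1$. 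I would then convert this to the pointwise lower bound $|(G_i,w_i)|_{3,\alpha}((0,0),0)\geq 1$: by the definition of $K_{3,\alpha}$ as an infimum, every $\lambda\in(0,1)$ satisfies $|D_\lambda^0(G_i,w_i)|_{3,\alpha}((0,0),0)>1$, and combining with the right-hand inequality in \eqref{scalesmall} and sending $\lambda\to 1^-$ yields the bound.

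Finally, I would compare the local norm $|(G_i,w_i)|_{3,\alpha}((0,0),0)$ to the global sup on $K\times[a,b]$ appearing in \eqref{keytocontra1}. The former is a sum of suprema of $|\partial_s w_i|$, $|\partial_s\nabla w_i|$, $|\nabla^3 w_i|$, and the Hölder quotients of $\partial_s\nabla w_i$ and $\nabla^3 w_i$, taken over $B((0,0))\cap\mathrm{dom}(w_i)$ in the rescaled model. The goal is to show that for $i$ sufficiently large (depending on $K$) this localization set is contained in $K\times[a,b]$, so that the sup on $K\times[a,b]$ dominates and the inequality $\geq 1$ follows.

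The main obstacle is precisely this last containment. The reference ball $B((\tilde p_i,0))$, inherited from a fixed $\bar g$ on $X$, pulls back via $\tilde{\varphi}_i$ (which contracts $g$-distances by $\tilde{k}_i^{-1/2}$) and interacts with the rescaling from $g$ to $g_i=\tilde k_i g$; a careful bookkeeping of these factors is needed to show that $\tilde{\varphi}_i^{-1}(B((\tilde p_i,0)))$ has uniformly bounded extent in model coordinates and is eventually absorbed by any fixed compact $K\times[a,b]$ containing $(O,0)$ once $i\geq M(K)$.
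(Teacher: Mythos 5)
Your first two steps are sound and essentially reproduce the paper's own reduction: the paper likewise converts the normalization \eqref{normDF} into the pointwise bound $|(g_{i},\tilde{\phi}_{i})|_{3,\alpha}(\tilde{p}_{i},0)\geq 1$ (it argues by contradiction using \eqref{scalesmall} and \eqref{scalebig}, whereas you argue directly from the right-hand inequality of \eqref{scalesmall}; the two are equivalent), and passing from $f_{i}$ to $w_{i}$ is harmless because the correction is affine in $x$ and time-independent.

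The gap is in your final localization step, and it is not mere bookkeeping: the scaling goes the wrong way. You assert that $\tilde{\varphi}_{i}^{-1}\bigl(B((\tilde{p}_{i},0))\bigr)$ ``has uniformly bounded extent in model coordinates.'' It does not: by \eqref{defoftilphi} the map $\tilde{\varphi}_{i}$ contracts model coordinates by the factor $\tilde{k}_{i}^{-1/2}$, so the preimage of the fixed unit $\bar{g}$-ball $B((\tilde{p}_{i},0))$ has spatial extent of order $\tilde{k}_{i}^{1/2}\to\infty$ in the $x$-variables. Hence the set over which the inequality $|(g_{i},\tilde{\phi}_{i})|_{3,\alpha}(\tilde{p}_{i},0)\geq 1$ takes its suprema is, in the model picture, an exhausting sequence of sets rather than a shrinking one, and it is never absorbed by a fixed compact $K\times[a,b]$. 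Since the suprema could a priori be attained at model distance of order $\tilde{k}_{i}^{1/2}$ from the origin, the containment you need (pulled-back ball contained in $K\times[a,b]$) fails, and without it the lower bound on the supremum over $K\times[a,b]$ does not follow. This is precisely the delicate point of the claim: the paper disposes of it by invoking the opposite inclusion $\tilde{\varphi}_{i}(K)\times[a,b]\subset B((\tilde{p}_{i},0))\cap V_{i}'$, which holds for large $i$ but by itself only bounds the supremum over $K\times[a,b]$ from above by the supremum over the ball. Closing the argument requires additional input---for instance an almost-maximality or point-picking property of $\tilde{Q}_{i}$ ensuring the seminorms are not concentrated far from $\tilde{p}_{i}$, or a localization ball whose radius scales with the metric---and your proposal as written does not supply it.
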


\noindent\textit{\proofname.}
First, by \eqref{normDF}, we have 
\begin{equation}\label{claim6-1}
\inf\{\, \sqrt{\lambda}>0\mid |D_{\lambda}^{0}(g_{i},\tilde{\phi}_{i})|_{3,\alpha}(\tilde{p}_{i},0)\leq 1 \,\}\geq 1. 
\end{equation}
Then, we can prove that 
\begin{equation}\label{claim6-2}
|(g_{i},\tilde{\phi}_{i})|_{3,\alpha}(\tilde{p}_{i},0)\geq 1       
\end{equation}
as follow. Assume that $|(g_{i},\tilde{\phi}_{i})|_{3,\alpha}(\tilde{p}_{i},0)=:\nu<1$. 
Then, by \eqref{scalebig}, we have 
\[|D^{0}_{\lambda}(g_{i},\tilde{\phi}_{i})|_{3,\alpha}(\tilde{p}_{i},0)\leq |(g_{i},\tilde{\phi}_{i})|_{3,\alpha}(\tilde{p}_{i},0)<1\]
for $\lambda\geq 1$. 
Similarly, by \eqref{scalesmall}, we have 
\[|D^{0}_{\lambda}(g_{i},\tilde{\phi}_{i})|_{3,\alpha}(\tilde{p}_{i},0)\leq \lambda^{-(1+\alpha)/2}|(g_{i},\tilde{\phi}_{i})|_{3,\alpha}(\tilde{p}_{i},0)\leq 1\]
for $\nu^{2/(1+\alpha)}\leq \lambda\leq 1$. 
This implies that 
\[\inf\{\, \sqrt{\lambda}>0\mid |D_{\lambda}^{0}(g_{i},\tilde{\phi}_{i})|_{3,\alpha}(\tilde{p}_{i},0)\leq 1 \,\}\leq \nu^{1/(\alpha+1)}<1. \]
However, this contradicts to \eqref{claim6-1}. 
Thus, \eqref{claim6-2} holds. 
Then, by the definition of $|(g_{i},\tilde{\phi}_{i})|_{3,\alpha}(\tilde{p}_{i},0)$, we have 
\[
\begin{aligned}
&\sup_{Q'\in B((\tilde{p}_{i},0))\cap V_{i}'}\left(|\partial_{s} \tilde{\phi}_{i}|+|\partial_{s}\nabla \tilde{\phi}_{i}|+|\nabla^3 \tilde{\phi}_{i}|\right)(Q')\\
&+\sup_{\substack{Q_{1},Q_{2}\in B((\tilde{p}_{i},0))\cap V_{i}'\\Q_{1}\neq Q_{2}}}\frac{|\partial_{s}\nabla \tilde{\phi}_{i}(Q_{1})-\partial_{s}\nabla \tilde{\phi}_{i}(Q_{2})|}{\mathrm{dist}_{g}(Q_{1},Q_{2})^{\alpha}}\\
&+\sup_{\substack{Q_{1},Q_{2}\in B((\tilde{p}_{i},0))\cap V_{i}'\\ Q_{1}\neq Q_{2}}}\frac{|\nabla^{3} \tilde{\phi}_{i}(Q_{1})-\nabla^{3} \tilde{\phi}_{i}(Q_{2})|}{\mathrm{dist}_{g}(Q_{1},Q_{2})^{\alpha}}\geq 1. 
\end{aligned}
\]
One can easily see that for any compact set 
$K\times[a,b]$ in $(\mathbb{R}^{n}\times B(r'))\times\mathbb{R}$ including $(O,0)$ 
there exists $M(K)\in\mathbb{N}$ such that $\tilde{\varphi}_{i}(K)\times[a,b]\subset B((\tilde{p}_{i},0))\cap V_{i}'$ 
for all $i\geq M(K)$. 
Then, since $f_{i}$ and $G_{i}$ are the pull back of $\tilde{\phi_i}$ and $g_{i}$ by $\tilde{\varphi}_{i}$ 
and the difference between $f_{i}$ and $w_{i}$ is affine linear 
with respect to $x$-coordinates, we get \eqref{keytocontra1}. 
This completes the proof of this claim. 
\hspace{\fill}\qedsymbol\vspace{3mm}

\begin{claim}\label{Claim7}
$w_{\infty}$ is a quadratic function for all $s\in\mathbb{R}$. 
More precisely, there exist $A=(a_{ij})\in\mathrm{Sym}(n)$ such that $w_{\infty}(x,s)=a_{ij}x^{i}x^{j}$. 
\end{claim}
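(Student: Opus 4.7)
My plan is to carry out the blow-up/contradiction strategy outlined in Subsection \ref{subsec1.4}, using the monotonicity formula (Corollary \ref{impcor}) and the Liouville type theorem for self-shrinkers (Theorem \ref{tosayquadra}) applied to the limit flow $\hat{h}_\infty:=\exp(-w_\infty)$ on $(\mathbb{R}^n\times B(r'),G_\infty)$.

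First I would transfer the Gaussian density hypothesis from the original flows to the limit. The assumption of Theorem \ref{epregthm} gives $\bar{\Theta}(h_i,Q,t)\leq 1+\varepsilon_i$ on a suitable parabolic neighbourhood of $\tilde{Q}_i=(\tilde{p}_i,\tilde{T}_i)$. By the scaling invariance of the density (formula \eqref{densityinv2}), the same bound with the same right-hand side holds for the rescaled flow $\tilde h_i=D_{\tilde k_i}^{\tilde T_i}h_i$ evaluated at the rescaled space-time points; pulling back by the biholomorphism $\tilde\varphi_i$ (which is an isometry from $(B(R)\times B(r'),G_i)$ onto its image) and using $A_{(O,0)}f_i=w_i$, I get $\bar{\Theta}_\infty(\hat h_i,(O,0),s)\leq 1+\varepsilon_i$ for every fixed $s<0$, once $i$ is large enough so that the relevant parabolic neighbourhood fits inside the rescaled domain (this is where \eqref{infdisout} is used).

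Next I would pass to the limit. Since $w_i\to w_\infty$ and $G_i\to G_\infty$ in $C^\infty_{\mathrm{loc}}$ (Claims \ref{Claim3.5} and \ref{Claim5}), and the integrand in the definition \eqref{densitynoncomp} of $\bar{\Theta}_\infty$ has Gaussian decay in $|\mathcal{P}|^2$ that is controlled uniformly in $i$ by $G_i\to G_\infty$ (together with \eqref{lowbdofP}), the dominated convergence theorem yields $\bar{\Theta}_\infty(\hat h_\infty,(O,0),s)\leq 1$ for every $s<0$. By construction $A_{(O,0)}w_\infty=w_\infty$, i.e.\ $A_{(O,0)}\hat h_\infty=\hat h_\infty$, so Corollary \ref{impcor} (with $T'=0$, $a$ arbitrary in $(-\infty,0)$) applies to give
\[
\mathcal{H}(\hat h_{\infty,s})=\frac{1}{2s}\,\mathcal{P}^{\bot}(\hat h_{\infty,s})
\quad\text{for all }s\in(-\infty,0).
\]
This is precisely the hypothesis \eqref{timedepself} of the Liouville type Theorem \ref{tosayquadra} (noting that the semi-flat assumption holds for $G_\infty$ by Claim \ref{Claim3.5}, and the graphical assumption holds because $w_\infty$ is $y$-independent). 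Applying that theorem produces a constant $b\in\mathbb{R}$ and a symmetric matrix $(a_{ij})\in M(n,\mathbb{R})$ such that $w_\infty(x,s)=b+a_{ij}x^ix^j$ for all $s\in\mathbb{R}$; the normalization $w_\infty(O,0)=0$ forces $b=0$, which is the desired conclusion.

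The main obstacle I expect is the second step: verifying that the density $\bar{\Theta}_\infty$, which is an integral over the noncompact slice $\mathbb{R}^n\times B(r')$ with Gaussian weight, actually passes to the limit along the sequence $\hat h_i$. The cutoff densities $\bar{\Theta}_j$ used in the paper handle the noncompactness, so I would carry this out by first bounding $\bar{\Theta}_j(\hat h_i,(O,0),s)$ via the scaling-invariant $\bar{\Theta}(h_i,Q_i,t)$ (which is $\leq 1+\varepsilon_i$), then letting $i\to\infty$ with $j$ fixed (using $C^\infty_{\mathrm{loc}}$ convergence), and finally $j\to\infty$ using the uniform $|F(\tilde h_i)|\leq A$ bound and \eqref{lowbdofP} to control the tail. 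Once this density convergence is established, the remaining steps are direct citations of Corollary \ref{impcor} and Theorem \ref{tosayquadra}.
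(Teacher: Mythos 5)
Your proposal is correct and follows essentially the same route as the paper's proof: apply the hypothesis $\bar{\Theta}(h_i,\tilde{Q}_i,t_i)\leq 1+\varepsilon_i$ at the times $t_i=\tilde{T}_i+s/\tilde{k}_i$ (admissible by \eqref{infdisout}), transfer it to the rescaled flows via the scaling invariance \eqref{densityinv2} and the commutation $D_{\tilde{k}_i}^{\tilde{T}_i}A_{\tilde{Q}_i}h_i=A_{Q_i'}\tilde{h}_i$, pass to the limit on compact exhaustions to get $\bar{\Theta}_\infty(H_\infty,O,s)\leq 1$, and then invoke Corollary \ref{impcor} and Theorem \ref{tosayquadra}. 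The only cosmetic difference is that the paper bounds the integral from below over $X(R)=B(R)\times B(r')$ and lets $i\to\infty$ then $R\to\infty$, whereas you phrase the same monotone-convergence argument through the cutoff densities $\bar{\Theta}_j$; both resolve the noncompactness issue you correctly flagged.
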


\noindent\textit{\proofname.}
Put $X_{\infty}:=\mathbb{R}^{n}\times B(r')$ and $H_{\infty}:=e^{-w_{\infty}}$. 
Then, $H_{\infty}$ is a line bundle mean curvature flow of the trivial bundle $\underline{\mathbb{C}}$ over $X_{\infty}$ defined for all $s\in\mathbb{R}$. 
By Claim \ref{Claim5}, $H_{\infty}$ is globally graphical on $\mathbb{R}^{n}$. 

Fix $s\in(-\infty,0)$ and $R>0$. We only consider all $i$ bigger than $N=N(R)$ 
appeared in Claim \ref{Claim3}. 
Put $t_{i}:=\tilde{T}_{i}+s/\tilde{k}_{i}<\tilde{T}_{i}$. 
Then, by \eqref{infdisout}, we see that $(\mathrm{dist}_{g_{i}}((\tilde{p}_{i},0),V_{i}'))^2>-s$ 
for all sufficiently large $i$ with $i>N$. 
Using \eqref{scaleprodist} implies that 
\[\mathrm{dist}_{g_{i}}((\tilde{p}_{i},0),V_{i}')
=\tilde{k}_{i}^{1/2}\mathrm{dist}_{g}(\tilde{Q}_{i},V_{i}). \]
Thus, combining the definition of $t_{i}$, we get $t_{i}>\tilde{T}_{i}-(\mathrm{dist}_{g}(\tilde{Q}_{i},V_{i}))^2$. 
This means that we can use the assumption \eqref{trigger1} for $t=t_{i}$. Then, we have 
$\bar{\Theta}(h_{i},\tilde{Q}_{i},t_{i})\leq 1+\varepsilon_{i}$. 
By definition, we have 
\begin{equation}\label{precom0}
   \bar{\Theta}(h_{i},\tilde{Q}_{i},t_{i})=\frac{(2\sqrt{2})^{n}}{\mathop{\mathrm{Vol}_{g}}(B(r')_{\tilde{p}_{i}})}\Theta(A_{\tilde{Q}_{i}}h_{i},\tilde{Q}_{i},t_{i}).  
\end{equation}
By the scaling invariance of the density \eqref{densityinv2}, for $\tilde{Q}_{i}=(\tilde{p}_{i},\tilde{T}_{i})$, we have 
\begin{equation}\label{precom1}
\begin{aligned}
\frac{(2\sqrt{2})^{n}}{\mathop{\mathrm{Vol}_{g}}(B(r')_{\tilde{p}_{i}})}\Theta(A_{\tilde{Q}_{i}}h_{i},\tilde{Q}_{i},t_{i})=\frac{(2\sqrt{2})^{n}}{\mathop{\mathrm{Vol}_{\tilde{k}_{i}g}}(B(r')_{\tilde{p}_{i}})}\Theta(D_{\tilde{k}_{i}}^{\tilde{T}_{i}}A_{\tilde{Q}_{i}}h_{i},Q_{i}',s), 
\end{aligned}
\end{equation}
where $Q_{i}'=(\tilde{p}_{i},0)$. 
Since $A_{\tilde{Q}_{i}}h_{i}$ is defined by 
\[(A_{\tilde{Q}_{i}}h_{i})_{t}:=e^{-(A_{\tilde{Q}_{i}}\phi_{i})(t)}(\bar{e}_{i}^{*}\otimes e_{i}), \]
we have 
\[(D_{\tilde{k}_{i}}^{\tilde{T}_{i}}A_{\tilde{Q}_{i}}h_{i})_{s}:=e^{-\tilde{k}_{i}(A_{\tilde{Q}_{i}}\phi_{i})(\tilde{T}_{i}+s/\tilde{k}_{i})}(\bar{\xi}_{i}^{*}\otimes \xi_{i}), \]
where $\xi_{i}:=e_{i}^{\otimes \tilde{k}_{i}}$. 
On the other hand, we have 
\[(D_{\tilde{k}_{i}}^{\tilde{T}_{i}}h_{i})_{s}
=e^{-\tilde{k}_{i}\phi_{i}(\tilde{T}_{i}+s/\tilde{k}_{i})}(\bar{\xi}_{i}^{*}\otimes \xi_{i})
=e^{-\tilde{\phi}_{i}(s)}(\bar{\xi}_{i}^{*}\otimes \xi_{i}), \]
where the last equality follows from the definition of $\tilde{\phi}_{i}$. 
From these equality, one can easily see that 
\begin{equation}\label{DAHAH}
(D_{\tilde{k}_{i}}^{\tilde{T}_{i}}A_{\tilde{Q}_{i}}h_{i})_{s}=
(A_{Q_{i}'}D_{\tilde{k}_{i}}^{\tilde{T}_{i}}h_{i})_{s}=(A_{Q_{i}'}\tilde{h}_{i})_{s}
=e^{-(A_{Q_{i}'}\tilde{\phi}_{i})(s)}(\bar{\xi}_{i}^{*}\otimes \xi_{i}), 
\end{equation}
where the last equality follows from the definition of $\tilde{h}_{i}$. 
Then, combining \eqref{precom0}, \eqref{precom1}, \eqref{DAHAH} and 
$\mathop{\mathrm{Vol}_{\tilde{k}_{i}g}}(B(r')_{\tilde{p}_{i}})=\tilde{k}_{i}^{n/2}\mathop{\mathrm{Vol}_{g}}(B(r')_{\tilde{p}_{i}})$ implies that 
\begin{equation}\label{precom2}
\begin{aligned}
\bar{\Theta}(h_{i},\tilde{Q}_{i},t_{i})=\frac{(2\sqrt{2})^{n}}{\tilde{k}_{i}^{n/2}\mathop{\mathrm{Vol}_{g}}(B(r')_{\tilde{p}_{i}})}\Theta(A_{Q_{i}'}\tilde{h}_{i},Q_{i}',s). 
\end{aligned}
\end{equation}
Put $\tilde{w}_{i}:=(A_{Q_{i}'}\tilde{\phi}_{i})(s)$. 
Then, by the definition of $\Theta$, we have 
\[\Theta(A_{Q_{i}'}\tilde{h}_{i},Q_{i}',s)
=\int_{X}\frac{1}{(-4\pi s)^{n/2}}\exp\left(\frac{|\mathcal{P}_{x_{i}}(s)|^2}{4s}\right)
\tilde{f}\left(\frac{4|\mathcal{P}_{x_{i}}(s)|}{\lambda(g_{i}) 4r}\right)d\mu((A_{Q_{i}'}\tilde{h}_{i})_{s}), \]
where $\lambda(g_{i})=\tilde{k}_{i}^{1/2}\lambda(g)$, $d\mu((A_{Q_{i}'}\tilde{h}_{i})_{s})=|\zeta((A_{Q_{i}'}\tilde{h}_{i})_{s})|\omega_{i}^{n}/n!$ and 
\[|\mathcal{P}_{x_{i}}(s)|^2=4(g_{i})_{\bar{p}q}(x^{p}-x_{i}^{p})(x^{q}-x_{i}^{q})+\frac{1}{4}(g_{i})^{p\bar{q}}\frac{\partial \tilde{w}_{i}}{\partial x^{p}} \frac{\partial \tilde{w}_{i}}{\partial x^{q}}. \]
Put $X_{i}(R):=B(R)\times B(\tilde{k}_{i}^{1/2})$ 
and $X(R):=B(R)\times B(r')$. 
Then, by the definition of $\tilde{\varphi}_{i}$, 
we see that $\tilde{\varphi}_{i}$ restricted on $X_{i}(R)$ is bijective onto its image 
and the image is included in $U$. 
Then, we have 
\begin{equation}\label{precom3}
\begin{aligned}
&\int_{X}\frac{1}{(-4\pi s)^{n/2}}\exp\left(\frac{|\mathcal{P}_{x_{i}}(s)|^2}{4s}\right)
\tilde{f}\left(\frac{|\mathcal{P}_{x_{i}}(s)|}{\lambda(g_{i}) r}\right)d\mu((A_{Q_{i}'}\tilde{h}_{i})_{s})\\
\geq & \int_{\tilde{\varphi}_{i}(X_{i}(R))}\frac{1}{(-4\pi s)^{n/2}}\exp\left(\frac{|\mathcal{P}_{x_{i}}(s)|^2}{4s}\right)
\tilde{f}\left(\frac{|\mathcal{P}_{x_{i}}(s)|}{\tilde{k}_{i}^{1/2}\lambda(g)r}\right)d\mu((A_{Q_{i}'}\tilde{h}_{i})_{s})\\
=&\int_{X_{i}(R)}\frac{1}{(-4\pi s)^{n/2}}\exp\left(\frac{\tilde{\varphi}_{i}^{*}|\mathcal{P}_{x_{i}}(s)|^2}{4s}\right)
\tilde{f}\left(\frac{\tilde{\varphi}_{i}^{*}|\mathcal{P}_{x_{i}}(s)|}{\tilde{k}_{i}^{1/2}\lambda(g)r}\right)\tilde{\varphi}_{i}^{*}d\mu((A_{Q_{i}'}\tilde{h}_{i})_{s})\\
=&\tilde{k}_{i}^{n/2}\int_{X(R)}\frac{1}{(-4\pi s)^{n/2}}\exp\left(\frac{\tilde{\varphi}_{i}^{*}|\mathcal{P}_{x_{i}}(s)|^2}{4s}\right)
\tilde{f}\left(\frac{\tilde{\varphi}_{i}^{*}|\mathcal{P}_{x_{i}}(s)|}{\tilde{k}_{i}^{1/2}\lambda(g)r}\right)\tilde{\varphi}_{i}^{*}d\mu((A_{Q_{i}'}\tilde{h}_{i})_{s}), 
\end{aligned}
\end{equation}
where the first inequality simply follows form $\tilde{\varphi}_{i}(X_{i}(R))\subset X$ and 
$\lambda(g_{i})=\tilde{k}_{i}^{1/2}\lambda(g)$, the second equality is just the change of variables and 
the last equality follows from that the integrand does not depend on $y$-variable. 
Thus, combining \eqref{precom2} and \eqref{precom3} implies that 
\begin{equation}\label{precom4}
\begin{aligned}
&\bar{\Theta}(h_{i},\tilde{Q}_{i},t_{i})\geq \frac{(2\sqrt{2})^{n}}{\mathop{\mathrm{Vol}_{g}}(B(r')_{\tilde{p}_{i}})}\int_{X(R)}\frac{1}{(-4\pi s)^{n/2}}\exp\left(\frac{\tilde{\varphi}_{i}^{*}|\mathcal{P}_{x_{i}}(s)|^2}{4s}\right)\\
&\hspace{45mm}\times\tilde{f}\left(\frac{\tilde{\varphi}_{i}^{*}|\mathcal{P}_{x_{i}}(s)|}{\tilde{k}_{i}^{1/2}\lambda(g)r}\right)\tilde{\varphi}_{i}^{*}d\mu((A_{Q_{i}'}\tilde{h}_{i})_{s}), 
\end{aligned}
\end{equation}
where $\tilde{k}_{i}^{n/2}$ canceled out. 
On the other hand, by the straightforward computation, 
we can prove that $\tilde{\varphi}_{i}^{*}\tilde{w}_{i}=w_{i}$, 
where we recall that $w_{i}=A_{(O,0)}f_{i}$ and $f_{i}=\tilde{\varphi}_{i}^{*}\tilde{\phi}_{i}$. 
Then, since $w_{i}$ uniformly converges to $w_{\infty}$ on $X(R)\subset X_{\infty}$ and 
we supposed that $\tilde{p}_{i}$ converges to $\tilde{p}_{\infty}$ in the proof of Claim \ref{Claim3.5}, 
letting $i\to \infty$ in \eqref{precom4} 
with $\bar{\Theta}(h_{i},\tilde{Q}_{i},t_{i})\leq 1+\varepsilon_{i}$ implies that 
\begin{equation}\label{precom5}
\begin{aligned}
1\geq \frac{(2\sqrt{2})^{n}}{\mathop{\mathrm{Vol}_{G_{\infty}}}(B(r')_{O})}\int_{X(R)}\frac{1}{(-4\pi s)^{n/2}}\exp\left(\frac{|\mathcal{P}_{O}(H_{\infty}(s))|^2}{4s}\right)d\mu(H_{\infty}(s)), 
\end{aligned}
\end{equation}
where $d\mu(H_{\infty}(s))$ is the induced measure 
defined by $H_{\infty}(s)$ and the limit metric $G_{\infty}$ appeared in Claim \ref{Claim3.5}. 
To deduce \eqref{precom5}, we also used some facts. 
The first couple of facts is that $\tilde{\varphi}_{i}^{*}|\mathcal{P}_{x_{i}}(s)|$ is uniformly bounded 
(because it uniformly converges to $|\mathcal{P}_{O}(H_{\infty}(s))|$), the cut-off function $\tilde{f}(x)$ is identically 1 for $x\leq 1$ and $\tilde{k}_{i}\to \infty$ when $i\to\infty$. 
These imply that the term $\tilde{f}(\ast)$ in \eqref{precom4} uniformly converges to $1$. 
The second couple of facts is that $\tilde{p}_{i}$ converges to $\tilde{p}_{\infty}$ and 
$G_{\infty}$ is actually the constant metric $g(\tilde{p}_{\infty})$ 
as mentioned in the proof of Claim \ref{Claim3.5}. 
These imply that $\mathop{\mathrm{Vol}_{g}}(B(r')_{\tilde{p}_{i}})$ converges to $\mathop{\mathrm{Vol}_{G_{\infty}}}(B(r')_{O})$. 
Since $s\in(-\infty,0)$ and $R>0$ are arbitrary, we proved that 
\begin{equation}\label{precom6}
\begin{aligned}
1\geq \frac{(2\sqrt{2})^{n}}{\mathop{\mathrm{Vol}_{G_{\infty}}}(B(r')_{O})}\int_{X_{\infty}}\frac{1}{(-4\pi s)^{n/2}}\exp\left(\frac{|\mathcal{P}_{O}(H_{\infty}(s))|^2}{4s}\right)d\mu(H_{\infty}(s))
\end{aligned}
\end{equation}
for all $s\in (-\infty,0)$. 
Since $A_{O}H_{\infty}=H_{\infty}$ by the construction of $H_{\infty}$, 
\eqref{precom6} means that 
\[\bar{\Theta}_{\infty}(H_{\infty},O,s)\leq 1\]
for all $s\in (-\infty,0)$, where $\bar{\Theta}_{\infty}$ is defined in \eqref{densitynoncomp}. 
Then, by Corollary \ref{impcor}, we see that $H_{\infty}$ satisfies 
\[\mathcal{H}=\frac{1}{2s}\mathcal{P}^{\bot}\]
for all $s\in (-\infty,0)$. 
Then, by Theorem \ref{tosayquadra}, 
there exist $b\in \mathbb{R}$ and a symmetric matrix $A=(a_{ij})\in M(n,\mathbb{R})$ such that 
$w_{\infty}=-\log H_{\infty}=b+a_{ij}x^{i}x^{j}$. 
Since $w_{\infty}(O,0)=0$ by the normalization, $b=0$. 
Then, the proof is complete. 
\hspace{\fill}\qedsymbol\vspace{3mm}

By Claim \ref{Claim7}, we see that 
\[
 \begin{aligned}
& \sup_{Q'\in K\times[a,b]}\left(|\partial_{s}w_{\infty}|+|\partial_{s}\nabla w_{\infty}|+|\nabla^3 w_{\infty}|\right)(Q')\\
&+\sup_{\substack{Q_{1},Q_{2}\in K\times[a,b]\\ Q_{1}\neq Q_{2}}}\frac{|\partial_{s}\nabla w_{\infty}(Q_{1})-\partial_{s}\nabla w_{\infty}(Q_{2})|}{\mathrm{dist}_{G_{\infty}}(Q_{1},Q_{2})^{\alpha}}\\
&+\sup_{\substack{Q_{1},Q_{2}\in K\times[a,b]\\ Q_{1}\neq Q_{2}}}\frac{|\nabla^{3} w_{\infty}(Q_{1})-\nabla^{3} w_{\infty}(Q_{2})|}{\mathrm{dist}_{G_{\infty}}(Q_{1},Q_{2})^{\alpha}}=0 
\end{aligned}
\]
for any compact set $K\times[a,b]$ in $X_{\infty}\times\mathbb{R}$. 
However, this contradict to the uniform lower bound \eqref{keytocontra1}. 
Then, the proof is complete. 
\end{proof}

As a corollary of Theorem \ref{epregthm}, 
we give a sufficient condition so that 
a line bundle mean curvature flow defined on a finite time interval $[0,T)$ 
can be extended beyond the time $T$. 
We denote the open right lower triangle of $(0,T)\times (0,T)$ by 
\[D:=\{\,(T',t)\in\mathbb{R}^{2}\mid 0<T'<T,\, 0<t<T'\,\}. \]
Fix a K\"ahler manifold $(X,g)$, a bounded open set $U'\subset X$, $\alpha\in(0,1)$ and $A>0$. 
Assume that $(X,g)$ is semi-flat on $U'$ with respect to $\varphi:B(4r)\times B(r')\to U'$. 
Let $\varepsilon, C>0$ be constants appeared in Theorem \ref{epregthm}. 

\begin{corollary}
Suppose $L\to X$ is a holomorphic line bundle, 
$h=\{\,h_{t}\,\}_{t\in [0,T)}$ is a line bundle mean curvature flow of $L$ with $T<\infty$ 
and $e\in\Gamma(U',L)$ is a nonvanishing holomorphic section 
so that $h_{t}$ is graphical on $U'$ for all $t\in[0,T)$ 
with respect to $e\in\Gamma(U',L)$. 
Put $V:=\varphi(B(r)\times B(r'))\times [0,T)$. 
Further assume that $\sup_{V}|F(h(t))|\leq A$ and 
\[\limsup_{(q,T',t)\to (p,T,T)}\bar{\Theta}(h,(q,T'),t)< 1+\varepsilon, \]
where $(q,T',t)\in X\times D$, then $h$ can be extended beyond $T$ around $p$. 
\end{corollary}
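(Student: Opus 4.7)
The plan is to localize the problem near $(p,T)$ so that Theorem \ref{epregthm} applies directly, and then to convert the resulting scale-invariant $K_{3,\alpha}$-bound into a genuine $C^{2,\alpha}$-limit at time $T$, from which short-time existence produces the desired extension.

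\textbf{Setup and localization.} Because the $\limsup$ is strictly less than $1+\varepsilon$, I can choose $\rho,\delta>0$ small enough that
\[\bar{\Theta}(h,(q,T'),t)\leq 1+\varepsilon\quad\text{whenever}\quad d_{g}(q,p)<\rho,\ T-\delta<t<T'<T.\]
Shrinking $\rho$ if necessary, I pick a semi-flat subchart $\hat{\varphi}\colon B(4R)\times B(R')\to\hat{U}'$, obtained from $\varphi$ by translating in the $x$-variable and shrinking radii, such that $p\in\hat{U}:=\hat{\varphi}(B(R)\times B(R'))$ and $\hat{U}'\subset\{d_{g}(\cdot,p)<\rho\}\cap U'$. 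The graphical condition for $h_{t}$ on $U'$ restricts to $\hat{U}'$. Finally, assuming $\delta<T$, I restart the flow at time $T-\delta$ by setting $\tilde{h}_{s}:=h_{T-\delta+s}$ for $s\in[0,\delta)$; this is again a line bundle mean curvature flow on $L$.

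\textbf{Applying Theorem \ref{epregthm}.} I apply the $\varepsilon$-regularity theorem to the triple $((X,g),L,\tilde{h})$ with the semi-flat chart $\hat{\varphi}$, the constants $\alpha$ and $A$, and $\tilde{V}:=\hat{U}\times[0,\delta)$. The curvature bound $\sup_{\tilde{V}}|F(\tilde{h}_{s})|\leq A$ is immediate from the corresponding bound for $h$. For the density hypothesis, I use that $\bar{\Theta}$ is invariant under time translation: for $Q=(q,T'')\in\hat{U}\times(0,\delta)$ and $t\in(T''-\mathrm{dist}_{g}(Q,\tilde{V})^{2},T'')\cap(0,\delta)$, the parabolic-ball condition forces $t>0$, hence both $T-\delta+T''$ and $T-\delta+t$ lie in $(T-\delta,T)$ while $q\in\hat{U}\subset\{d_{g}(\cdot,p)<\rho\}$, so the chosen neighborhood from the limsup step gives $\bar{\Theta}(\tilde{h},Q,t)=\bar{\Theta}(h,(q,T-\delta+T''),T-\delta+t)\leq 1+\varepsilon$. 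The theorem then yields $K_{3,\alpha;\tilde{V}}(g,\tilde{\phi})\leq C$, where $\tilde{\phi}:=-\log\tilde{h}(\bar{e},e)$.

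\textbf{From $K_{3,\alpha}$ to extension.} Fix a compact $K\subset\hat{U}$. The bound $K_{3,\alpha;\tilde{V}}\leq C$ together with \eqref{scalebig} gives, for $(q,s)\in K\times[\delta/2,\delta)$,
\[|\partial_{s}\tilde{\phi}|+|\partial_{s}\nabla\tilde{\phi}|+|\nabla^{3}\tilde{\phi}|\leq C'(\delta-s)^{-(1+\alpha)/2}.\]
Crucially, since $\alpha\in(0,1)$ the exponent $(1+\alpha)/2$ is strictly less than $1$, so $\partial_{s}\tilde{\phi}$ and $\partial_{s}\nabla\tilde{\phi}$ are integrable in $s$ up to $\delta$. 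Thus $\tilde{\phi}(\cdot,s)$ and $\nabla\tilde{\phi}(\cdot,s)$ converge uniformly on $K$ as $s\to\delta^{-}$ to a limit $\tilde{\phi}_{\delta}\in C^{1}(K)$. The $|F|$-bound yields a uniform $C^{2}$-bound on $\tilde{\phi}(\cdot,s)$; combining this with the H\"older control on $\nabla^{3}\tilde{\phi}$ from $K_{3,\alpha}$ and the parabolic equation $\ddot{\tilde{\phi}}=\Delta_{\eta}\dot{\tilde{\phi}}$ (which has uniformly elliptic $\eta$ from the $|F|$-bound), standard interior parabolic Schauder estimates promote the convergence to $C^{2,\alpha}$ on any interior compact $K'\Subset K$. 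Applying the local short-time existence result for the line bundle mean curvature flow with $C^{2,\alpha}$ initial datum $\tilde{\phi}_{\delta}$ on $K'$ produces an extension past $s=\delta$, i.e.\ past $t=T$, in a neighborhood of $p$.

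\textbf{Main obstacle.} The delicate step is the last one: translating the scale-invariant bound $K_{3,\alpha;\tilde{V}}\leq C$, which deteriorates like $(\delta-s)^{-(1+\alpha)/2}$ near the top time, into a genuine $C^{2,\alpha}$-limit of $\tilde{\phi}(\cdot,s)$ as $s\to\delta^{-}$. This relies decisively on $\alpha<1$ to obtain an integrable blow-up rate; the argument packages the interpolation between the uniform $C^{2}$-bound from $\sup|F|\leq A$ and the $K_{3,\alpha}$-controlled third-order H\"older norm into convergence sufficient to invoke short-time existence.
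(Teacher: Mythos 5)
Your argument follows the same overall strategy as the paper's: use the strict inequality in the $\limsup$ to find a space--time neighborhood of $(p,T)$ on which the density hypothesis of Theorem \ref{epregthm} holds, restrict to a smaller semi-flat chart and a truncated (time-translated) flow so that the theorem applies, conclude $K_{3,\alpha}\leq C$ on the truncated domain, and then upgrade this to regularity at the terminal time so that short-time existence restarts the flow. The localization and the application of the theorem are correct (in particular, the time-translation invariance of $\bar{\Theta}$ you use is legitimate, since the density depends only on $T'-t$ and the metric at time $t$). Where you genuinely diverge from the paper is the last step. The paper simply says that ``the similar argument as in the proofs of Claim \ref{Claim4} and Claim \ref{Claim5}'' bounds all derivatives of $\phi$ near $p$; taken literally this glosses over the fact that, because the parabolic distance \eqref{distance} contains the factor $\sqrt{b-t}$, the bound $K_{3,\alpha;V}\leq C$ only yields $|(g,\phi)|_{3,\alpha}(q,t)\lesssim (T-t)^{-(1+\alpha)/2}$, which degenerates at the terminal time. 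You confront this directly: you extract the blow-up rate via \eqref{scalebig}, observe that $(1+\alpha)/2<1$ makes $|\partial_t\phi|$ and $|\partial_t\nabla\phi|$ integrable in $t$, and hence obtain a genuine $C^1$-limit of $\phi(\cdot,t)$ as $t\to T^-$, which you then interpolate against the uniform $C^2$-bound coming from $\sup|F|\leq A$. This is a more careful (and, I think, more honest) account of where the difficulty lies. That said, your final upgrade from this $C^{1}$-convergence plus $C^{1,1}$-bounds to a $C^{2,\alpha}$ initial datum via ``interior parabolic Schauder'' is still only asserted, not proved: interior Schauder estimates are interior in time as well, so they do not by themselves give uniform H\"older control of $\nabla^2\phi$ up to $t=T$, and you never control $\partial_t\nabla^2\phi$. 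This residual gap is of the same order as the one in the paper's own one-line citation of Claims \ref{Claim4} and \ref{Claim5}, so I regard your proposal as a correct proof at the same level of rigor as the original, with a sharper identification of the key analytic point.
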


\begin{proof}
By the assumption, we know that there is an open neighborhood $U''$ of $p$ and $a\in(0,T)$ such that
\[\bar{\Theta}(h,(q,T'),t)\leq 1+ \varepsilon\]
for all $q\in U'', T'\in (a, T)$ and $t\in (a, T')$. 
Making $U''$ smaller if necessary so that $a<T'-(\mathrm{dist}_{g}(Q,V))$ for all $Q=(q,T')\in U''\times (b,T)$ for some $b\in (a,T)$, 
we can apply Theorem \ref{epregthm} (with truncating the time interval to $[b,T)$). 
Then, we know that
\[K_{3,\alpha;V}(g,\phi)\leq C, \]
where $\phi:=-\log h(\bar{e},e)$. 
Then, by the similar argument as in the proofs of Claim \ref{Claim4} and Claim \ref{Claim5}, 
one can see that all derivatives of $\phi$ is bounded around $p$. 
Thus, the flow can be extended beyond $T$ around $p$.
\end{proof}


\end{document}